\title[Equivalence relations on $\NN$ under computable
reducibility]{The hierarchy of equivalence relations on the natural
  numbers under computable reducibility}
\author{Samuel Coskey}
\address{Samuel Coskey, York University \& The Fields Institute, 222
  College Street, Toronto, ON M5S 2N2, Canada}
\email{scoskey@nylogic.org}
\urladdr{boolesrings.org/scoskey}
\author{Joel David Hamkins}
\address{Joel David Hamkins, Department of Philosophy, New York
  University, 5 Washington Place, New York, NY 10003 \& The Graduate
  Center of the City University of New York, Mathematics Program, 365
  Fifth Avenue, New York, NY 10016, \& College of Staten Island of
  CUNY, 2800 Victory Blvd., Staten Island, NY 10314}
\email{jhamkins@gc.cuny.edu}
\urladdr{jdh.hamkins.org}
\author{Russell Miller}
\address{Russell Miller, The Graduate Center of The City University of
  New York, Mathematics Program, 365 Fifth Avenue, New York, NY 10016
  \& Queens College of CUNY, 65-30 Kissena Blvd., Flushing, NY 11367}
\email{Russell.Miller@qc.cuny.edu}
\urladdr{qcpages.qc.cuny.edu/$\sim$rmiller}
\thanks{The research of the first author is partially supported by the
  Natural Sciences and Engineering Research Council of Canada.  The
  research of the second author has been partially supported by grants
  from the National Science Foundation, the Simons Foundation and the
  CUNY Research Foundation. The research of the third author has been
  partially supported by the Isaac Newton Institute and
  by grants from the National Science Foundation
  and the CUNY Research Foundation.}
\renewcommand{\subset}{\subseteq}
\newcommand{\smin}{{\sf min}}
\newcommand{\smax}{{\sf max}}
\newcommand{\sgcd}{{\sf gcd}}
\newcommand{\slcm}{{\sf lcm}}
\newcommand{\smed}{{\sf med}}
\newcommand{\sset}{{\sf set}}
\newcommand{\sbin}{{\sf bin}}
\newcommand{\stern}{{\sf tern}}
\newcommand{\sgraph}{{\sf graph}}
\newcommand{\slo}{{\sf lo}}
\newcommand{\stree}{{\sf tree}}
\newcommand{\sgroup}{{\sf group}}
\newcommand{\spres}{{\sf pres}}
\newcommand{\la}{\langle}
\newcommand{\ra}{\rangle}
\newcommand{\bfd}{\boldsymbol{d}}
\newcommand{\lcm}{\mathop{\mathrm{lcm}}}
\newcommand{\med}{\mathop{\mathrm{med}}}
\newcommand{\cut}{\mathop{\mathrm{cut}}\nolimits}
\newcommand{\ciso}{\simeq}
\newcommand{\ociso}{\mathord{\simeq}}
\begin{document}
\begin{abstract}
  The notion of computable reducibility between equivalence relations
  on the natural numbers provides a natural computable analogue of
  Borel reducibility.  We investigate the computable reducibility
  hierarchy, comparing and contrasting it with the Borel reducibility
  hierarchy from descriptive set theory.  Meanwhile, the notion of
  computable reducibility appears well suited for an analysis of
  equivalence relations on the c.e.\ sets, and more specifically, on
  various classes of c.e.\ structures.  This is a rich context with
  many natural examples, such as the isomorphism relation on c.e.\
  graphs or on computably presented groups.  Here, our exposition
  extends earlier work in the literature concerning the classification
  of computable structures.  An abundance of open questions remains.
\end{abstract}
\maketitle

\section{Introduction}

In this paper we aim to study the complexity of equivalence relations
on the natural numbers and their hierarchy, under the relation of
computable reducibility. Using a blend of methods from computability
theory and descriptive set theory, we thus carry out a computable
analogue of the theory of Borel equivalence relations. The resulting
theory appears well suited for undertaking an analysis of the
complexity of isomorphism relations and other equivalence relations on
the class of computably enumerable (c.e.) structures, such as the
isomorphism relation on c.e.~graphs or on computably presented groups
and the orbit equivalence relations arising from computable group
actions. The c.e.~analogues of many of the relations playing important
roles in the Borel theory, such as equality, $E_0$ and others, play
similar roles in the computable theory here, and in addition, the
naturally arising equivalence relations from computability theory on
c.e.~sets, such as Turing equivalence, 1-equivalence and
m-equivalence, also fit into the hierarchy. Although in broad strokes
the resulting theory exhibits many of the important and attractive
features of the Borel theory, there are notable differences, such as
the existence of a complex hierarchy of relations strictly below the
equality relation on c.e.~sets, as well as relations incomparable to
equality, and we shall remark on these differences when they arise.

In the subject known as Borel equivalence relations, one studies
arbitrary equivalence relations on standard Borel spaces with respect
to \emph{Borel reducibility}, a notion introduced in \cite{friedman}.
Here, if $E,F$ are equivalence relations on $X,Y$, then we say that
$E$ is Borel reducible to $F$, written $E\leq_BF$, if there is a Borel
function $f\from X\into Y$ such that
\[x\mathrel{E}x'\iff f(x)\mathrel{F}f(x')\;.
\]
This notion is particularly meaningful when $E$ or $F$ represent a
naturally arising classification problem in mathematics, for it allows
us to compare the difficulty of such classification problems in a
precise and robust manner. For instance, the isomorphism relations on
the spaces of countable groups, graphs, fields, and linear orders, the
isometry relation on separable Banach spaces, and the conjugacy
relation on measure-preserving transformations can all be compared
with respect to Borel reducibility.  In these cases, we interpret
$E\leq_BF$ as saying that the classification problem for elements of
$X$ up to $E$ is \emph{no harder than} the classification problem for
elements of $Y$ up to $F$.  Indeed, the reduction function $f$ yields
a classification of the elements of $X$ up to $E$ using invariants
from $Y/F$.  For a fantastic introduction to the subject and its
motivations we recommend the text \cite{gao}, and we will often cite
it for background material.

Motivated by the desire to imitate this field of research in the
computable setting, we work with the following notion of
reducibility for equivalence relations on the natural numbers.

\begin{defn}
  \label{def:main}
  Let $E,F$ be equivalence relations on $\NN$.  We say that $E$ is
  \emph{computably reducible} (or just \emph{reducible}) to $F$,
  written $E\leq F$, if there exists a computable function
  $f\from\NN\into\NN$ such that
  \[n\mathrel{E}n'\iff f(n)\mathrel{F}f(n')
  \]
  In other words, the equivalence classes $\NN/F$ form a set of
  effectively computable invariants for the classification problem up
  to $E$.
\end{defn}

As evidence that this is a very natural definition, we remark that a
substantial number of other authors have arrived at this notion from
other directions.  (Our own investigation of computable reducibility
began independently from this prior work as well.)  Without intending
to give a completely history, we now briefly discuss this notion's
travels through the literature.  It seems that computable reducibility
first appeared, at least in the English language, in \cite{sorbi},
where it is defined only for c.e.\ equivalence relations.  The authors
of that paper note that the notion was in use prior to their work, for
instance in \cite{ershov}.  The theory of c.e.\ equivalence relations
with respect to computable reducibility was later expanded in
\cite{gerdes} and then \cite{sorbi:ceers}.  In an independent line of
study, Knight and coauthors considered a number of
computability-theoretic notions of reducibility between various
natural classes of structures (see the series of papers
\cite{knight:comparison,knight:classification,knight:embeddings}).
The notion of computable reducibility again arose in
\cite{fokina:computable} and \cite{knight:iso}, where Fokina, Friedman
and others used it to compare classes of computable structures.  In
\cite{fokina:sigma}, the authors used the notion to compare classes of
hyperfinite structures, as well as arbitrary equivalence relations.

Meanwhile, the number of names given to the reducibility of
Definition~\ref{def:main} is almost as large as the number of papers
about it, and our name continues this tradition.  The original name of
$m$-reducibility, given by Bernardi and Sorbi and kept by Gao and
Gerdes, reflects the analogy to $m$-reductions in computability theory
(see e.g. \cite{soare}), but its use can be confusing, since one
equivalence relation can be $m$-reducible to another in the sense of
\cite{sorbi} yet not in the sense of \cite{soare}, or vice versa.
Knight and her coauthors actually used two distinct names for the same
concept, since they viewed these as relations on classes of
structures, not as reducibilities on equivalence relations.  Finally,
the notion is called FF-reducibility in
\cite{fokina:computable,knight:iso,fokina:sigma}.  We believe that
\emph{computable reducibility} serves our purposes best, since we are
motivated by the analogy to Borel reducibility on equivalence
relations.  Borel reductions from $E$ to $F$ are Borel maps $f$
satisfying $xEy\iff f(x)Ff(y)$, and our reductions are
Turing-computable maps $f$ with the same property.  (We would gladly
have called it ``Turing reducibility,'' were that name not already in
use.)  Likewise, it would be natural to study $\bfd$-computable
reducibility, using $\bfd$-computable functions $f$, for arbitrary
Turing degrees $\bfd$, or to study $\mathcal{C}$-reducibility for
other classes $\mathcal{C}$ of functions.

In this paper, we shall apply Definition~\ref{def:main} to the study
of several varieties of equivalence relations.  The simplest of these
are the partitions of the natural numbers which are of a
number-theoretic or combinatorial nature.  Here, the notion of
computable reducibility can be seen as a degree-theoretic structure on
the equivalence relations which properly generalizes the classical
Turing reducibilities.  The connection with degree theory is more than
just an analogy; for instance, we will observe in
Proposition~\ref{prop:manyone} that the $1$-reducibility ordering on
the c.e.\ $1$-degrees embeds into the computable reducibility ordering
on the equivalence relations with two classes.

Another connection with degree theory is the role of the arithmetical
hierarchy.  Here our theory departs from the classical Borel theory in
a significant way.  For instance, in the Borel theory almost all
interesting equivalence relations are either Borel or $\Sigma^1_1$. As
a consequence, equivalence relations cannot typically be distinguished
up to Borel bireducibility just on the basis of their position in the
projective hierarchy.  In this paper, we shall consider equivalence
relations on various levels of the arithmetic hierarchy, and this will
give us a convenient and powerful tool for establishing
nonreducibility.

A second, more substantial variety of equivalence relations, to which
we shall devote most of our attention, are those arising from
relations on the collection of computably enumerable (c.e.)\ subsets
of $\NN$. The goal is to study the hierarchy of equivalence relations
arising naturally in this realm---including isomorphism relations on
natural classes of c.e.~structures, such as groups, graphs and
rings---in a manner analogous to the Borel theory. Since the c.e.~sets
and structures have a canonical enumeration $\{W_e\}_{e\in\NN}$ from
computability theory, every equivalence relation on the c.e.~sets
arises from a corresponding equivalence relation on the indices for
those sets, in effect using the program index $e$ to stand in for the
set $W_e$ that it enumerates. Specifically, when $E$ and $F$ are
equivalence relations defined on the c.e.\ sets, we can say that
$E\leq F$ if and only if there is a computable function $f$ such that
for all indices $e,e'$,
\[W_e\mathrel{E}W_{e'}\iff W_{f(e)}\mathrel{F}W_{f(e')}\;.
\]
Thus, $E\leq F$ in this sense if and only if $E^{ce}\leq F^{ce}$ in
the sense of Definition~\ref{def:main}, where $E^{ce}$ denotes the
relation on $\NN$ defined by $e\mathrel{E}^{ce}e'\iff
W_e\mathrel{E}W_{e'}$.

In this context, computable reducibility is more closely analogous
with the Borel theory.  For instance, many of the classically studied
equivalence relations can be fruitfully restricted to just the c.e.\
sets. Moreover, many of the Borel reductions between these relations
turn out to be computable in our sense, yielding a familiar hierarchy
of equivalence relations on c.e.\ sets.  As we shall see, there is
even an analogue of the very important class of countable Borel
equivalence relations.

The last type of equivalence relations that we shall consider are
isomorphism relations, that is, equivalence relations which arise from
classification problems.  One might expect that we would be interested
in the isomorphism relation on finite structures, since these are
coded by natural numbers.  However, it is computable whether two
finite structures are isomorphic, and we shall see that computable
equivalence relations are essentially trivial according to our
reducibility notion.  Instead, we shall consider isomorphism of
\emph{c.e.\ structures} by again using the indices as stand-ins for
the structures they code.  In this final context, our efforts either
extend or stand in close analogy with the results in recent literature
concerning effective notions of reducibility.

In addition to those already mentioned, many other important analogues
of Borel reducibility theory appear in the literature.  For instance
the effective version of Borel reducibility, that is, hyperarithmetic
reducibility between equivalence relations on the real or natural
numbers, is studied in \cite{asger:effective}.  The even weaker notion
of PTIME reducibility is considered in \cite{buss}.  Finally, in
\cite{ittm} the authors consider a computability-theoretic
strengthening of Borel reducibility, namely the reductions which are
computable by an infinite time Turing machine.

This paper is organized as follows.  In the next section we consider
relations of the first variety, that is, relations on the natural
numbers taken at face value.  Here, we show that computable
reducibility of equivalence relations in some sense generalizes both
many-one and one-one reducibility of c.e.\ sets.  In the third and
fourth sections we begin to export some of the Borel equivalence
relation theory to the theory of equivalence relations on c.e.\ sets,
for instance observing that many of the classical Borel reductions
hold in our context as well.  On the other hand, we show that some
unexpected phenomena occur, such as the existence of a large hierarchy
of relations on c.e.\ sets which lie properly below the equality
relation. In the fifth section we define and discuss c.e.\ analogues
of the countable Borel equivalence relations and orbit equivalence
relations.  In the sixth section we introduce the theory of
isomorphism and computable isomorphism relations on classes of c.e.\
structures. Finally, in the last section we compare equivalence
relations arising from computability theory itself, such as the Turing
degree relation on c.e.\ sets.

\section{Combinatorial relations on $\NN$}

In this section, we present a series of elementary results concerning
relations on $\NN$ of very low complexity.  It should be noted that
most of the statements here can be found in the aforementioned
literature (see for instance \cite{sorbi},\cite{gerdes}, and
\cite{fokina:sigma}).

We begin at what is for us the very lowest level, where some easy
general observations lead quickly to a complete classification of the
computable equivalence relations up to bireducibility.  The fact that
the computable relations are essentially trivial in the hierarchy of
computable reducibility stands in contrast to the Borel theory, of
course, where the Borel equivalence relations have a wildly rich
structure under Borel reducibility.  Meanwhile, at a level just above
the computable relations, we show that the hierarchy immediately
exhibits enormous complexity in the context of c.e.~equivalence
relations.  Here, we mention just a few results; a much more detailed
exposition of the relations at the c.e.\ level can be found in
\cite{gerdes}.  In later sections, we shall treat many natural
equivalence relations arising from much higher realms of the
arithmetic and even the descriptive set-theoretic hierarchies.

\begin{defn}\
  \begin{itemize}
  \item For each $n$ let $=_n$ be the equality relation on
    $0,\ldots,n-1$.  In order to make the relation defined on all of
    $\NN$, we throw the remaining numbers $n,n+1,\ldots$ into the
    equivalence class of $n-1$.
  \item Let $=_\NN$ denote the equality relation on $\NN$.
  \end{itemize}
\end{defn}

Thus $=_n$ is a canonically defined computable equivalence relation on
$\NN$ with exactly $n$ equivalence classes.  Similarly, $\NN$, is a
canonical computable equivalence relation with infinitely many
classes.

\begin{prop}\
  \label{prop:min}
  \begin{itemize}
  \item If $E$ is any equivalence relation with at least $n$ classes,
    then $=_n$ is reducible to $E$.
  \item If $E$ is $\Pi^0_1$ and $E$ has infinitely many equivalence
    classes, then $=_\NN$ is computably reducible to $E$.
  \end{itemize}
\end{prop}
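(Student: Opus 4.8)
The plan is to prove the two clauses separately, each by constructing an explicit computable reduction function.

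For the first clause, suppose $E$ has at least $n$ equivalence classes. First I would fix $n$ natural numbers $a_0, a_1, \ldots, a_{n-1}$ that are pairwise inequivalent under $E$; such numbers exist precisely because $E$ has at least $n$ classes. Then I define $f\from\NN\into\NN$ by setting $f(k) = a_k$ for $k < n-1$ and $f(k) = a_{n-1}$ for all $k \geq n-1$. This $f$ is computable, since it is determined by a finite table together with a constant value thereafter. To verify it is a reduction, note that $k =_n k'$ holds exactly when either both $k, k' < n-1$ and $k = k'$, or both $k, k' \geq n-1$; in the first case $f(k) = a_k$ and $f(k') = a_{k'}$ are $E$-equivalent iff $k = k'$ by the choice of the $a_i$, and in the second case $f(k) = f(k') = a_{n-1}$, so they are trivially $E$-equivalent. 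Conversely non-$=_n$-equivalent inputs land on distinct $a_i$'s, which are $E$-inequivalent. Note this clause requires no effectivity hypothesis on $E$: the witnesses $a_0, \ldots, a_{n-1}$ are hard-coded into the algorithm, so even a noncomputable $E$ admits this reduction.

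For the second clause, the difficulty is exactly that we can no longer hard-code infinitely many witnesses: we must compute, uniformly in an input $m$, an index $f(m)$ landing in the $m$-th "new" equivalence class of $E$, and for this we need to effectively recognize when a candidate is inequivalent to all earlier choices. This is where the hypothesis that $E$ is $\Pi^0_1$ enters. The idea is a greedy search: attempt to build a computable sequence $b_0, b_1, b_2, \ldots$ of pairwise $E$-inequivalent numbers by dovetailing. Having committed to $b_0, \ldots, b_{j-1}$, we search through candidates $c = 0, 1, 2, \ldots$, and for each candidate we run the $\Pi^0_1$ approximations to $c \mathrel{E} b_i$ for every $i < j$; since $E$ is $\Pi^0_1$, the statement "$c$ is $E$-inequivalent to $b_i$" is $\Sigma^0_1$ and hence, when true, is eventually confirmed. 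We accept the first candidate confirmed inequivalent to all of $b_0, \ldots, b_{j-1}$ and declare it $b_j$. Because $E$ has infinitely many classes, such a candidate always exists, so the search always terminates and the total computable function $j \mapsto b_j$ is well defined. Then set $f(m) = b_m$; this is computable, and $m = m'$ iff $b_m \mathrel{E} b_{m'}$ by construction, giving $=_\NN \,\leq\, E$.

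The main obstacle is the second clause, and within it the crucial point to get right is that the greedy search terminates at every stage. This is where both hypotheses are genuinely used: the $\Pi^0_1$-ness of $E$ makes inequivalence $\Sigma^0_1$ and hence semidecidable, so a true inequivalence is always eventually witnessed; and the infinitude of the class set guarantees that, at each stage $j$, at least one candidate lying outside the finitely many classes already hit does exist, so the semidecision procedure does succeed on it. One should also confirm that dovetailing the finitely many $\Sigma^0_1$ confirmations for a single candidate, across all candidates, yields a genuine algorithm (standard) and that the resulting $f$ is total (immediate from termination at each stage).
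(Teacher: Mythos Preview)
Your proposal is correct and follows essentially the same approach as the paper: hard-code $n$ pairwise inequivalent representatives for the first clause, and for the second clause use the $\Pi^0_1$ hypothesis to make inequivalence $\Sigma^0_1$, then greedily search for an infinite sequence of pairwise inequivalent elements and map $m$ to the $m$th one found. Your write-up is in fact more careful than the paper's, which compresses the second clause into two sentences; in particular you make explicit the dovetailing across candidates and the termination argument, both of which the paper leaves implicit.
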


\begin{proof}
  Let $i_0,\ldots,i_{n-1}$ be a system of pairwise $E$-inequivalent
  natural numbers.  Then $=_n$ is easily seen to be reducible to $E$
  by the map $f(k)=i_k$ for $k< n$, and $f(k)=i_{n-1}$ for $k\geq n$.

  Now suppose that $E$ is $\Pi^0_1$ and that $E$ has infinitely many
  classes.  Then $=_\NN$ is reducible to $E$ by the map which, on
  input $n$, begins enumerating $E$-incomparable elements.  When a
  system of $n$ pairwise $E$-incomparable elements is found, we map
  $n$ to the largest one.
\end{proof}

On the other hand, we shall see later on that there exist c.e.\
equivalence relations which are computably incomparable with $=_\NN$.

\goodbreak
\begin{prop}\label{prop:comp}\
  \begin{itemize}
  \item If $E$ is a $\Sigma^0_1$ or $\Pi^0_1$ equivalence relation
    with finitely many equivalence classes, then $E$ is computable.
  \item If $E$ is computable and has exactly $n$ classes, then $E$ is
    computably reducible to $=_n$.
  \item If $E$ is computable, then $E$ is computably reducible to
    $=_\NN$.
  \end{itemize}
\end{prop}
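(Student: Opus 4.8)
The plan is to establish the three items roughly in order, using the first to help with the second and the second (or a direct argument) to help with the third. For the first item, suppose $E$ is $\Sigma^0_1$ with finitely many classes; I would argue that its complement is also c.e., so $E$ is computable. The key observation is that if $E$ has exactly $k$ classes, then since $E$ is $\Sigma^0_1$ we can enumerate $E$, and I can search for a finite set of representatives: on any input pair $(n,n')$, simultaneously enumerate $E$ and look for $k$ pairwise $E$-inequivalent numbers $j_0,\dots,j_{k-1}$ (these must exist and will eventually appear, since the classes are nonempty and the enumeration is exhaustive once we allow for $E$-relatedness to accumulate). Once such representatives are found, every number is $E$-related to some $j_i$, and $E$-relatedness to a fixed representative is a $\Sigma^0_1$ question; but since the $j_i$ exhaust the classes, ``$n \not\mathrel{E} n'$'' becomes ``$n \mathrel{E} j_i$ and $n' \mathrel{E} j_\ell$ for some $i \neq \ell$,'' which is also $\Sigma^0_1$. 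Hence $E$ is $\Delta^0_1$, i.e.\ computable. The $\Pi^0_1$ case is dual (or follows by taking complements within the finitely many classes in the analogous way).

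For the second item, suppose $E$ is computable with exactly $n$ classes. I would define the reduction $f$ by letting $f(k)$ be the index $i < n$ such that $k$ lies in the $i$-th class, where I enumerate the classes in order of least element: formally, $f(k) = |\{\, m \le k : m \text{ is the least element of its $E$-class}\,\}| - 1$, computed by checking $E$-relatedness among $0, 1, \dots, k$, which is decidable. This $f$ is computable and satisfies $k \mathrel{E} k' \iff f(k) = f(k')$, and its range is contained in $\{0,\dots,n-1\}$, so it is a reduction to $=_n$ (recalling that on $\{0,\dots,n-1\}$ the relation $=_n$ is just equality). One must check that the range meets each value, but that is automatic since there are exactly $n$ classes.

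For the third item, if $E$ is computable with infinitely many classes, the same $f$ as above — map $k$ to the number of distinct $E$-classes among $0,\dots,k$, minus one — is a computable reduction, now landing in all of $\NN$, and $k \mathrel{E} k' \iff f(k) = f(k')$ exactly as before; if $E$ has finitely many classes, compose the reduction to $=_n$ from the second item with the evident inclusion $=_n \leq\, =_\NN$. Either way $E \leq\, =_\NN$. I do not expect any serious obstacle here; the only point requiring a little care is the first item, where one must correctly see that finitely many classes collapses the $\Sigma^0_1$/$\Pi^0_1$ distinction — the rest is bookkeeping with computable functions.
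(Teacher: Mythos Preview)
Your approach is essentially the same as the paper's: fix a complete set of representatives, use them to decide $E$ in the $\Sigma^0_1$ and $\Pi^0_1$ cases, and for the reduction map each $k$ to the index of its class when the classes are ordered by least element. Two small corrections are in order, however.

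First, in the $\Sigma^0_1$ case you cannot effectively \emph{search} for pairwise $E$-inequivalent representatives by enumerating $E$, since inequivalence is $\Pi^0_1$; the representatives must simply be fixed non-uniformly in advance (as the paper does), after which your decision procedure goes through verbatim.

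Second, and more substantively, your displayed formula
\[
f(k) = |\{\, m \le k : m \text{ is the least element of its $E$-class}\,\}| - 1
\]
does not implement your informal description: it counts how many classes have appeared among $0,\dots,k$, not which class $k$ itself belongs to. For instance, if $0,1,2$ are each least in their own class and $3 \mathrel{E} 0$, your formula gives $f(3)=2\neq 0=f(0)$, so $f$ is not a reduction. The fix is exactly what you said informally: let $f(k)$ be the number of classes whose least element is strictly below the least element of $[k]_E$. With these adjustments your proof coincides with the paper's.
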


\begin{proof}
  Begin by letting $i_1,\ldots,i_n$ be a maximal system of pairwise
  $E$-inequivalent natural numbers.  If $E$ is $\Sigma^0_1$, then
  given natural numbers $a,b$ we can decide whether $a\mathrel{E}b$ as
  follows.  Begin enumerating $E$-equivalent pairs until it is
  discovered that $a\mathrel{E}i_{j_1}$ and $b\mathrel{E}i_{j_2}$.
  Then $a\mathrel{E}b$ if and only if $j_1=j_2$.

  On the other hand, if $E$ is $\Pi^0_1$ then we can enumerate
  $E$-inequivalent pairs until we find that $a\not\mathrel{E}i_j$ for
  all $j$ other than some $j_1$, and $b\not\mathrel{E}i_j$ for all $j$
  other than some $j_2$.  Then again, $a\mathrel{E}b$ if and only if
  $j_1=j_2$.

  Finally, if $E$ is computable then given $a$, a program can order
  the equivalence classes which occur below $a$ by their least
  elements, and map $a\mapsto j$ if $a$ is in the $j\th$ class.  Note
  that in the case that $E$ has just $n$ classes, then this is also a
  reduction to $=_n$.
\end{proof}

We thus obtain a complete classification of the computable equivalence
relations by the number of equivalence classes.  This situation is
identical to the classification of Borel equivalence relations with
just countably many classes up to Borel reducibility.

\begin{cor}
  An equivalence relation $E$ is computable if and only if it is
  computably bireducible with one of $=_n$ or $=_\NN$.
\end{cor}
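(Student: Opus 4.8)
The plan is to read off both directions from Propositions~\ref{prop:min} and~\ref{prop:comp}, the only new ingredient being the elementary remark that computable reducibility into a computable relation preserves computability.

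For the reverse implication, I would first note that if $E\leq F$ via a computable function $f$ and $F$ happens to be computable, then $E$ is computable as well: to decide whether $n\mathrel{E}n'$, compute $f(n)$ and $f(n')$ and then ask whether $f(n)\mathrel{F}f(n')$, which is decidable. Since each $=_n$ and $=_\NN$ is itself a computable equivalence relation, any $E$ that is computably bireducible with one of them is in particular reducible to a computable relation, and hence computable.

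For the forward implication, suppose $E$ is computable and split into cases according to the number of its equivalence classes. If $E$ has exactly $n$ classes for some finite $n$, then the second item of Proposition~\ref{prop:comp} gives $E\leq{=_n}$, while the first item of Proposition~\ref{prop:min}, using that $E$ has at least $n$ classes, gives ${=_n}\leq E$; thus $E$ is bireducible with $=_n$. If instead $E$ has infinitely many classes, then the third item of Proposition~\ref{prop:comp} gives $E\leq{=_\NN}$, and since a computable relation is in particular $\Pi^0_1$, the second item of Proposition~\ref{prop:min} applies to yield ${=_\NN}\leq E$, so $E$ is bireducible with $=_\NN$.

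I do not anticipate any genuine obstacle here, as the substantive content is already contained in the two preceding propositions. The one point that needs to be stated explicitly is the pullback observation securing the reverse direction, namely that the preimage of a decidable relation under a computable function is again decidable.
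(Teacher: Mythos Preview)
Your proposal is correct and is exactly the argument the paper intends: the corollary is stated there without proof, as an immediate consequence of Propositions~\ref{prop:min} and~\ref{prop:comp}, and you have filled in precisely those details, including the easy pullback observation for the reverse direction.
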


The above results show that if we want to find distinct equivalence
relations with some finite number of classes $n$, then we must look at
least to the level $\Delta^0_2$.  In fact, we need look no higher,
since for instance any $\Delta^0_2$ non-computable equivalence
relation with exactly $n$ classes is not computably reducible to
$=_n$.  We now observe that there is significant complexity even among
the $\Delta^0_2$ equivalence relations with just two classes.

\begin{defn}
  \label{defn:EAC}
  For $A\subset\NN$, write $A^c$ for $\NN\smallsetminus A$.  Then
  $E_{A,A^c}$ denotes the equivalence relation defined by
  $n\mathrel{E}_{A,A^c}n'$ if and only if both $n,n'\in A$ or neither
  $n,n'\in A$.  That is, $E_{A,A^c}$ has two classes: $A$ and $A^c$.
\end{defn}

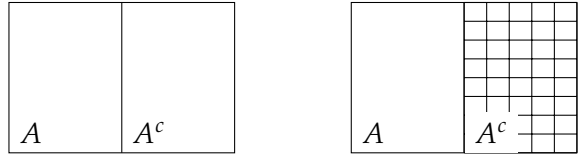
\begin{figure}
\begin{tikzpicture}
\draw (0,0) rectangle (3,2);
\draw (1.5,0) -- (1.5,2);
\node[anchor=south west] at (0,0) {$A$};
\node[anchor=south west] at (1.5,0) {$A^c$};
\end{tikzpicture}\qquad\qquad
\begin{tikzpicture}
\draw (0,0) rectangle (3,2);
\draw (1.5,0) -- (1.5,2);
\draw[xstep=.3,ystep=.25] (1.5,0) grid (3,2);
\node[anchor=south west] at (0,0) {$A$};
\node[anchor=south west,fill=white] at (1.5,0) {$A^c$};
\end{tikzpicture}
\caption{Left: The equivalence relations described in
  Definitions~\ref{defn:EAC} (left) and \ref{defn:EA} (right).}
\end{figure}

It is clear that if $A$ is c.e., then the relation $E_{A,A^c}$ is
$\Delta^0_2$; indeed, its complement is a difference of c.e.\ sets.
The result below follows directly from the definition of many-one
reducibility (see \cite[Definition~I.4.7]{soare}).

\begin{prop}
  \label{prop:manyone}
  $E_{A,A^c}$ is reducible to $E_{B,B^c}$ if and only if $A$ is many-one
  reducible to either $B$ or $B^c$.\qed
\end{prop}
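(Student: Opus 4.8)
The plan is to unwind each side of the biconditional directly against the definitions, exploiting that each relation has only two classes. A computable reduction $f$ of $E_{A,A^c}$ to $E_{B,B^c}$ must send each $E_{A,A^c}$-class into a single $E_{B,B^c}$-class and must send the two distinct classes to distinct classes; with only two classes available on the target side, the behaviour of $f$ is pinned down up to which of the two matchings it realizes, and each matching is exactly a many-one reduction of $A$ to $B$ or to $B^c$.

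For the forward direction, I would fix witnesses $a\in A$ and $a'\in A^c$ (assuming, as the setup tacitly does, that both classes are nonempty). Since $a\not\mathrel{E}_{A,A^c}a'$, the reduction property forces $f(a)\not\mathrel{E}_{B,B^c}f(a')$, so exactly one of $f(a),f(a')$ lies in $B$. If $f(a)\in B$, then $f$ witnesses $A\leq_m B$: for any $n$, if $n\in A$ then $n\mathrel{E}_{A,A^c}a$, hence $f(n)\mathrel{E}_{B,B^c}f(a)$ and so $f(n)\in B$; if $n\in A^c$ then likewise $f(n)\mathrel{E}_{B,B^c}f(a')$ and so $f(n)\in B^c$; thus $n\in A\iff f(n)\in B$. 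If instead $f(a)\in B^c$, the same argument with the roles of $B$ and $B^c$ swapped shows that $f$ witnesses $A\leq_m B^c$.

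For the converse I would simply verify that a many-one reduction is already a computable reduction in the required sense. If $g$ witnesses $A\leq_m B$, then $n\in A\iff g(n)\in B$ for all $n$, which is exactly the statement that $g$ respects and reflects membership in the class $A$; it follows that $n\mathrel{E}_{A,A^c}n'$ precisely when $g(n)\mathrel{E}_{B,B^c}g(n')$, by checking the three cases ``both in $A$,'' ``both in $A^c$,'' and ``split.'' If instead $g$ witnesses $A\leq_m B^c$, the symmetric computation (using $n\in A\iff g(n)\notin B$) again shows $g$ is a computable reduction of $E_{A,A^c}$ to $E_{B,B^c}$.

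I do not anticipate a genuine obstacle; the argument is a finite case analysis. The only point deserving a word of care is nondegeneracy: if any of $A,A^c,B,B^c$ is empty the relevant relation collapses to a single class, but the biconditional still holds in those cases (since $\emptyset$ is many-one below every proper subset of $\NN$, and $\NN$ is many-one below every nonempty set), so one may either dispatch them separately or simply assume all four sets nonempty, matching the convention of Definition~\ref{defn:EAC}.
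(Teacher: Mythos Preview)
Your proposal is correct and is precisely the definition-unwinding the paper has in mind: the paper states the proposition with a \qed\ and no proof, remarking only that it ``follows directly from the definition of many-one reducibility.'' Your argument spells out exactly that unpacking, and your handling of the degenerate cases is a welcome bit of tidiness beyond what the paper records.
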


Thus we obtain a copy of the partial ordering of many-one degrees,
modulo the relation which identifies $A$ and $A^c$.

We now turn to a discussion of the c.e.\ equivalence relations.  We
show in particular that the computable reducibility hierarchy has an
interesting and rich structure even at this low complexity level.  The
next result reflects the idea, hinted at in the introduction, that
computable reducibility of equivalence relations in some sense
generalizes the notion of one-one Turing reducibility of sets.

\begin{defn}
  \label{defn:EA}
  For any $A\subset\NN$, let $E_A$ denote the equivalence relation
  defined by $n\mathrel{E}_An'$ if and only if $n,n'\in A$ or $n=n'$.
  That is, $A$ is one equivalence class, and each remaining point is
  in an equivalence class by itself.
\end{defn}

Observe that if $A$ is c.e., then $E_A$ is c.e.

\begin{prop}
  Suppose that $A,B\subset\NN$ are c.e.\ and non-computable.  Then
  $E_A$ is reducible to $E_B$ if and only if $A$ is 1-reducible to
  $B$.
\end{prop}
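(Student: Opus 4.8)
The plan is to prove the two directions separately; the backward direction is immediate, while the forward direction requires care because a computable reduction of $E_A$ to $E_B$ need not be injective. For the backward direction, suppose $A\leq_1 B$, witnessed by an injective computable $f$ with $n\in A\iff f(n)\in B$. I claim that $f$ itself witnesses $E_A\leq E_B$: when $n=n'$ both sides hold by reflexivity, and when $n\neq n'$ we have $n\mathrel{E_A}n'$ iff $n,n'\in A$ iff $f(n),f(n')\in B$, while injectivity of $f$ gives $f(n)\neq f(n')$, so $f(n)\mathrel{E_B}f(n')$ iff $f(n),f(n')\in B$ as well. Hence $n\mathrel{E_A}n'\iff f(n)\mathrel{E_B}f(n')$ in every case.

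For the forward direction, let $g$ be a computable reduction of $E_A$ to $E_B$. I would first note that, being c.e.\ and non-computable, both $A$ and $B$ are infinite. Since any two elements of $A$ are $E_A$-equivalent, $g$ sends $A$ into a single $E_B$-class; as the only non-singleton class of $E_B$ is $B$ itself, either $g(A)\subseteq B$ or else $g(A)=\{b_0\}$ for some $b_0\notin B$. The second alternative is impossible: no $m\notin A$ can have $g(m)=b_0$, since otherwise, choosing any $a\in A$ with $a\neq m$, we would get $g(m)\mathrel{E_B}g(a)$ and hence $m\mathrel{E_A}a$, hence $m\in A$; thus $g^{-1}(b_0)=A$, contradicting that $A$ is non-computable. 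So $g(A)\subseteq B$. The same style of argument shows that $g$ is actually a many-one reduction of $A$ to $B$ — if $m\notin A$ but $g(m)\in B$, then picking $a\in A$ with $a\neq m$ gives $g(m)\mathrel{E_B}g(a)$, so $m\mathrel{E_A}a$ and $m\in A$ — and that $g$ is injective on $A^c$, since a collision $g(n)=g(m)$ with $n\neq m$ forces $n\mathrel{E_A}m$ and hence $n,m\in A$.

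It remains to upgrade the many-one reduction $g$ to an actual $1$-reduction $h$, and this is the step I expect to be the main obstacle, since many-one reductions cannot in general be promoted to one-one reductions; the upgrade must exploit both that $B$ is infinite and that $g$ can only collapse points lying in $A$, all of which it sends into $B$. I would define $h$ recursively: at stage $n$, with distinct values $h(0),\ldots,h(n-1)$ already chosen, compute $g(n)$; if it differs from all earlier values, set $h(n)=g(n)$; otherwise, enumerate the c.e.\ set $B$ until some element not among $h(0),\ldots,h(n-1)$ appears, and take that to be $h(n)$. The crucial claim, proved by induction along the construction, is that falling into the second case forces $n\in A$: a collision $g(n)=h(m)$ means either $h(m)=g(m)$, whence $g(n)=g(m)$ with $n\neq m$ puts $n,m\in A$, or else $h(m)$ was a previously chosen element of $B$, whence $g(n)\in B$ puts $n\in A$ via the many-one reduction. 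Granting this, $h$ is total, computable, and injective by construction, and $h(n)\in B\iff n\in A$: if $n\in A$ then either $h(n)=g(n)\in B$ or $h(n)$ was chosen in $B$, and if $n\notin A$ then by the claim $g(n)$ never collides with an earlier value, so $h(n)=g(n)\notin B$. Hence $h$ is a $1$-reduction of $A$ to $B$, which would finish the argument.
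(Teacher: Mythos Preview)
Your proof is correct and follows essentially the same approach as the paper: the backward direction is the trivial observation that a $1$-reduction is already a reduction of $E_A$ to $E_B$, and the forward direction first argues that the given reduction $g$ must map $A$ into $B$ (ruling out the singleton case via non-computability of $A$), deduces that $g$ is a many-one reduction injective on $A^c$, and then upgrades $g$ to a $1$-reduction by the same recursive ``replace collisions with fresh elements of $B$'' construction. Your treatment is in fact slightly more careful than the paper's in one place: the paper asserts without comment that a collision $g(n)=h(k)$ forces $g(n)\in B$, whereas you spell out the inductive argument distinguishing whether $h(k)$ arose as $g(k)$ or as a previously chosen element of $B$.
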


\begin{proof}
  If $f\from\NN\into\NN$ is a 1-reduction from $A$ to $B$, then it is
  easy to see that $f$ is a also a computable reduction from $E_A$ to
  $E_{B}$.  On the other hand, suppose that $f$ is a reduction from
  $E_A$ to $E_{B}$.  Then $f(A)$ is either contained in $B$, or else
  it is a singleton.  If $f(A)$ were a singleton, say $\{n\}$, then
  $A=f^{-1}(\{n\})$ would be computable, contradicting our hypothesis.
  Hence $f(A)\subset B$ and $f(\NN\smallsetminus
  A)\subset\NN\smallsetminus B$, and so $f$ is a many-one reduction
  from $A$ to $B$.  Moreover, $f$ is already injective on
  $\NN\setminus A$.

  Now, we will adjust $f$ on $A$ to obtain a $1$-reduction $g$ from
  $A$ to $B$ as follows.  Let $g(0)=f(0)$, and inductively let
  $g(n+1)=f(n+1)$ so long as $f(n+1)$ is distinct from
  $g(0),\ldots,g(n)$.  On the other hand, if $f(n+1)=g(k)$ for some
  $k\in0,\ldots,n$, then we must have that $f(n+1)\in B$.  Hence we
  simply enumerate $B$ in search of a new element $a\in B$ which is
  distinct from $g(0),\ldots,g(n)$, and let $f(n+1)=a$.  Then $g$ is
  as desired.
\end{proof}

Thus the partial ordering of c.e.\ equivalence relations is at least
as complicated as the $1$-reducibility ordering on the c.e.\
$1$-degrees.  This last ordering is known to be quite
complex. Moreover, we shall show later on that it contains a copy of
the c.e.\ sets with the partial ordering of containment.

\begin{cor}
  There exist c.e.\ relations which are incomparable with $=_\NN$.
\end{cor}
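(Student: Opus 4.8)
The plan is to realize both nonreducibilities with a single relation of the form $E_A$, choosing the c.e.\ set $A$ with some care. Since $E_A$ is c.e.\ whenever $A$ is, it suffices to produce a c.e.\ set $A$ for which neither $E_A \leq {=_\NN}$ nor ${=_\NN} \leq E_A$ holds. I would take $A$ to be a \emph{simple} set, that is, a c.e.\ set whose complement is infinite yet contains no infinite c.e.\ subset; such sets exist by a classical construction (see \cite{soare}), and any such $A$ is in particular non-computable.

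For the first direction, I would suppose toward a contradiction that $f$ is a computable reduction from $E_A$ to $=_\NN$, so that $n \mathrel{E_A} n' \iff f(n) = f(n')$. Fixing any $a \in A$ (possible since $A$ is infinite) and setting $c = f(a)$, one checks that $f(n) = c$ exactly when $n \in A$: if $n \in A$ then $n \mathrel{E_A} a$, while if $n \notin A$ then $n \neq a$ and $n \not\mathrel{E_A} a$. Hence $A = \{n : f(n) = c\}$ would be computable, a contradiction; this is essentially the argument already used in the proof of the preceding proposition. For the second direction, I would suppose $f$ is a computable reduction from $=_\NN$ to $E_A$. Then for distinct $n, n'$ we have $f(n) \not\mathrel{E_A} f(n')$, which unwinds to the two facts that $f(n) \neq f(n')$ and that $f(n), f(n')$ are not both in $A$. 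So $f$ is injective, its range is therefore an infinite c.e.\ set, and that range contains at most one point of $A$; deleting that point (if present) leaves an infinite c.e.\ subset of $\NN \smallsetminus A$, contradicting simplicity.

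Putting the two directions together yields the c.e.\ relation $E_A$ incomparable with $=_\NN$, as desired. I expect the only real subtlety to be the choice of $A$: non-computability by itself does not suffice for the second direction, since if $\NN \smallsetminus A$ happened to contain an infinite computable set then ${=_\NN} \leq E_A$ via the obvious injection onto that set. The crux is thus to use a set with immune complement---i.e., to take $A$ simple---and, correspondingly, to recall that such sets exist.
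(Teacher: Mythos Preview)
Your proposal is correct and follows essentially the same approach as the paper: both take $E_A$ for a simple set $A$, use non-computability of $A$ to rule out $E_A\leq{=_\NN}$, and use simplicity (immunity of $A^c$) to rule out ${=_\NN}\leq E_A$ by observing that a reduction would yield an infinite c.e.\ set meeting $A$ in at most one point. Your write-up is slightly more detailed in both directions, and your closing remark that mere non-computability of $A$ would not suffice is exactly on point.
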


\begin{proof}
  Consider the relation $E_A$ where $A$ is a simple set (see
  \cite[Theorem~V.1.3]{soare}).  That is, $A$ is a c.e.\ co-infinite
  set whose complement contains no infinite c.e.\ sets.  Since $A$ is
  not computable, neither is $E_A$, and it follows that $E_A$ is not
  reducible to $=_\NN$.  On the other hand, if $f$ is a computable
  reduction from $=_\NN$ to $E_A$ then there exists $n\in\NN$ such
  that $f(\NN\smallsetminus\{n\})\subset A^c$.  But this is a
  contradiction, since $f(\NN\smallsetminus\{n\})$ is c.e.\ and
  infinite.
\end{proof}

We next show that the class of c.e.\ equivalence relations, for all of
its complexity, still admits a universal element.  Again, this
reflects the situation for the c.e.\ degrees.

\begin{prop}
  There exists a c.e.\ relation $U_{ce}$ which is universal in the
  sense that every c.e.\ equivalence relation is reducible to
  $U_{ce}$.  Moreover, for any real parameter $z$, there exists an
  equivalence relation which is universal for all equivalence
  relations which are c.e.\ in $z$.
\end{prop}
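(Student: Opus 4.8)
The plan is to construct $U_{ce}$ by means of a uniform enumeration of all c.e.\ equivalence relations together with a standard amalgamation trick. First I would recall that there is a computable enumeration $\{V_i\}_{i\in\NN}$ of all c.e.\ subsets of $\NN\times\NN$, where $V_i = W_i$ under a fixed computable pairing. Not every $V_i$ is an equivalence relation, but we can uniformly pass to the equivalence relation it generates: let $R_i$ be the reflexive, symmetric, transitive closure of $V_i$, which is again c.e.\ uniformly in $i$, and note that if $V_i$ already \emph{is} an equivalence relation then $R_i = V_i$. Thus $\{R_i\}_{i\in\NN}$ is a uniformly c.e.\ list that includes every c.e.\ equivalence relation.

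Next I would define $U_{ce}$ on $\NN$ by working on the columns of a computable bijection $\NN \cong \NN\times\NN$, writing $\la i,n\ra$ for the element in column $i$, row $n$. Declare $\la i,n\ra \mathrel{U_{ce}} \la j,m\ra$ if and only if $i=j$ and $n \mathrel{R_i} m$. Since each $R_i$ is c.e.\ uniformly in $i$, this relation is c.e.; it is clearly reflexive and symmetric, and transitivity follows because distinct columns are never identified and within a column we are using the genuine equivalence relation $R_i$. Now if $E$ is any c.e.\ equivalence relation, fix $i$ with $E = R_i$ and set $f(n) = \la i, n\ra$; this is computable and satisfies $n\mathrel{E}n' \iff f(n)\mathrel{U_{ce}}f(n')$, so $E \leq U_{ce}$.

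For the relativized statement, I would simply repeat the construction with a real parameter $z$: there is a $z$-computable enumeration $\{W^z_i\}_{i\in\NN}$ of all sets c.e.\ in $z$, hence a uniformly-c.e.-in-$z$ list $\{R^z_i\}$ of the equivalence relations c.e.\ in $z$, and the same column construction yields a relation $U^z_{ce}$ that is c.e.\ in $z$ and to which every equivalence relation c.e.\ in $z$ is reducible by a (plain, total) computable function $f(n)=\la i,n\ra$. Note the reduction function remains computable, not merely $z$-computable, since it only needs to know the index $i$.

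The one point that requires a little care—and the main obstacle, such as it is—is verifying that the closure operation $V_i \mapsto R_i$ is genuinely uniformly c.e.\ and that $R_i = V_i$ when $V_i$ is already an equivalence relation; this is routine (enumerate into $R_i$ all pairs $(a,a)$, and whenever $(a,b)$ appears put in $(b,a)$, and whenever $(a,b),(b,c)$ have appeared put in $(a,c)$), but it is what guarantees that the list $\{R_i\}$ really does catch every c.e.\ equivalence relation on the nose rather than merely catching some c.e.\ refinement or coarsening of it. Everything else—the reflexivity, symmetry, and transitivity of $U_{ce}$, and the fact that the column map is a reduction—is immediate from the definitions.
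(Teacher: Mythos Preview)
Your proof is correct and essentially identical to the paper's: both take the equivalence closure $R_i$ (the paper calls it $E_e$) of the $i$th c.e.\ set of pairs, place $R_i$ on the $i$th column of $\NN\times\NN$, and observe that the column map $n\mapsto\la i,n\ra$ is a computable reduction even in the relativized case. If anything, you are slightly more careful than the paper in explicitly including the symmetric closure and in flagging that $R_i=V_i$ when $V_i$ is already an equivalence relation.
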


\begin{proof}
  For any program $e$, let $E_e$ be the equivalence relation obtained
  by taking the transitive closure of whatever relation is enumerated
  by $e$, together with the diagonal for reflexivity.  That is, we
  interpret the set $W_e$ as pairs, and we set $E_e$ to be the
  smallest equivalence relation containing these pairs. Thus, $E_e$ is
  the $e\th$ c.e.\ equivalence relation, and every c.e.\ equivalence
  relation arises this way.

  Now define the universal relation by $(e,a)\mathrel{U}_{ce}(e',a')$
  if and only if $e=e'$ and $a\mathrel{E_e}a'$.  Thus, we have divided
  $\NN$ into slices, and put $E_e$ on the $e\th$ slice. This relation
  is c.e., since we may computably enumerate approximations to
  $U_{ce}$ by running all programs and taking the transitive closure
  of what has been produced so far.  It is universal for c.e.\
  relations since $E_e$ reduces to $U_{ce}$ by mapping
  $a\mapsto(e,a)$.

  This construction generalizes to oracles as follows. If $z$ is any
  oracle, we have the notion $E_e^z$ and we can define the relation
  $U_{ce}^z$ defined by performing the above construction relative to
  $z$.  The relation $U_{ce}^z$ is $z$-c.e., and every $E_e^z$
  computably reduces to $U_{ce}^z$ (without need for $z$) by the map
  $a\mapsto(e,a)$, as before.
\end{proof}

Many of the results presented in this section so far are summarized in
Figure~\ref{fig:combinatorial}.

\begin{figure}[ht]
\begin{tikzpicture}
  \draw[dotted] (0,2.5) circle (2.5);
  \draw[dotted] (0,1.5) circle (1.5);
  \node[fill=white] at (2.3,1.5) {\small\emph{c.e.}};
  \node[fill=white] at (1.3,1) {\small\emph{comp}};
  \node[fill=white] at (0,0) (1) {$=_1$};
  \node at (0,1) (2) {$=_2$} edge (1);
  \node[fill=white] at (0,3) (N) {$=_\NN$}
    edge node[fill=white,sloped] (dots) {$\cdots$} (2);
  \node[fill=white] at (0,5) (uce) {$U_{ce}$} edge (N);
  \node at (0,7) (eq) {$=^{ce}$} edge (uce);
  \node at (1.5,3.5) (ea) {$E_A$} edge (uce) edge (1) edge (2) edge (dots);
  \node at (-3,4) {$E_{A,A^c}$} edge (eq) edge (2);
\end{tikzpicture}
\caption{Diagram of reducibility among equivalence relations from
  Section~2.  The relation $=^{ce}$ is equality of
  c.e.\ sets as a relation on indices, namely, $e=^{ce}f\iff W_e=W_f$, and
  it will be a major focus of section~3.\label{fig:combinatorial}}
\end{figure}
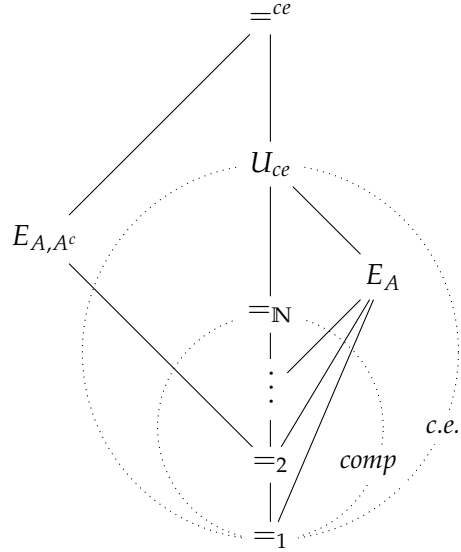

We close this section with a result that is perhaps just a curiosity,
but will motivate our discussion of orbit equivalence relations in
later sections.  In what follows, a group $\Gamma$ is said to be
computable if its domain is $\NN$ and its multiplication function is a
computable set of triples.  If $\Gamma$ is such a group, then a
computable action of $\Gamma$ on $\NN$ is just a computable function
$\Gamma\times\NN\into\NN$ satisfying the usual group action
laws. Finally, if $\Gamma$ acts computably on $\NN$, then the
resulting orbit equivalence relation is defined by
$x\mathrel{E_\Gamma}x'$ if and only if there exists $\gamma\in\Gamma$
such that $x'=\gamma x$.

\begin{thm}
  The c.e.\ equivalence relations are precisely the orbit equivalence
  relations induced by computable actions of computable groups.
\end{thm}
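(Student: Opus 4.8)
The plan is to prove the two inclusions separately. For the easier direction, suppose $\Gamma$ is a computable group acting computably on $\NN$, and consider the orbit equivalence relation $E_\Gamma$. To see that $E_\Gamma$ is c.e., note that $x \mathrel{E_\Gamma} x'$ holds if and only if there exists $\gamma \in \Gamma$ with $\gamma x = x'$, and since the action function $\Gamma \times \NN \into \NN$ is computable and the domain of $\Gamma$ is $\NN$, we may computably search over all $\gamma$ and all pairs $(x,x')$, enumerating a pair into our relation whenever we find a witnessing group element. This is a $\Sigma^0_1$ definition, so $E_\Gamma$ is c.e.

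For the substantial direction, let $E$ be an arbitrary c.e.\ equivalence relation; I want to produce a computable group $\Gamma$ with a computable action on $\NN$ whose orbit equivalence relation is exactly $E$. The natural candidate is to let $\Gamma$ act by permutations of $\NN$ that respect the $E$-classes, and the cleanest choice is to take $\Gamma$ to be (a computable copy of) the group generated by all transpositions $(a\ b)$ for pairs with $a \mathrel{E} b$ --- equivalently, the restricted direct sum (direct sum, i.e.\ finitely-supported product) of the finite symmetric groups on the finite $E$-classes together with the group of finitely-supported permutations of each infinite $E$-class. Since $E$ is c.e., we can enumerate the generating transpositions, and the group they generate consists exactly of the finitely-supported permutations of $\NN$ that fix each $E$-class setwise. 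Two points $x,x'$ lie in the same orbit under this group precisely when they lie in the same $E$-class: if $x \mathrel{E} x'$ then the transposition $(x\ x')$ moves one to the other, and conversely every generator, hence every group element, preserves $E$-classes. So the orbit equivalence relation is $E$.

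The main obstacle --- and the step requiring the most care --- is arranging that $\Gamma$ is genuinely a \emph{computable} group (domain $\NN$, computable multiplication) together with a \emph{computable} action, given only that $E$ is c.e.\ rather than computable. Elements of $\Gamma$ should be coded by finite data: a natural first attempt is to code a group element by a finite list of the transpositions used to build it, but multiplication and equality-testing on such codes seem to require deciding whether various composites act trivially, which in turn can require deciding $E$. The fix is to enumerate $E$ in stages and build $\Gamma$ as an increasing union $\bigcup_s \Gamma_s$ of finite groups, where $\Gamma_s$ uses only the $E$-pairs enumerated by stage $s$; one then assigns code numbers to new group elements as they appear and defines the multiplication table computably stage by stage. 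One must check that the resulting multiplication function on $\NN$ (with codes for elements not yet ``activated'' handled by a default convention, e.g.\ acting as the identity) is a computable set of triples, that it genuinely defines a group, and that the induced action $\Gamma \times \NN \into \NN$, sending a code and a point $x$ to the image of $x$ under the corresponding permutation, is computable --- this last point is straightforward since each group element is represented by an explicit finite permutation. I would also remark that the construction relativizes, matching the parenthetical generality of the surrounding discussion, though the statement as given only asks for the unrelativized equivalence.
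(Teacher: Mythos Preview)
Your easy direction is fine and matches the paper.  For the hard direction your instinct---generate a permutation group from the transpositions $(a\ b)$ with $a\mathrel{E}b$---is natural, and the orbit relation it produces is indeed $E$.  The difficulty you flag is real, but the resolution you sketch does not quite close the gap.  The problem is that your permutation group $\Gamma$ may be \emph{finite} (for instance if $E$ is equality, $\Gamma$ is trivial), and then your stage-by-stage enumeration of $\bigcup_s\Gamma_s$ exhausts only an initial segment of $\NN$.  Your proposed fix---let unassigned codes ``act as the identity''---concerns the action, not the multiplication: if codes $100$ and $200$ are never assigned, you still need $100\cdot 200$ to be some specific natural number, and if they are to be distinct group elements both acting trivially you have not described what group they generate.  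Worse, you cannot decide in advance whether a given code will ever be assigned, so you cannot uniformly split into cases.  This is repairable (e.g.\ replace $\Gamma$ by $\Gamma\times\ZZ$ with $\ZZ$ acting trivially, and dovetail the enumeration), but you have not done so.

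The paper avoids the whole issue with a cleaner device: take $\Gamma$ to be a \emph{fixed} computable copy of the free group $F_\omega$ on generators $x_1,x_2,\ldots$, independent of $E$, and push all of the $E$-dependence into the \emph{action}.  Specifically, if the enumeration of $E$ outputs the pair $(n,n')$ at stage $s$, let $x_i$ (where $i$ codes the triple $(s,n,n')$) act as the transposition $(n\ n')$; every other generator acts trivially.  Freeness guarantees this extends to an action, and the action is computable because the index $i$ of a generator already bounds how long one must run the enumeration to determine what $x_i$ does.  The orbits are exactly the $E$-classes for the same reason as in your argument.  The moral: rather than building the group out of $E$, use a fixed universal group and let the enumeration of $E$ determine only how the generators act.
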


\begin{proof}
  Clearly if $E$ is the orbit equivalence relation induced by a
  computable action, then it is c.e.~ Conversely, suppose that $E$ is
  a c.e.\ relation, enumerated by a program $e$.  Let $\Gamma$ be a
  fixed computable copy of the free group $F_\omega$ with generators
  $x_1,x_2,\ldots$.  If $e$ enumerates a pair $(n,n')$ into $E$ at
  stage $s$, then we let $i$ be a code for the triple $(s,n,n')$ and
  let $x_i$ act by swapping $n$ and $n'$ and leaving the other generators
  fixed.  Clearly the orbit relation arising from this action is
  precisely $E$.  Moreover the action is computable, since each
  generator codes a bound which tells how long to run $e$ to find out
  which elements it swaps.
\end{proof}

\section{Equivalence relations on c.e.\ sets}

We now begin our study of the second variety of equivalence relations,
namely, those defined on the c.e.\ real numbers.  In this section, we
will study a series of key equivalence relations from Borel relation
theory.  As was discussed in the introduction, the c.e.\ sets will be
represented by their indices.

\begin{defn}
  For any any equivalence relation denoted $E$ on the collection of
  c.e.\ subsets of $\NN$, we shall denote by $E^{ce}$, adding the
  superscript ``$ce$,'' the corresponding equivalence relation on
  $\NN$ defined on the indices by $e\mathrel{E}^{ce}e'$ if and only if
  $W_e\mathrel{E}W_{e'}$.
\end{defn}

One of the most important equivalence relation on c.e.\ sets is, of
course, the equality relation.

\begin{prop}
  \label{prop:eqce}
  $=^{ce}$ is a $\Pi^0_2$-complete set of pairs.
\end{prop}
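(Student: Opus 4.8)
The plan is to establish both directions of $\Pi^0_2$-completeness: first that $=^{ce}$, viewed as a set of pairs $\{(e,e') : W_e = W_{e'}\}$, is a $\Pi^0_2$ set, and second that every $\Pi^0_2$ set many-one reduces to it. For the upper bound, I would simply unfold the definition: $W_e = W_{e'}$ iff for all $n$, ($n \in W_e \leftrightarrow n \in W_{e'}$). Since $n \in W_e$ is a $\Sigma^0_1$ condition uniformly in $n$ and $e$, the biconditional $n \in W_e \leftrightarrow n \in W_{e'}$ is a Boolean combination of $\Sigma^0_1$ conditions, hence $\Delta^0_2$; prefixing the universal quantifier over $n$ yields a $\Pi^0_2$ predicate of the pair $(e,e')$. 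So $=^{ce}$ is $\Pi^0_2$.

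For the hardness direction, fix an arbitrary $\Pi^0_2$ set $S$, so that $m \in S \iff \forall j\, \exists k\, R(m,j,k)$ for some computable relation $R$. I would build, by the s-m-n theorem, a computable function $e(m)$ such that $W_{e(m)} = \NN$ whenever $m \in S$, and $W_{e(m)}$ is a proper (in fact finite or co-infinite) subset of $\NN$ whenever $m \notin S$. The standard device: on input $n$, the program for $W_{e(m)}$ searches for a witness $k$ with $R(m,n,k)$, and if it finds one, enumerates $n$ (and perhaps all elements below some bound) into the set. Then $n \in W_{e(m)}$ for every $n$ exactly when every $n$ has such a witness, i.e.\ when $m \in S$; if $m \notin S$ there is some least $j$ with no witness, and then $j \notin W_{e(m)}$, so $W_{e(m)} \neq \NN$. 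Now fix a standard index $d$ with $W_d = \NN$. The map $m \mapsto (e(m), d)$ is computable, and $m \in S$ iff $W_{e(m)} = \NN = W_d$ iff $e(m) \mathrel{=^{ce}} d$. This is the desired many-one reduction of $S$ to $=^{ce}$.

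The main obstacle—really the only point requiring care—is making sure that when $m \notin S$ the constructed set genuinely differs from $\NN$ and that the reduction target is a fixed index, not something depending on $m$; both are handled by the recipe above (search for witnesses, enumerate $n$ only upon success, and compare against the canonical index for $\NN$). One should also double-check that the failure of $m \in S$ produces an honest inequality rather than, say, $W_{e(m)}$ accidentally equal to $\NN$ via a different enumeration—but since $j \notin W_{e(m)}$ for the bad $j$, this cannot happen. No diagonalization or priority argument is needed here; the completeness is witnessed by a single plain many-one reduction, which is why the result is stated as a proposition rather than a theorem.
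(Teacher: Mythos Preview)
Your proof is correct and follows essentially the same approach as the paper: both arguments verify the $\Pi^0_2$ upper bound by unfolding the definition of set equality, and both establish hardness by reducing to the single equivalence class of $\NN$ (the paper invokes the known $\Pi^0_2$-completeness of $\mathrm{TOT}$ and maps $e$ to a program enumerating $\{n : \varphi_e(n)\!\downarrow\}$, while you spell out the same construction directly from an arbitrary $\Pi^0_2$ predicate). The content is the same; your version merely avoids citing $\mathrm{TOT}$.
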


\begin{proof}
  To see that $=^{ce}$ is in $\Pi^0_2$, note that $W_e=W_{e'}$ if and
  only if for all numbers $n$ enumerated into $W_e$, there is a stage
  by which $n$ is also enumerated into the other $W_{e'}$ (and vice
  versa).  To show that it is $\Pi^0_2$ complete, we use the fact that
  the set $\mathrm{TOT}$ of programs which halt on all inputs is
  $\Pi^0_2$ complete (see \cite[Theorem~IV.3.2]{soare}).  The set
  $\mathrm{TOT}$ is $m$-reducible to the $=^{ce}$-equivalence class of
  $W_{e_0}=\NN$ by the following function: $f(e)$ is the program which
  outputs $n$ just in case $e$ halts on input $n$.
\end{proof}

\begin{prop}
  If $E$ is a c.e.\ equivalence relation, then $E$ lies properly below
  $=^{ce}$.
\end{prop}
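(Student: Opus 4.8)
The plan is to establish two things: that every c.e.\ equivalence relation $E$ is reducible to $=^{ce}$, and that $=^{ce}$ is not reducible back to any c.e.\ relation. For the first, upper bound, the natural move is to exploit the universal c.e.\ relation $U_{ce}$ constructed above. Since $E \leq U_{ce}$ for every c.e.\ $E$, it suffices to show $U_{ce} \leq {=^{ce}}$, and indeed it is enough to show that every c.e.\ relation reduces to $=^{ce}$ by a direct argument. Given a c.e.\ equivalence relation $E = E_e$, I would send a number $n$ to an index $g(n)$ for the set $\{m : m \mathrel{E} n\}$, that is, the $E$-class of $n$; this set is c.e.\ uniformly in $n$ because $E$ is c.e., so $g$ is computable. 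Then $n \mathrel{E} n'$ iff these two classes are literally equal as sets, iff $W_{g(n)} = W_{g(n')}$, iff $g(n) \mathrel{=^{ce}} g(n')$. So $E \leq {=^{ce}}$.

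For the strictness, I need that $=^{ce}$ does not reduce to any c.e.\ relation, equivalently (combined with the above) that $=^{ce}$ is not c.e.\ as a set of pairs. But this is immediate from Proposition~\ref{prop:eqce}: $=^{ce}$ is $\Pi^0_2$-complete, hence in particular not $\Sigma^0_1$, so it is not a c.e.\ relation. If $=^{ce}$ were reducible to a c.e.\ relation $F$ via a computable $f$, then $=^{ce}$ would be the $f$-preimage of $F$, hence c.e., a contradiction. This also shows the reduction $E \leq {=^{ce}}$ is strict for every c.e.\ $E$, since a reduction back would again make $=^{ce}$ c.e.

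The main subtlety — though it is minor — is to double-check that "lies properly below'' is being asserted as a statement about a fixed relation, namely that $E \leq {=^{ce}}$ holds and ${=^{ce}} \not\leq E$, rather than merely that the two are not bireducible; the argument above delivers exactly this. A secondary point worth stating cleanly is the uniformity of the index function $g$: one invokes the s-m-n theorem to get, from an index $e$ for $E$, a computable function $n \mapsto g(n)$ with $W_{g(n)} = [n]_E$. No genuine obstacle arises; the content is entirely packaged in Proposition~\ref{prop:eqce} and the s-m-n theorem.
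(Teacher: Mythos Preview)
Your proof is correct and follows essentially the same approach as the paper: map each $n$ to an index for its $E$-class to get $E\leq{=^{ce}}$, and use the $\Pi^0_2$-completeness of $=^{ce}$ from Proposition~\ref{prop:eqce} to rule out any reduction back to a c.e.\ relation. The detour through $U_{ce}$ you mention is unnecessary (as you yourself note), and the paper proceeds directly just as you do.
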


\begin{proof}
  Let $E$ be an arbitrary c.e.\ relation.  Since each equivalence
  class $[n]_E$ of $E$ is c.e., we can define a reduction from $E$ to
  $=^{ce}$ by mapping each $n$ to a program $f(n)$ which enumerates
  $[n]_E$.  On the other hand, there can't be a reduction from
  $=^{ce}$ to $E$ since $=^{ce}$ is $\Pi^0_2$ complete and $E$ is c.e.
\end{proof}

A second equivalence relation which plays an essential role in the
Borel theory is the \emph{almost equality} relation on $\mathcal
P(\NN)$.  Let $E_0$ denote this relation, that is, $A\mathrel{E}_0B$
if and only if the symmetric difference $A\mathrel{\triangle}B$ is
finite.  Then $E_0^{ce}$ is the almost equality relation on the
(indices for) c.e.\ sets.  We have the following analogue with the
Borel theory.

\begin{thm}
  \label{thm:e0}
  $=^{ce}$ lies strictly below $E_0^{ce}$.
\end{thm}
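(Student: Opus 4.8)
The plan is to establish $=^{ce}\;\le\;E_0^{ce}$ and then $E_0^{ce}\;\not\le\;=^{ce}$, the latter being where the real work lies. For the reduction $=^{ce}\le E_0^{ce}$, the natural idea is to send an index $e$ to an index $f(e)$ for a c.e.\ set that ``spreads out'' $W_e$ so that finite differences are killed: concretely, put $n$ into $W_{f(e)}$ using the pair coding, e.g.\ enumerate $\langle n,k\rangle$ into $W_{f(e)}$ for all $k$ whenever $n$ enters $W_e$. Then $W_{f(e)}$ determines $W_e$ exactly on every ``column,'' so $W_e=W_{e'}\iff W_{f(e)}=W_{f(e')}\iff W_{f(e)}\mathrel{E_0}W_{f(e')}$: the last equivalence holds because two of these column-saturated sets that are not equal must differ on a whole infinite column, hence differ infinitely. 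This gives a computable reduction witnessing $=^{ce}\le E_0^{ce}$, and since $=^{ce}\le E_0^{ce}$ is then immediate, the content of the theorem is the strictness.

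For strictness, I would argue by complexity: I expect $E_0^{ce}$ to be properly $\Sigma^0_3$ (or at least to fail to be $\Pi^0_2$), whereas $=^{ce}$ is $\Pi^0_2$ by Proposition~\ref{prop:eqce}. If $E_0^{ce}$ reduced to $=^{ce}$ via a computable $f$, then $E_0^{ce}$ would be the preimage under the computable map $(e,e')\mapsto(f(e),f(e'))$ of a $\Pi^0_2$ set, hence itself $\Pi^0_2$; so it suffices to show $E_0^{ce}$ is not $\Pi^0_2$. The observation that $W_e\mathrel{E_0}W_{e'}$ says ``there is a finite set $F$ such that for all $n\notin F$, $n\in W_e\iff n\in W_{e'}$,'' and the inner matrix ``$n\in W_e\iff n\in W_{e'}$'' (for fixed $n$) is $\Delta^0_2$, gives a $\Sigma^0_3$ upper bound. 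To see it is not $\Pi^0_2$, I would reduce a known $\Sigma^0_3$-complete set — for instance $\{e : W_e \text{ is cofinite}\}$, which is $\Sigma^0_3$-complete — to the $E_0^{ce}$-class of $W_{e_0}=\NN$: the map sending $e$ to an index for $W_e$ itself already works, since $W_e$ is cofinite iff $W_e\mathrel{E_0}\NN$. Since a $\Sigma^0_3$-complete set is not $\Pi^0_2$, neither is $E_0^{ce}$ (a section of it fails to be $\Pi^0_2$, hence the relation as a set of pairs fails to be $\Pi^0_2$), and strictness follows.

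The main obstacle is pinning down the complexity bookkeeping cleanly: I must make sure that ``not $\Pi^0_2$'' for a single section of $E_0^{ce}$ genuinely obstructs a reduction $E_0^{ce}\le\;=^{ce}$, and that the cofiniteness set really is $\Sigma^0_3$-complete and not $\Pi^0_2$ (this is standard — see the analysis of $\mathrm{COF}$ in Soare — but I would cite it rather than reprove it). A cosmetic point: one should double-check the direction of the reduction in the first half, i.e.\ that the column-saturation map is a genuine reduction and not merely a one-way implication; the equality $W_e=W_{e'}$ is clearly preserved and reflected, so this is routine. An alternative to the complexity argument, should one prefer a more ``hands-on'' proof, is a direct finite-injury-style diagonalization against all candidate computable reductions $f$, building c.e.\ sets whose $E_0$-behavior is mis-tracked by the corresponding $=^{ce}$-behavior of the $W_{f(e)}$; but the complexity argument is shorter and I would present that.
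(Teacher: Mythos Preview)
Your proposal is correct and follows essentially the same route as the paper: the column-saturation map $e\mapsto f(e)$ with $W_{f(e)}=\{\langle n,k\rangle : n\in W_e,\ k\in\NN\}$ is exactly the reduction the paper gives for $=^{ce}\le E_0^{ce}$, and for strictness the paper likewise argues by complexity, observing that the single $E_0^{ce}$-class $\mathrm{COF}=\{e:W_e\text{ is cofinite}\}$ is $\Sigma^0_3$-complete (citing Soare) while $=^{ce}$ is only $\Pi^0_2$. Your bookkeeping about sections versus the full relation is a touch more explicit than the paper's, but the argument is the same.
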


\begin{proof}
  It is not difficult to build a reduction from $=^{ce}$ to
  $E_0^{ce}$.  For instance, given a program $e$, we can define the
  program $f(e)$ as follows.  Whenever $n$ is enumerated into $W_e$,
  the program $f(e)$ enumerates codes for the pairs
  $(n,0),(n,1),(n,2),\ldots$ (or more accurately, it arranges to
  periodically add more and more of these).  Then clearly we have that
  $W_e$ and $W_{e'}$ differ if and only if $W_{f(e)}$ and $W_{f(e')}$
  differ infinitely often.

  On the other hand, there cannot be a computable reduction from
  $E_0^{ce}$ to $=^{ce}$, since $E_0^{ce}$ is $\Sigma^0_3$ complete
  while $=^{ce}$ is just $\Pi^0_2$.  To see that $E_0$ is $\Sigma^0_3$
  complete, note that by \cite[Corollary~IV.3.5]{soare}, even its
  equivalence class $\mathrm{COF}=\set{e\mid W_e\textrm{ is
      cofinite}}$ is $\Sigma^0_3$ complete.
\end{proof}

It is natural to wonder to what extent the arithmetic equivalence
relations on c.e.\ sets mirror the structure of the Borel equivalence
relations.  For instance, it is a fundamental result of Silver (see
\cite[Theorem~5.3.5]{gao}) that the equality relation $=$ is minimum
among all Borel equivalence relations with uncountably many classes.
Thus, it is natural to ask whether an analogue of Silver's theorem
holds, that is, whether $=^{ce}$ is minimum among some large class of
relations on the c.e.\ sets with infinitely many classes.  In the next
section we shall show that this fails even for relations of very low
complexity.  However, we do not address any other analogues of the
classical dichotomy theorems.

\begin{question}
  Are there any relations lying properly between $=^{ce}$ and
  $E_0^{ce}$?  More generally, is there a form of the Glimm-Effros
  dichotomy in this context?  In other words, is there a large
  collection of equivalence relations $E$ such that if $=^{ce}$ lies
  strictly below $E$, then $E_0^{ce}$ lies below $E$?
\end{question}

We next consider some of the combinatorial equivalence relations that
play key roles in the Borel theory.

\begin{defn}\
  \label{def:ei}
  \begin{itemize}
  \item Let $E_1$ be the equivalence relation on $\mathcal P(\NN)^\NN$
    defined by $(A_n)\mathrel{E}_1(B_n)$ if and only if for almost all
    $n$, $A_n=B_n$.
  \item Let $E_2$ be the equivalence relation defined by
    $A\mathrel{E_2}B$ if and only if $\sum_{n\in A\triangle
      B}1/n<\infty$.
  \item Let $E_3$ be the equivalence relation on $\mathcal P(\NN)^\NN$
    defined by $(A_n)\mathrel{E}_3(B_n)$ if and only if for all $n$,
    $A_n\mathrel{E_0}B_n$.
  \item Let $E_\sset$ be the equivalence relation on $\mathcal
    P(\NN)^\NN$ defined by $(A_n)\mathrel{E_\sset}(B_n)$ if and only if\\
    $\set{A_n\mid n\in\NN}=\set{B_n\mid n\in\NN}$.
  \item Let $Z_0$ denote the \emph{density} equivalence relation
    defined by $A\mathrel{Z}_0B$ if and only if\\
    $\lim\abs{(A\mathrel{\triangle}B)\cap n}/n=0$.
  \end{itemize}
\end{defn}

As usual, we are really interested in the corresponding ``superscript
c.e.''\ relations on the indices.  That is, we define
$e\mathrel{E}_1^{ce}e'$ if and only if $W_e$ and $W_{e'}$, thought of
as subsets of the lattice $\NN\times\NN$, are identical in almost
every column of this lattice. The relations $E_3^{ce}$ and
$E_\sset^{ce}$ are defined analogously, using c.e.\ subsets of
$\NN\times\NN$ to represent c.e.\ sequences of c.e.\ sets.

\begin{prop}
\label{prop:Borel}
  The Borel reductions between the equivalence relations given in
  Definition~\ref{def:ei} hold also in the case of computable
  reducibility.  Specifically, $E_0^{ce}$ is reducible to $E_1^{ce}$,
  $E_2^{ce}$, and $E_3^{ce}$, and $E_3$ is reducible to $E_\sset$ and
  $Z_0$.
\end{prop}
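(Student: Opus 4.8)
The plan is to verify, one reduction at a time, that each of the classical Borel reductions among these combinatorial relations can be implemented by a computable function on indices. The key point to keep checking is that the construction, which in the Borel setting manipulates actual subsets of $\NN$, can instead be carried out uniformly on programs: given an index $e$, we must be able to produce an index for the target set effectively, and the target set itself must be c.e.\ whenever $W_e$ is. Since c.e.\ sets are closed under the finitary, uniformly computable operations we will use (products with $\NN$, shifts, interleaving of columns, etc.), this will go through; the content is just to record the standard reductions and observe their effectivity.

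First I would handle $E_0^{ce}\leq E_1^{ce}$: given $e$, let $f(e)$ enumerate, whenever $n$ enters $W_e$, the pair $(n,0)$ (viewing $W_{f(e)}$ as living in $\NN\times\NN$ with columns indexed by the first coordinate). Then $W_e\mathrel{E_0}W_{e'}$ iff $W_e\triangle W_{e'}$ is finite iff $W_{f(e)}$ and $W_{f(e')}$ differ in only finitely many columns (all of them being column $0$, or rather finitely many columns each differing by one element)---wait, more carefully: place $n$ into column $n$, so $W_{f(e)}$ has $(n,0)$ in column $n$ iff $n\in W_e$; then the sequences agree on column $n$ iff $n$'s membership agrees, so $E_0$ becomes exactly $E_1$. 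This is plainly computable. For $E_0^{ce}\leq E_3^{ce}$, use the same trick viewing $E_0$ as $E_3$ on constant-in-each-column sequences, or simply note $E_0\leq E_3$ via $A\mapsto (A,A,A,\dots)$ done on indices. For $E_0^{ce}\leq E_2^{ce}$, recall the Borel reduction sending $A\subseteq\NN$ to $\bigcup_{n\in A} I_n$ where $I_0,I_1,\dots$ is a partition of $\NN$ into consecutive intervals with $\sum_{k\in I_n}1/k\geq 1$ for each $n$ (such intervals exist and their endpoints are computable from $n$); then $A\triangle B$ finite makes $\sum_{k\in(\cdot)\triangle(\cdot)}1/k$ finite, while $A\triangle B$ infinite makes it diverge. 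On indices: $f(e)$ enumerates all of $I_n$ as soon as $n$ enters $W_e$. This is c.e.\ and the index is computable from $e$.

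Next, $E_3\leq E_\sset$: this is the standard "tag the column index" reduction. Given a sequence $(A_n)$, map it to the sequence whose $\langle n,F\rangle$-th term (as $F$ ranges over the finite subsets of $\NN$) is $A_n\triangle F$, suitably coded so that the column index $n$ can be recovered---concretely, send $(A_n)$ to the set $\set{A_n\triangle F : n\in\NN, F\text{ finite}}$ presented as a sequence, using a coding of $\{n\}\times(A_n\triangle F)$ that marks the value $n$. Then $(A_n)\mathrel{E_3}(B_n)$, i.e.\ $A_n\mathrel{E_0}B_n$ for all $n$, iff the two resulting sets of sets coincide. On indices, given a c.e.\ subset $W_e$ of $\NN\times\NN$ coding $(A_n)$, one enumerates the required c.e.\ subset of $\NN\times\NN$ coding the target sequence; this is a uniformly computable operation on programs because the finite modifications $F$ and the tagging are all computable. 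For $E_3\leq Z_0$: use the classical reduction packing the $n$-th column into a block of $\NN$ of positive density that nonetheless shrinks relative to the whole, so that a finite change in any one column has density $0$, but the blocks are spaced so that the density contribution of column $n$ is controlled---the standard argument (see the Borel literature) shows $(A_n)\mathrel{E_3}(B_n)\iff$ the images have density-zero symmetric difference. Again the block structure is computable, so the map on indices is computable.

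The main obstacle, such as it is, is bookkeeping in the $E_3\leq E_\sset$ and $E_3\leq Z_0$ reductions: one must check that recovering the column index from an element of the image set is done in a way compatible with $E_0$-modifications (so that an $E_0$-change in column $n$ doesn't accidentally produce an element looking like it belongs to a different column), and for $Z_0$ one must verify the density computation---that the spacing of blocks makes per-column finite errors negligible while keeping distinct column-contents at positive relative size. But these are exactly the verifications already carried out in the Borel setting, and nothing in them uses more than uniform computability, so transferring them to indices is routine. I would therefore present the five reductions in sequence, in each case (i) writing down the map on indices, (ii) noting it is computable and produces c.e.\ sets from c.e.\ sets, and (iii) citing or sketching the one-line equivalence verification from the Borel theory.
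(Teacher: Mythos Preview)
Your reductions for $E_0^{ce}\leq E_1^{ce},E_2^{ce},E_3^{ce}$ are correct and essentially match the paper (your $E_0\leq E_1$ map is in fact simpler than the paper's), and your sketch for $E_3^{ce}\leq Z_0^{ce}$, while vague, points in the right direction. But your reduction from $E_3^{ce}$ to $E_\sset^{ce}$ has a genuine gap. You propose listing, as columns of the output, the sets $A_n\triangle F$ (tagged with $n$) as $F$ ranges over finite sets. If $A_n$ is merely c.e., however, $A_n\triangle F$ need not be c.e.: for $m\in F$ one has $m\in A_n\triangle F$ iff $m\notin A_n$, a $\Pi^0_1$ condition. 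So there is no uniform way to produce an index enumerating $A_n\triangle F$ from an index for $A_n$, and your map does not carry c.e.\ sequences to c.e.\ sequences. The bookkeeping issue you flag (recovering the column index from the output) is not the real obstacle; preserving enumerability is.

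The paper's fix is to replace symmetric-difference modifications by \emph{initial-segment} modifications: for each $n$ and each $s\in 2^{<\NN}$, output the set coded by the string $1^n\concat 0\concat s$ followed by the tail $A_n\smallsetminus|s|$. The prefix $1^n0$ tags the column index $n$, the finite string $s$ plays the role of your finite modification $F$, and crucially $A_n\smallsetminus|s|$ is c.e.\ uniformly in an index for $A_n$, so each output column is honestly c.e. Two sets are $E_0$-related exactly when they share a common tail, so ranging over all $s$ still sweeps out the full $E_0$-class of $A_n$ (modulo the tag), and the $E_\sset$-verification goes through as in the Borel case.
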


\begin{proof}[Sketch of proof]
  The reductions are just the same as the classical ones from the
  Borel theory.  We will quickly give the reductions so that the
  reader may verify they are computable in our sense.  To begin,
  notice that $E_0$ is reducible to $E_3$ by the map: $A\mapsto
  \NN\times A$, where each column of the image of $A$ looks exactly
  like $A$ itself.  To reduce $E_0$ to $E_1$, map $A$ to $\set{\langle
    x,y\rangle\mid y\geq x~\&~y\in A}$, so that the column $x$ equals
  $A-\{ 0,\ldots,x-1\}$.  For $E_0\leq E_2$, we let $I_n$ be any fixed
  c.e.\ partition of $\NN$ into sets such that $\sum_{i\in
    I_n}1/i\geq1$.  Then it is easy to see that $E_0\leq E_2$ via the
  map $A\mapsto\bigcup_{n\in A}I_n$.

  To show that $E_3\leq Z_0$, we first observe that $E_0$ reduces to
  $Z_0$ via the map $f(A)=\bigcup_{n\in A}[2^n,2^{n+1})$.  (Indeed, if
  $A\mathrel{E_0}B$ then $f(A)\mathrel{\triangle} f(B)$ is finite and
  hence has density zero.  Conversely, if $A\mathrel{E_0}B$ is
  infinite, then $(f(A)\mathrel{\triangle}f(B))\cap N$ will be
  $\geq1/2$ infinitely often, and hence $f(A)\mathrel{\triangle}f(B)$
  does not have density zero.)  Now, fix a partition $I_n$ of $\NN$
  into sets such that the density of $I_n$ is exactly $1/2^n$, and let
  $\pi_n$ denote the unique isomorphism $\NN\iso I_n$.  Then we can
  reduce $E_3$ to $Z_0$ using the map $(A_n)\mapsto\bigcup
  \pi_n(f(A_n))$.

  Finally, to reduce $E_3$ to $E_\sset$, suppose we are given a
  sequence $(A_n)$.  For each column $A_n$ and each $s\in2^{<\NN}$, we
  will place the set $1^n\concat0\concat s\concat(A\smallsetminus
  \abs{s})$ as a column of $f((A_n))$.  It is not difficult to check
  that this mapping is as desired.
\end{proof}

\begin{figure}[ht]
\begin{tikzpicture}
  \node at (0,0) (eq) {$=$};
  \node at (0,1) (e0) {$E_0$} edge (eq);
  \node at (-1,2) (e1) {$E_1$} edge (e0);
  \node at (0,2) (e2) {$E_2$} edge (e0);
  \node at (1,2) (e3) {$E_3$} edge (e0);
  \node at (.5,3) (eset) {$E_{set}$} edge (e3);
  \node at (1.5,3) (z0) {$Z_0$} edge (e3);
\end{tikzpicture}
\qquad\qquad
\begin{tikzpicture}
  \node at (0,0) (eq) {$=^{ce}$};
  \node at (0,1) (e0) {$E_1^{ce}\sim E_0^{ce}$} edge (eq);
  \node at (-1,2) (e2) {$E_2^{ce}$} edge (e0);
  \node at (1,2) (e3) {$E_3^{ce}$} edge (e0);
  \node at (.5,3) (eset) {$E_{set}^{ce}$} edge (e3);
  \node at (1.5,3) (z0) {$Z_0^{ce}$} edge (e3);
\end{tikzpicture}
\caption{Left: The Borel reducibility relations between classical
  combinatorial equivalence relations.  This diagram is complete for
  Borel reducibility.  Right: Computable reducibility relations
  between c.e.\ versions of these relations.  We do not know whether
  this diagram is complete for computable reducibility.\label{fig:ei}}
\end{figure}
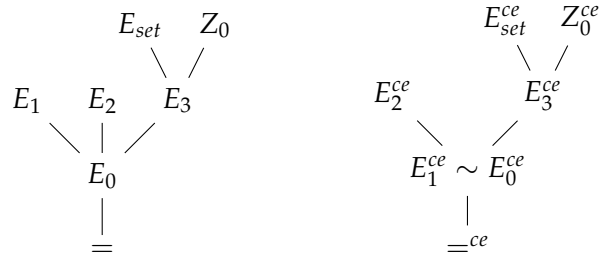

This result is summarized in Figure~\ref{fig:ei}.  We remark that the
diagram is complete for Borel reducibility, in the sense that any edge
not shown is known to correspond to a nonreduction.  For computable
reducibility, Theorem \ref{thm:e3} uses the arithmetic hierarchy to
show that the missing edges involving $E_3^{ce}$ remain nonreductions.
The other missing edges are more difficult, and among them, the only
one which we have settled appears further below as Theorem
\ref{thm:E0E1}. Surprisingly, this theorem shows that $E_1^{ce}\leq
E_0^{ce}$, which distinguishes the hierarchy for computable
reducibility from that for Borel reducibility.

\begin{thm}
  \label{thm:e3}
  $E_3^{ce}$ is not computably reducible to any of $E_0^{ce}$,
  $E_1^{ce}$ or $E_2^{ce}$.
\end{thm}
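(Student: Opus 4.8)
The plan is to exploit the arithmetic hierarchy: compute the complexity of $E_3^{ce}$ as a set of pairs, compute (or recall) the complexities of $E_0^{ce}$, $E_1^{ce}$, $E_2^{ce}$, and argue that no computable reduction can exist because a reduction from a relation that is hard at some level to one that is easy at that level is impossible. Recall that $=^{ce}$ is $\Pi^0_2$-complete and $E_0^{ce}$ (equivalently $\mathrm{COF}$) is $\Sigma^0_3$-complete; by the reduction $E_0^{ce}\leq E_1^{ce}\leq E_0^{ce}$ (Theorem~\ref{thm:E0E1}, cited as already available) these two relations sit at exactly the $\Sigma^0_3$ level, and $E_2^{ce}$ should likewise be $\Sigma^0_3$ (its defining condition $\sum_{n\in A\triangle B}1/n<\infty$ is $\Sigma^0_3$ over indices, being ``$\exists$ bound $\forall$ finite stages the partial sums stay below it,'' and it is $\Sigma^0_3$-hard since $E_0^{ce}$ reduces to it). The key point, then, is that $E_3^{ce}$ is \emph{properly} $\Pi^0_3$: membership in $E_3^{ce}$ says that for every column $n$, the $n$-th columns of $W_e$ and $W_{e'}$ are $E_0$-equivalent, i.e.\ $\forall n\,[$their symmetric difference in column $n$ is finite$]$, which is $\forall\exists\forall$, hence $\Pi^0_3$.

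First I would carefully verify that $E_3^{ce}$ is $\Pi^0_3$: writing the column-$n$ symmetric difference of $W_e,W_{e'}$ as a difference of c.e.\ sets, ``column $n$ symmetric difference is finite'' is $\Sigma^0_2$ uniformly in $e,e',n$, and then the outer $\forall n$ gives $\Pi^0_3$. Next I would establish that $E_3^{ce}$ is $\Pi^0_3$-\emph{hard} — indeed $\Pi^0_3$-complete as a set of pairs. The cleanest route is to reduce a known $\Pi^0_3$-complete set to (a single $E_3^{ce}$-class of, or the full relation) $E_3^{ce}$: for instance, the set $\{e : W_e$ has the property that every column is cofinite$\}$, or better, use that $\mathrm{COF}$ is $\Sigma^0_3$-complete so its complement $\overline{\mathrm{COF}}$ is $\Pi^0_3$-complete, and build from a program $e$ a program $g(e)$ whose columns are designed so that $W_{g(e)}$ is $E_3$-equivalent to the fixed all-of-$\NN\times\NN$ set iff $e\in\{x : W_x$ is cofinite in every column$\}$ — choosing a genuinely $\Pi^0_3$-complete driver. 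Concretely, take a $\Pi^0_3$-complete predicate $\forall n\,\exists m\,\forall k\, R(n,m,k)$ with $R$ computable, and for each $n$ let column $n$ of $f(e)$ enumerate $\NN$ if $\exists m\,\forall k\,R(n,m,k)$ holds (by a movable-marker construction that keeps column $n$ finite until a witness $m$ looks good and never gets refuted) and stay finite otherwise; then $f(e)\mathrel{E_3^{ce}}(\NN\times\NN)$ iff the predicate holds. This is the technical heart of the argument.

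Once $E_3^{ce}$ is shown to be properly $\Pi^0_3$ (i.e.\ $\Pi^0_3$-complete and, since $\Pi^0_3\neq\Sigma^0_3$, not $\Sigma^0_3$), the nonreducibility is immediate from the standard fact that computable reducibility is a form of $m$-reducibility on the pair sets: if $g$ is a computable reduction from $E_3^{ce}$ to an equivalence relation $F$, then the map $(e,e')\mapsto(g(e),g(e'))$ is an $m$-reduction from $E_3^{ce}$ (as a set of pairs) to $F$ (as a set of pairs), so $F$ would have to be $\Pi^0_3$-hard; but $E_0^{ce}$, $E_1^{ce}$, $E_2^{ce}$ are all $\Sigma^0_3$, hence not $\Pi^0_3$-hard, a contradiction. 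I would state this once as a lemma (computable reducibility respects the arithmetic hierarchy on pair-sets) and then apply it to all three targets simultaneously.

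The main obstacle I anticipate is the $\Pi^0_3$-hardness construction for $E_3^{ce}$ — in particular making the per-column enumeration genuinely track an arbitrary $\Pi^0_3$ predicate uniformly and computably, so that a ``true'' column is forced infinite (cofinite) and a ``false'' column is forced finite, with no interference between columns. Everything else (upper bounds, the $\Sigma^0_3$ computations for the targets, and the arithmetic-hierarchy-is-preserved lemma) is routine counting of quantifiers. A secondary, lesser obstacle is confirming that $E_2^{ce}$ is $\Sigma^0_3$ rather than something higher; I would handle this by noting that ``$\sum_{n\in A\triangle B}1/n<\infty$'' unwinds to $\exists q\in\NN\,\forall$ finite approximation stages $s$, $\sum_{n\in(W_e\triangle W_{e'})[s]}1/n\le q$, which is $\Sigma^0_3$ once one observes the inner matrix is $\Pi^0_2$ in $e,e',s$ and the stage quantifier is bounded-by-a-single-$\forall$.
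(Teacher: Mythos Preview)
Your overall strategy matches the paper's exactly: show that $E_0^{ce}$, $E_1^{ce}$, $E_2^{ce}$ are all $\Sigma^0_3$, and show that $E_3^{ce}$ is not $\Sigma^0_3$ by exhibiting a single $\Pi^0_3$-complete equivalence class. However, your concrete hardness construction has the dichotomy backwards, and as stated it cannot work.

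You propose to make column $n$ equal to all of $\NN$ when the $\Sigma^0_2$ condition $\exists m\,\forall k\,R(n,m,k)$ holds, and finite otherwise. But ``column $n=\NN$'' is a $\Pi^0_2$ property of the index, while ``column $n$ is finite'' is $\Sigma^0_2$; a uniform computable map achieving your dichotomy would $m$-reduce a $\Sigma^0_2$-complete predicate to a $\Pi^0_2$ set, which is impossible. Your parenthetical description --- keep the column finite ``until a witness $m$ looks good and never gets refuted'' --- cannot be implemented, since no finite stage ever certifies that $m$ will never be refuted.

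The paper runs the construction in the opposite (feasible) direction. For each $n$ it tries candidate witnesses $N=0,1,2,\ldots$ in turn; each time the current $N$ is refuted by some $k$, it enumerates one more element into column $n$ and moves on to $N+1$. Thus column $n$ is \emph{finite} precisely when some $N$ eventually works, i.e.\ when the $\Sigma^0_2$ condition holds. The $\Pi^0_3$-complete target class is therefore $F=\{e:\text{every column of }W_e\text{ is finite}\}$, the $E_3^{ce}$-class of $\emptyset$, rather than the class of $\NN\times\NN$. With this single correction your argument goes through and is essentially the paper's. (Incidentally, you need not invoke $E_1^{ce}\sim E_0^{ce}$ to place $E_1^{ce}$ in $\Sigma^0_3$; direct quantifier counting suffices, which is all the paper uses.)
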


\begin{proof}
  We use a direct arithmetic complexity argument.  Observe that
  $E_0^{ce}$, $E_1^{ce}$, and $E_2^{ce}$ are all easily seen to be
  $\Sigma^0_3$.  On the other hand, we shall show that $E_3^{ce}$ is
  not $\Sigma^0_3$.  In fact, we shall show that $E_3^{ce}$ has one
  equivalence class which is $\Pi^0_3$ complete, namely, the set $F$
  consisting of indices $e$ for c.e.\ subsets of $\NN\times\NN$ such
  that every column of $W_e$ is finite.  (As a matter of fact,
  $E_3^{ce}$ is $\Pi^0_4$-complete, but we will not prove this here.)

  To begin, consider any set of the form $A=\set{x\mid(\forall
    n)(\exists N)(\forall k)\phi(x,n,N,k)}$, where $\phi$ is
  computable.  For each $x$, we shall write down a program $e_x$ which
  enumerates a c.e.\ subset of $\NN\times\NN$ such that $x\in A$ if
  and only if $e_x\in F$.  For each $n$, the program $e_x$ considers
  each $N$ in turn, and checks whether for all $k$, $\phi(x,n,N,k)$
  holds.  Whenever we find a counterexample $k$ for the current $N$,
  we enumerate one more element into the $n\th$ column of $W_{e_x}$.
  Observe that if $x\in A$, then for every $n$, we will eventually
  find the $N$ that works, and so the $n\th$ column of $W_{e_x}$ will
  be finite.  Hence in this case $e_x\in F$.  If $x\notin A$, then for
  some $n$, we will have to change our mind about $N$ infinitely
  often, and so the $n\th$ column of $W_{e_x}$ will be all of $\NN$.
  Hence in this case $e_x\notin F$.
\end{proof}

To prove that $E_1^{ce}\leq E_0^{ce}$, which surprised us, we will
build the necessary computable reduction.  This requires a detailed
construction.  To aid comprehension, we first present the basic module
for this construction, stated here as proposition
\ref{prop:sortof}. Note that this proposition does not provide a
reduction in the required sense, since the function $f$ depends on
both $i$ and $j$; nevertheless, understanding this proof will help the
reader follow the proof of the full Theorem \ref{thm:E0E1}.

\begin{prop}
  \label{prop:sortof}
  There exists a computable function $f$ such that, for every
  $i,j\in\omega$, we have
  $$ W_i~E_1~W_j \iff W_{f(i,j)}~E_0~W_{f(j,i)}.$$
\end{prop}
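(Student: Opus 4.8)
The plan is to build $W_{f(i,j)}$ one column at a time. Regarding $W_i$, $W_j$ and $W_{f(i,j)}$ all as subsets of $\NN\times\NN$, I would make the $n$-th column $W_{f(i,j)}^{[n]}$ depend only on the $n$-th columns $X:=W_i^{[n]}$ and $Y:=W_j^{[n]}$, through one fixed uniformly c.e.\ operator $\mathcal Q$: so $W_{f(i,j)}^{[n]}=\mathcal Q(X,Y)$, and by symmetry of the construction $W_{f(j,i)}^{[n]}=\mathcal Q(Y,X)$. Since $W_{f(i,j)}\triangle W_{f(j,i)}$ is then the disjoint union over $n$ of $\mathcal Q(W_i^{[n]},W_j^{[n]})\triangle\mathcal Q(W_j^{[n]},W_i^{[n]})$, it suffices to arrange (a) $\mathcal Q(X,Y)=\mathcal Q(Y,X)$ whenever $X=Y$, and (b) $\mathcal Q(X,Y)\triangle\mathcal Q(Y,X)$ finite and nonempty whenever $X\ne Y$; for then $W_{f(i,j)}\triangle W_{f(j,i)}$ is finite exactly when $W_i^{[n]}=W_j^{[n]}$ for all but finitely many $n$, i.e.\ exactly when $W_i\mathrel{E_1}W_j$.

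The operator I have in mind, writing $X_s,Y_s$ for stage-$s$ approximations and $\mu_s:=\min(X_s\triangle Y_s)$ (with $\mu_s=\infty$ when $X_s=Y_s$), is
\[\mathcal Q(X,Y)=\set{d\mid d\in X_s\text{ and }d\le\mu_s\text{ for some stage }s};\]
that is, a number $d$ is enumerated into $\mathcal Q(X,Y)$ as soon as it appears in $X$ at a stage by which $X$ and $Y$ still agree on every number below $d$. This is c.e.\ uniformly in indices for $X$ and $Y$, so $f$ is computable. The whole analysis rests on the elementary fact that $\mu_s\to\infty$ when $X=Y$ and $\mu_s\to\min(X\triangle Y)$ when $X\ne Y$.

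For (a), suppose $X=Y=A$. If $d\in A$ then eventually $d\in X_s\cap Y_s$ and, since $\mu_s\to\infty$, also $d\le\mu_s$, so $d$ enters both $\mathcal Q(X,Y)$ and $\mathcal Q(Y,X)$; if $d\notin A$ then $d$ never appears in $X$ or $Y$ and so lies in neither. Hence $\mathcal Q(X,Y)=A=\mathcal Q(Y,X)$. For (b), let $d^\ast=\min(X\triangle Y)$, say $d^\ast\in X\setminus Y$. Eventually $\mu_s=d^\ast$ with $d^\ast\in X_s$, so $d^\ast\in\mathcal Q(X,Y)$; but $d^\ast$ never appears in $Y$, so $d^\ast\notin\mathcal Q(Y,X)$, giving nonemptiness. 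For finiteness: every $d<d^\ast$ lies in $X\cap Y$ or in neither, and in the first case $\mu_s\to d^\ast>d$ forces $d$ into both $\mathcal Q(X,Y)$ and $\mathcal Q(Y,X)$, so no such $d$ is in the symmetric difference; and any $d>d^\ast$ that ever enters $\mathcal Q(X,Y)$ or $\mathcal Q(Y,X)$ must do so at a stage $s$ with $\mu_s\ge d>d^\ast$, i.e.\ a stage at which $X_s$ and $Y_s$ still agree at $d^\ast$, and since $d^\ast\notin Y$ this must be a stage before $d^\ast$ is enumerated into $X$ — and only finitely many numbers have appeared in $X$ or $Y$ by that stage. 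So $\mathcal Q(X,Y)\triangle\mathcal Q(Y,X)$ is contained in $\{d^\ast\}$ together with a finite set.

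The point requiring care — and the reason a naive reduction fails — is (a). One is tempted to let the $n$-th column of $W_{f(i,j)}$ simply copy $X$, but then corresponding columns would have symmetric difference $X\triangle Y$, which is merely finite, not empty, when $X\mathrel{E_0}Y$; that would at best reduce $E_3^{ce}$, which by Theorem~\ref{thm:e3} is impossible. What is really needed is to ``truncate'' each column at its first disagreement, and since that disagreement cannot be computably recognized, the operator above instead admits $d$ only while the approximations still agree below $d$, so that membership depends on the two sets only through the \emph{symmetric} quantity $\mu_s$ together with membership in the copied set. That is precisely what makes the two outputs literally identical, not just $E_0$-equivalent, on agreeing columns, however differently $W_i$ and $W_j$ enumerate them. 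Carrying this out together with the columnwise bookkeeping is the substance of the argument, and the same idea, now without access to $j$, will drive the proof of Theorem~\ref{thm:E0E1}.
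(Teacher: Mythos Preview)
Your proof is correct and follows essentially the same column-by-column strategy as the paper, using the least element $\mu_s$ of the current symmetric difference of approximations to control enumeration. Your operator $\mathcal Q(X,Y)=\{d:\exists s\,(d\in X_s\ \&\ d\le\mu_s)\}$ is in fact slightly cleaner than the paper's, which enumerates only the current least difference itself (when it lies in $X_s\setminus Y_s$) and then needs an additional catch-up clause referencing $W_{f(j,i)}$ to force agreement on columns where $X=Y$; your formulation gets $\mathcal Q(X,Y)=\mathcal Q(Y,X)=X$ there automatically.
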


\begin{proof}
  On inputs $i$ and $j$, let the output $f(i,j)$ be (a G\"odel code
  for) the program which enumerates the set $W_{f(i,j)}$ we now
  describe.  For each element $\langle c,k\rangle$ and each stage
  $s+1$, enumerate $\langle c,k\rangle$ into $W_{f(i,j),s+1}$ if the
  following hold:
  \begin{itemize}
  \item $\langle c,k\rangle\in W_{i,s}\smallsetminus W_{j,s}$; and
  \item $(\forall n<k)~[\langle c,n\rangle\in W_{i,s}\iff \langle
    c,n\rangle\in W_{j,s}]$.
  \end{itemize}
  Additionally, if $\langle c,k\rangle\in W_{f(j,i),s}\cap W_{i,s}\cap
  W_{j,s}$, then enumerate $\langle c,k\rangle$ into $W_{f(i,j),s+1}$.
  This is the entire construction.

  Now fix $c$, and suppose that $W_i$ and $W_j$ agree on their $c$\th\
  columns: $(\forall k)[\langle c,k\rangle\in W_i\iff\langle
  c,k\rangle\in W_j]$.  Then, if $\langle c,k\rangle$ ever entered
  $W_{f(i,j)}$, it did so either because it had already entered
  $W_{f(j,i)}$, or else because $\langle c,k\rangle\in
  W_{i,s}\smallsetminus W_{j,s}$, in which case it must later have
  entered $W_j$ as well (by their agreement on the $c$\th\ column),
  and therefore was then enumerated into $W_{f(j,i)}$ as well.  Thus
  $W_{f(i,j)}$ and $W_{f(j,i)}$ agree with each other on their $c$\th\
  columns.

  On the other hand, suppose $W_i$ and $W_j$ disagree on their $c$\th\
  columns, and let $\langle c,k\rangle$ be the least point lying in
  just one of them.  If $\langle c,k\rangle\in W_i\smallsetminus W_j$,
  then by its minimality, it will eventually be enumerated into
  $W_{f(i,j),s+1}$ at some stage $s+1$.  It will never appear in
  $W_{f(j,i)}$.  Moreover, the only numbers $k'$ such that $\langle
  c,k'\rangle$ can be enumerated into either $W_{f(i,j)}$ or
  $W_{f(j,i)}$ at stages $>s+1$ are those which either sat in
  $W_{f(j,i),s+1}$ or $W_{f(i,j),s+1}$ already, or else have $k'<k$,
  because once $\langle c,k\rangle$ has appeared in $W_{i,s+1}$, no
  $\langle c,k'\rangle$ with $k'>k$ could ever again be the least
  difference between the $c$\th\ columns of $W_i$ and $W_j$.
  Therefore, $W_{f(i,j)}$ and $W_{f(j,i)}$ do differ on at least one
  element of their $c$\th\ columns, but do not differ by more than
  finitely many elements.  (In fact, each of these columns is finite.)

  A symmetric argument holds when the least difference lies in
  $(W_j-W_i)$.  Therefore, if $W_i~E_1~W_j$, then each of the finitely
  many columns on which they differ contains only finitely many
  differences between $W_{f(i,j)}$ and $W_{f(j,i)}$, and none of the
  other columns contain any differences at all between them.  Thus
  $W_{f(i,j)}~E_0~W_{f(j,i)}$.  Conversely, if $W_i$ and $W_j$ differ
  on infinitely many columns, then every one of those columns contains
  at least one difference between $W_{f(i,j)}$ and $W_{f(j,i)}$ as
  well, and so $W_{f(i,j)}$ and $W_{f(j,i)}$ are not $E_0$-related.
  Thus we have proven Proposition \ref{prop:sortof}.
\end{proof}

Adapting this basic module to cover all sets $W_e$ at once, rather
than just two particular sets $W_i$ and $W_j$, is challenging, but it
can be done.  In the following, we write $W_e^c = W_e\cap\set{\langle
  c,n\rangle\mid n\in\omega}$ for the $c$\th\ column of any c.e.\ set
$W_e$.  Also, for each $c$, we will need infinitely many infinite
subsets $C^{cj}$ of $\omega$ on which to execute our construction.  To
avoid confusion, we refer to these sets as \emph{slices} of $\omega$,
not as columns.

\begin{thm}
  \label{thm:E0E1}
  $E_1^{ce} \leq E_0^{ce}$. Hence, $E_1^{ce}$ and $E_0^{ce}$ are
  computably bireducible.
\end{thm}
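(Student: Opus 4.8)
The plan is to globalize the basic module of Proposition~\ref{prop:sortof} so that a single computable function $g$ serves as the reduction, i.e.\ $W_e\mathrel{E_1}W_{e'}\iff W_{g(e)}\mathrel{E_0}W_{g(e')}$ simultaneously for all pairs of indices. The difficulty in Proposition~\ref{prop:sortof} is that $f(i,j)$ was allowed to consult both $i$ and $j$; now $g(e)$ may only look at $e$. The natural remedy is to have $W_{g(e)}$ carry, on disjoint slices $C^{cj}$ of $\omega$ indexed by a column $c$ and a ``rival index'' $j$, an encoding of the comparison between the $c$\th\ column of $W_e$ and the $c$\th\ column of $W_j$, mimicking the two-variable construction of the module but now for $W_e$ against every possible $W_j$ at once. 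The slice $C^{cj}$ inside $W_{g(e)}$ plays the role that $W_{f(e,j)}$ played, and the matching slice $C^{cj}$ inside $W_{g(j)}$ plays the role of $W_{f(j,e)}$; within that pair of slices we run exactly the module's enumeration rules, reading off $W_e$ and $W_j$ from their indices $e$ and $j$ (both of which are available to the respective programs $g(e)$ and $g(j)$, since $g(e)$ knows $e$ and ranges over all $j$, and symmetrically).

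First I would fix a computable partition $\set{C^{cj}\mid c,j\in\omega}$ of $\omega$ into infinite pieces, together with computable bijections of each $C^{cj}$ with $\omega$, so that we can speak of ``the $k$\th\ element of slice $C^{cj}$.'' Then I would define $W_{g(e)}$ by: for every $c$ and every $j$, run the Proposition~\ref{prop:sortof} construction with $i:=e$ on the slice $C^{cj}$, enumerating the $k$\th\ element of $C^{cj}$ into $W_{g(e)}$ under the two bulleted conditions there (with columns of $W_e$ and $W_j$ in place of $W_i$ and $W_j$), plus the symmetric ``catch-up'' clause referring to slice $C^{cj}$ of $W_{g(j)}$. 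This is uniformly computable in $e$, so $g$ is total computable.

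Next I would verify the two directions. If $W_e\mathrel{E_1}W_{e'}$, then $W_e$ and $W_{e'}$ agree on all but finitely many columns $c$; for each such agreeing $c$ and each $j$, the module analysis shows slice $C^{cj}$ of $W_{g(e)}$ and of $W_{g(e')}$ — wait, here one must be careful: the module compared $W_{f(i,j)}$ with $W_{f(j,i)}$, i.e.\ it swapped the roles of the two indices, whereas we need $W_{g(e)}$ versus $W_{g(e')}$ with the \emph{same} rival index $j$. The correct bookkeeping is that slice $C^{cj}$ of $W_{g(e)}$ should be compared against slice $C^{ce}$ of $W_{g(j)}$, not against the like-named slice of another program. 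So the symmetric difference $W_{g(e)}\triangle W_{g(e')}$ must instead be analyzed column-block by column-block using the relation between $(W_e,W_j)$ and $(W_{e'},W_j)$ for each $j$; the cleanest fix is to route all of $e$'s comparisons through slices indexed so that $W_{g(e)}$'s slice for ``column $c$, rival $e'$'' is literally the partner of $W_{g(e')}$'s slice for ``column $c$, rival $e$,'' and to note that only finitely many columns $c$ are in play and for each the module guarantees at most finitely many (in fact finitely many) differences, while a disagreeing column forces a difference in the corresponding slice. Conversely, if $W_e$ and $W_{e'}$ differ on infinitely many columns $c$, then for each the partner slices differ in at least one element, and since these lie in disjoint slices the total symmetric difference is infinite, so $\neg(W_{g(e)}\mathrel{E_0}W_{g(e')})$.

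The main obstacle I anticipate is exactly this indexing/symmetry issue: arranging the slices and the ``catch-up'' clauses so that $W_{g(e)}$ and $W_{g(e')}$ interact through genuinely paired slices (one must reconcile the role-swap built into the module with the requirement that $g$ be a single one-variable function), and then confirming that an agreeing column contributes only finitely many differences across \emph{all} rivals $j$ relevant to the pair $(e,e')$ — which it does, because only the single rival $j=e'$ (resp.\ $j=e$) governs the $W_{g(e)}$-versus-$W_{g(e')}$ comparison, so the finiteness from the module applies directly. Once the slice bookkeeping is pinned down, the verification is a routine adaptation of the three paragraphs in the proof of Proposition~\ref{prop:sortof}. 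Combined with $E_0^{ce}\leq E_1^{ce}$ from Proposition~\ref{prop:Borel}, this yields the claimed bireducibility.
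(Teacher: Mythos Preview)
Your proposal has a genuine gap, and it is precisely the ``indexing/symmetry issue'' you flag but do not resolve.  In your construction, on the slice $C^{cj}$ the set $W_{g(e)}$ records the module's output for the pair $(W_e,W_j)$, while $W_{g(e')}$ records the output for $(W_{e'},W_j)$.  To check $W_{g(e)}\mathrel{E_0}W_{g(e')}$ you must compare these two sets on the \emph{same} slice $C^{cj}$ for \emph{every} $j$, not just for the rivals $j=e'$ and $j=e$; the claim that ``only the single rival $j=e'$ governs'' is simply false.  The difficulty is that the module's output depends on the stage-by-stage enumerations $W_{e,s}$, not merely on the final set $W_e$.  So even when $W_e^c=W_{e'}^c$ (indeed, even when $W_e=W_{e'}$ as sets but with different indices), the module run on $(e,j)$ and on $(e',j)$ can produce distinct finite subsets of $C^{cj}$.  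With infinitely many pairs $(c,j)$ available, these finite discrepancies accumulate to an infinite symmetric difference, so $W_{g(e)}\not\mathrel{E_0}W_{g(e')}$ even though $W_e\mathrel{E_1}W_{e'}$.  The ``catch-up'' clause of the module does not help here, since it is designed to reconcile $W_{f(i,j)}$ with $W_{f(j,i)}$, not $W_{f(e,j)}$ with $W_{f(e',j)}$.

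The paper's construction avoids this by abandoning the idea of running the two-variable module independently on each pair.  Instead it uses the natural order on indices: slice $C^{cj}$ is dedicated to detecting whether $W_j^c$ is new relative to all $W_i^c$ with $i<j$.  A single distinguished element $x^{cj}=\lim_s x^{cj}_s$ (when it exists) is the \emph{only} element of $C^{cj}$ on which any two sets $W_{g(m)}$, $W_{g(n)}$ can disagree; every other element of $C^{cj}$ lies in all $W_{g(k)}$ or in none.  Membership of $x^{cj}$ in $W_{g(k)}$ is then decided by a rule depending on how $W_k^c$ compares to the finitely many minima $m^{cij}$ for $i<j$, a rule which is invariant under replacing $k$ by any $k'$ with $W_{k'}^c=W_k^c$ (at least once $c\geq k,k'$).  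This invariance is exactly what your slice-by-slice module lacks, and it is what makes Lemmas~\ref{lemma:findiff}--\ref{lemma:goodcols2} go through.
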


\begin{proof}
%
%
%
  We define a computable function $g$ implicitly by building the c.e.\
  sets $W_{g(i)}$ uniformly for $i\in\omega$.  Here we give some
  intuition for the construction, before presenting it in full.  The
  slice $C^{cj}$ is dedicated to making sure that, if the set $W_j$
  differs on column $c$ from all the sets $W_i$ with $i<j$, then there
  should be at least one difference between $W_{g(j)}$ and each
  $W_{g(i)}$.  At each stage $s+1$ in the construction, we will
  consider all $c,j\in\omega$ and all $i<j$, and define two auxiliary
  elements: $m^{cij}_s$, which represents the least element of the
  $c$\th\ column $\omega^c$ which lies in the symmetric difference of
  $W_{i,s}$ and $W_{j,s}$, and $x^{cj}_s\in C^{cj}$, the element we
  are currently using to distinguish $W_{g(i)}$ from $W_{g(j)}$ on the
  slice $C^{cj}$.  The former will converge to the least element of
  $(W_i \mathbin{\triangle} W_j)\cap \omega^c$, or diverge if this set
  is empty.  Each time it changes, the current $x^{cj}_s$ enters
  $W_{g(i)}\cap W_{g(j)}$ and a new $x^{cj}_{s+1}$ is chosen and added
  to $W_{g(j)}$.  The same will happen occasionally on behalf of some
  $k>j$, as described below, but in the end, $W_{g(i)}$ and $W_{g(j)}$
  will agree on the slice $C^{cj}$ if and only if $W_i$ and $W_j$
  agree on the column $\omega^c$.

  We also must ensure that $(W_{g(i)}\mathbin{\triangle} W_{g(j)})\cap
  C^{cj}$ is finite, since $W_j$ and some $W_i$ might be equal on all
  other columns (hence might be $E_1$-related).  The $C^{cj}$ slice is
  not intended to differentiate any $W_i$ and $W_{i'}$ with $i'<i<j$;
  indeed, we want to have $W_{g(i)}\cap C^{cj}=W_{g(i')}\cap C^{cj}$
  for all such $i$ and $i'$, leaving it to $C^{ci}$ to differentiate
  between them if necessary.  Moreover, if any $i<j$ has
  $W_i^c=W_j^c$, then we will let $C^{ci}$ do the job, and
  $W_{g(j)}\cap C^{cj}$ will be equal to $W_{g(k)}\cap C^{cj}$ for
  \emph{every} $k$, so that we do not repeat the process of adding
  differences between the same sets.

  A set $W_k$ with $k>j$ might equal $W_j$ on the $c$\th\ column, or
  might equal some $W_i$ with $i<j$ there.  In the former case we
  would want to build $W_{g(k)}$ to be equal on $C^{cj}$ to
  $W_{g(j)}$, but in the latter case we would want it equal to
  $W_{g(i)}$ instead -- which could pose difficulties if $W_{g(j)}\neq
  W_{g(i)}$ on $C^{cj}$.  Our solution is to use the least difference
  (if any) between $W_i^c$ and $W_j^c$ to guess which is the case.  If
  this minimum element $m^{cij}=\lim_s m^{cij}_s$ lies in both $W_j$
  and $W_k$, for instance, then we can be sure that $W_k^c\neq W_i^c$,
  and if similar outcomes hold for each minimum (for every $i<j$),
  then we will make $W_{g(k)}$ look like $W_{g(j)}$ on $C^{cj}$.  On
  the other hand, if there is some minimum element which shows that
  $W_k^c\neq W_j^c$, then on this slice we will make $W_{g(k)}$ look
  like the sets $W_{g(i)}$ (which are the same on this slice for all
  $i<j$).  We do this for only finitely many $k>j$, namely those $\leq
  c$, to avoid having to add infinitely many elements to the sets
  $W_{g(j)}$ and $W_{g(i)}$, for that could wipe out the difference we
  would like to build between those sets.  For each single $k$, there
  are only finitely many values $j$ and columns $c$ for which this
  restriction will stop $W_k$ from being considered in the
  construction of $W_{g(k)}$ on $C^{cj}$; whenever either $j\geq k$ or
  $c>k$, $W_k$ will be used in that construction.  On the finitely
  many slices $C^{cj}$ in which it was not considered, $W_{g(k)}$ will
  have only a finite difference from any other set $W_{g(n)}$ anyway,
  which will have no effect on the question of whether
  $W_{g(k)}~E_0~W_{g(n)}$.

  For the slice $C^{cj}$, there are two basic outcomes possible.
  First, suppose some $i<j$ satisfies $W_i^c=W_j^c$.  Then there will
  be infinitely many stages $s$ at which $m^{cij}_{s+1}$ is either
  $\neq m^{cij}_s$ or undefined (if the symmetric difference is empty
  at that stage).  At every one of these stages, the current
  $x^{cj}_s$, which already sat in $W_{g(j)}$, will be added to every
  set $W_{g(k)}$ with $k\neq j$, and the newly chosen $x^{cj}_{s+1}$
  will be added to $W_{g(j)}$.  Since this happens infinitely often,
  we wind up with $C^{cj}\subseteq W_{g(k)}$ for every $k\in\omega$.

  On the other hand, suppose every $i<j$ has $W_i^c\neq W_j^c$.  Then
  every sequence $\langle m^{cij}_s\rangle_{s\in\omega}$ will converge
  to a limit $m^{cij}$, the minimum of $W_i^c\mathbin{\triangle}
  W_j^c$.  Once we reach a stage $s_0$ at which all
  $W_{i,s_0}\seqrestr{m^{cij}+1}=W_i\seqrestr{m^{cij}+1}$ and also
  $W_{j,s_0}\seqrestr{m^{cij}+1}=W_j\seqrestr{m^{cij}+1}$ for all $i$,
  the values $m^{cij}_s$ will never change again, and therefore will
  never cause $x^{cj}_{s_0}$ to enter any $W_{g(i)}$.  The indices $k$
  with $j<k\leq m$ (if there are any) may yet cause this element to
  enter the sets $W_{g(i)}$.  There are only finitely many such $k$,
  however, and each one causes this to happen at no more than one
  stage after $s_0$ (namely, the unique stage $s+1$, if one exists,
  such that $W_{j,s}^c$ and $W_{k,s}^c$ agree on the set of minima
  $\set{m^{cij}\mid i<j}$, but $W_{j,s+1}^c$ and $W_{k,s+1}^c$ fail to
  agree on that set).  Therefore, in this second outcome, there will
  exist a limit $x^{cj}=\lim_s x^{cj}_s$, which will lie in
  $W_{g(j)}$, will not lie in any $W_{g(i)}$ with $i<j$, and will lie
  in $W_{g(k)}$ (for $j<k\leq m$) if and only if $W_k$ and $W_j$
  contain exactly the same elements from the set of minima.  Moreover,
  this $x^{cj}$ will be the only element of $C^{cj}$ on which
  $W_{g(j)}$ differs from any set $W_{g(k)}$.  So in this case we have
  accomplished the goal of establishing a single difference on the
  slice $C^{cj}$ between $W_{g(i)}$ and $W_{g(j)}$ for each $i<j$,
  while not differentiating $W_{g(i)}$ from $W_{g(i')}$ on this slice
  for any $i'<i<j$.  The point of our treatment of the sets $W_{g(k)}$
  with $j<k\leq m$ was discussed above, and will appear below in Lemma
  \ref{lemma:goodcols}.  (The sets $W_{g(k)}$ with $k>m$ agree on
  $C^{cj}$ with all $W_{g(i)}$ for $i<j$, and hence differ by at most
  the element $x^{cj}$ from $W_{g(j)}$.)

  Now we give the formal construction.  At stage $0$, every element
  $m^{cij}_0$ is undefined, and $x^{cj}_0$ is the least element of the
  slice $C^{cj}$.  We also enumerate $x^{cj}_0$ into the set
  $W_{g(j),0}$ (which as yet contains nothing other than this
  element).

  At stage $s+1$, for each fixed $j$ and $c$ and for every $i<j$, we
  find the least element $m^{cij}_{s+1}$ (if any) of the symmetric
  difference $(W_{i,s}^c\mathbin{\triangle} W_{j,s}^c)$.  Suppose
  first that there exists either an $i<j$ such that this symmetric
  difference is empty, or an $i<j$ such that $m^{cij}_{s+1}\neq
  m^{cij}_s$.  Then we enumerate the $x^{cj}_s$ into every slice
  $W_{g(k),s+1}$, (we shall see that it was already in $W_{g(j),s}$),
  define $x^{cj}_{s+1}$ to be the next-smallest element of $C^{cj}$,
  and enumerate this $x^{cj}_{s+1}$ into $W_{g(j),s+1}$.

  On the other hand, suppose that for every $i<j$, $m^{cij}_{s+1}$ and
  $m^{cij}_s$ are defined and equal to each other.  In this case we
  consider each $k$ with $j<k\leq c$.  (If $j\geq c$, we do nothing.)
  For each such $k$ in turn, we ask whether the following fact holds
  at this stage $s+1$:
  \[(\forall i<j) [m^{cij}_s \in W_{k,s+1}\iff m^{cij}_s\in W_{j,s+1}]\;,
  \]
  and also whether the same fact held at the preceding stage $s$:
  \[(\forall i<j) [m^{cij}_s \in W_{k,s}\iff m^{cij}_s\in W_{j,s}]\;.
  \]
  If for some $k$ the fact held at the preceding stage but fails to
  hold now, then $x^{cj}_s$ already lies in all such slices
  $W_{g(k),s}$, and in this case we enumerate $x^{cj}_s$ into every
  $W_{g(i),s+1}$ for every $i$, define $x^{cj}_{s+1}$ to be the
  next-smallest element of $C^{cj}$, and enumerate this $x^{cj}_{s+1}$
  into $W_{g(j),s+1}$.  If there is no such $k$, we keep
  $x^{cj}_{s+1}=x^{cj}_s$.

  Finally, for every $k$ for which the fact holds at the current stage
  and failed to hold at the preceding stage, we enumerate into
  $W_{g(k),s+1}$ the element $x^{cj}_{s+1}$ chosen above.  This
  completes the construction.

  To show that the computable function $g$ defined by this
  construction actually constitutes a reduction from $E_1^{ce}$ to
  $E_0^{ce}$, we prove a series of lemmas. 

\begin{lem}
  \label{lemma:findiff}
  For every $c$, $j$, $m$ , and $n$, the sets $W_{g(m)}\cap C^{cj}$
  and $W_{g(n)}\cap C^{cj}$ differ on at most the element
  $x^{cj}=\lim_s x_s^{cj}$, if this limit exists.  If $W_i^c=W_j^c$
  for some $i<j$, then $x^{cj}$ does not exist, and the two sets above
  are equal.
\end{lem}

\begin{proof}
  At each stage $s$, on the slice $C^{cj}$, these two sets differ on
  at most $x^{cj}_s$, which enters $W_{g(j)}$ at one stage, might
  possibly enter certain sets $W_{g(k)}$ with $j<k\leq m$ at a
  subsequent stage, and then enters all other sets $W_{g(m)}$ when and
  if a new $x^{cj}_{s+1}$ is chosen.  No elements of $C^{cj}$ ever
  enter any of these sets except those chosen at some stage $t$ as
  $x^{cj}_t$.  So the lemma is clear.
\end{proof}

\begin{lem}
  \label{lemma:diffexists}
  For all triples $n<m<c$ with $W_m^c\neq W_n^c$ there is some $j\leq
  m$, the slice $C^{cj}$ has a limit element $x^{cj}\in
  W_{g(n)}\mathbin{\triangle} W_{g(m)}$.
\end{lem}

\begin{proof}
  If every $i<m$ satisfies $W_i^c\neq W_m^c$, then we simply take
  $j=m$, and the above analysis shows that $x^{cj}\in C^{cj}$ exists
  and lies in $W_{g(j)}$ (that is, in $W_{g(m)}$), but not in
  $W_{g(n)}$, since $n<j$.

  Otherwise, choose the least $m'<m$ with $W_{m'}^c=W_m^c$ and the
  least $n'\leq n$ with $W_{n'}^c=W_n^c$.  Let $i=\min(m',n')$ and
  $j=\max (m',n')$; thus $i<j$.  (With $W_{n'}^c=W_n^c\neq W^c_{m}=
  W_{m'}^c$, we know $i\neq j$.)
%
  Now no $i'<j$ satisfies $W_j^c=W_{i'}^c$, so there exists a limit
  element $x^{cj}$ which lies in $W_{g(j)}$ but not in $W_{g(i)}$.

  Suppose first that $W_m^c=W_j^c$.  Then we have $j\leq m< c$, and so
  eventually $W_m$ and $W_j$ agree on all minima $m^{ci'j}$, for all
  $i'<j$.  At every subsequent stage, $W_{g(m)}\cap C^{cj}$ will equal
  $W_{g(j)}\cap C^{cj}$, so in particular $x^{cj}\in W_{g(m)}$.  Now
  if $n\leq j$, then $n<j$ because $W_j^c=W_m^c\neq W_n^c$ and so
  automatically $x^{cj}$ will not lie in $W_{g(n)}$.  On the other
  hand, if $j<n$, then with $W_n^c=W_i^c$ and $i<j$, $m^{cij}$ must
  lie in $W_n^c\mathbin{\triangle} W_j^c$.  Once $m^{cij}$ has
  appeared in one of these sets, $W_{g(n)}$ will fail to contain
  $x^{cj}_s$ at all subsequent stages $s$, and therefore will fail to
  contain the limit $x^{cj}$.  Thus $x^{cj}$ lies in
  $W_{g(m)}\smallsetminus W_{g(n)}$, giving the difference we desired.

  The exact same argument holds if $W_n^c=W_j^c$, only with $m$ and
  $n$ reversed.  So we have proven the lemma.
\end{proof}

We can now establish one direction of Theorem~\ref{thm:E0E1}, namely
that if $W_m\not\mathrel{E}_1W_m$ then
$W_{g(m)}\not\mathrel{E}_0W_{g(n)}$.  Indeed, if $W_m$ and $W_n$
differ on infinitely many columns $c$, then they differ on infinitely
many columns $c$ with $c>\max (m,n)$.  For each such $c$,
Lemma~\ref{lemma:diffexists} gives a $j$ and an element $x^{cj}\in
C^{cj}$ on which $W_{g(m)}$ and $W_{g(n)}$ differ, as desired.  It
remains to prove the converse, namely that if $W_m\mathrel{E}_1W_n$
then $W_{g(m)}\mathrel{E}_0W_{g(n)}$.  To begin with, we consider the
columns $c$ on which they agree.

\begin{lem}
  \label{lemma:goodcols}
  Fix $m<n$ and $c\geq n$.  If $W_m^c=W_n^c$, then for all
  $j\in\omega$ we have $W_{g(m)}\cap C^{cj} = W_{g(n)}\cap C^{cj}$.
\end{lem}

\begin{proof}
  Of course, if $W^c_j=W^c_i$ for some $i<j$, then every $W_{g(k)}$
  contains all of $C^{cj}$, and we are done.  So assume that there
  exists no such $i<j$, and that therefore the limit $x^{cj}$ is
  defined.  We consider all possible values of $j$ relative to $m$ and
  $n$.

  If $j=n$, then there is an $i<j$ (namely $i=m$) with $W_i^c=W_j^c$,
  and so $W_{g(n)}\cap C^{cj}= W_{g(m)}\cap C^{cj}=C^{cj}$.

  If $j=m$, then $W_n^c$ agrees with $W_j^c$ on all minima $m^{cij}$
  with $i<j$, and so $W_{g(j)}$ and $W_{g(n)}$ both contain $x^{cj}$,
  and therefore are equal on $C^{cj}$.  (This uses the fact that
  $n\leq c$.)

  If $j<m$, then $W_m^c$ and $W_n^c$ either both agree with $W_j^c$ on
  all minima, or both disagree with it on some particular minimum.  In
  the former case, they both contain $x^{cj}$, while in the latter
  case they both omit it.  Either way they both are equal, by Lemma
  \ref{lemma:findiff}.

  If $j>n$, then automatically $W_{g(m)}\cap C^{cj}= W_{g(n)}\cap
  C^{cj}$, because for all $i<j$, $W_{g(i)}$ contains exactly those
  elements of $C^{cj}$ which are $<x^{cj}$.

  Finally, suppose $m<j<n$.  If $W_j^c\neq W_i^c$ for every $i<j$,
  then in particular $W_j^c\neq W_m^c=W_n^c$.  So $x^{cj}$ fails to
  lie in $W_m^c$ (since $m<j$) and also fails to lie in $W_n^c$
  (because they differ on the minimum $m^{cmj}$).  By Lemma
  \ref{lemma:findiff}, therefore, $W_{g(m)}$ and $W_{g(n)}$ agree on
  $C^{cj}$.
\end{proof}

\begin{lem}
  \label{lemma:goodcols2}
  Fix $m<n$, and assume $W_m^c=W_n^c$.  If $W_{g(m)}\cap C^{cj} \neq
  W_{g(n)}\cap C^{cj}$, then $c<n$ and $j\leq n$, and the symmetric
  difference $(W_{g(m)}\cap C^{cj}) \mathbin{\triangle} (W_{g(n)}\cap
  C^{cj})$ is finite.
\end{lem}

\begin{proof}
  That $c<n$ follows from Lemma \ref{lemma:goodcols}.  Moreover, we
  know that $W_{g(i)}\cap C^{cj}=W_{g(i')}\cap C^{cj}$ for every
  $i<i'<j$, and if $n<j$, then this applies to $m$ and $n$.  Finally,
  Lemma \ref{lemma:findiff} shows that the symmetric difference of the
  two sets contains at most one element.
\end{proof}

\begin{lem}
  \label{lem:finite1}
  Suppose $W_m~E_1~W_n$, and let the set $C$ be the union of all those
  $C^{cj}$ with $j\in\omega$ and $W_m^c=W_n^c$.  Then $(W_{g(m)}\cap
  C)$ and $(W_{g(n)}\cap C)$ differ by at most finitely many elements.
\end{lem}

\begin{proof}
  By Lemma \ref{lemma:goodcols2}, the difference is contained within
  finitely many slices $C^{cj}$, and that difference is finite (in
  fact, at most a single element) on each of those finitely many
  $C^{cj}$.
\end{proof}

So, for $m<n$ with $W_m~E_1~W_n$, it remains to consider $W_{g(m)}$
and $W_{g(n)}$ on those slices $C^{cj}$ with $W_m^c\neq W_n^c$.  There
are only finitely many such $c$, but there are infinitely many
corresponding $j$.  However, all but finitely many of these $j$
satisfy $m<n<j$, and for all those $j$, we know that $W_{g(m)}\cap
C^{cj} =W_{g(n)}\cap C^{cj}$.  On the finitely many remaining sets
$C^{cj}$, there may be a difference, but only a finite difference, by
Lemma \ref{lemma:findiff}. Thus, whenever $W_m~E_1~W_n$, we must have
$W_{g(m)}~E_0~W_{g(n)}$. This completes the proof of Theorem
\ref{thm:E0E1}.
\end{proof}

Using Proposition \ref{prop:Borel} we also conclude that $E_1^{ce}\leq
E_2^{ce}$ and $E_1^{ce}\leq E_3^{ce}$.  The remaining questions in
establishing a version of Figure \ref{fig:ei} for computable
reducibility are whether $E_2^{ce}\leq E_1^{ce}$ and whether either
$E_{set}^{ce}$ or $Z_0^{ce}$ is $\leq E_3^{ce}$.

\section{Below equality}

In this section, we add to the collection of known
equivalence relations on c.e.\ sets which either lie properly below
$=^{ce}$, or else are incomparable with $=^{ce}$, in the computable
reducibility hierarchy.  This is a departure from the Borel theory,
where Silver's theorem implies that $=$ is continuously reducible to
every nontrivial Borel equivalence relation.  On the other hand, the
situation is not entirely unfamiliar, being similar to that for other
very weak reducibility notions (see for instance \cite{buss} for the
case of PTIME reducibility).  Indeed, all computably enumerable
equivalence relations $E$ on $\omega$ are computably reducible
to $=^{ce}$, just by letting $W_{f(e)}=\{i~:~\la e,i\ra\in E\}$,
and so the work done in \cite{sorbi} and \cite{gerdes} all takes
place within this realm.

We begin with the following examples, each of which is a simple (but
not c.e.) equivalence relation.

\begin{defn}\
  \label{def:minmax}
  \begin{itemize}
  \item Let $e\mathrel{E}_\smin^{ce} e'$ if and only if
    $\min(W_e)=\min(W_{e'})$ or $W_e=W_e'=\emptyset$.
  \item Let $e\mathrel{E}_\smax^{ce} e'$ if and only if either
    $W_e,W_{e'}\neq\emptyset$ and $\max(W_e)=\max(W_{e'})$, or
    $W_e=W_{e'}=\emptyset$, or $\abs{W_e}=\abs{W_{e'}}=\aleph_0$.
  \end{itemize}
\end{defn}

It is easy to see that these relations are computably reducible to
$=^{ce}$.  Indeed, to show $E_\smin^{ce}\leq\mathord{=}^{ce}$, given a
program $e$ we saturate $W_e$ upwards by letting $f(e)$ enumerate the
set $\set{n\in\NN\mid\exists l\in W_e(l\leq n)}$.  Similarly, to show
$E_\smax^{ce}\leq\mathord{=}^{ce}$, we saturate downwards with the
program $f(e)$ that enumerates the set $\set{n\in\NN\mid\exists l\in
  W_e(l\geq n)}$. In both cases, these functions are computable
selectors, in the sense that $W_{f(e)}$ is $E_\smin^{ce}$ or
$E_\smax^{ce}$ equivalent to $W_e$ and constant on these classes.

\begin{rem}
  It is worth mentioning that $E_\smin^{ce}$ and $E_\smax^{ce}$ each
  admit another simple description.  Namely, $E_\smin^{ce}$ is
  computably bireducible with the relation $e\mathrel{E}_\sgcd^{ce}e'$
  if and only if $\gcd(W_e)=\gcd(W_{e'})$.  (Here $\gcd(S)$ is the
  greatest $n$ which divides every element of $S$, with
  $\gcd(\emptyset)=\infty$.)  Indeed, to show $E_\smin^{ce}\leq
  E_\sgcd^{ce}$, we let $W_{f(e)}=\set{n!\mid n\in W_e}$; while to
  show $E_\sgcd^{ce}\leq E_\smin^{ce}$, we let
  $W_{f(e)}=\set{d<\infty\mid\exists s(d=\gcd(W_{e,s}))}$.  Similarly,
  $E_\smax^{ce}$ is bireducible with the relation
  $e\mathrel{E}_\slcm^{ce}e'$ if and only if $\lcm(W_e)=\lcm(W_{e'})$.
\end{rem}

The next result gives our first example of a violation of Silver's
theorem in the computable context.

\begin{thm}
  \label{thm:maxtomin}
  $E_\smax^{ce}$ is not computably reducible to $E_\smin^{ce}$.
  Consequently, $E_\smin^{ce}$ lies properly below $=^{ce}$.
\end{thm}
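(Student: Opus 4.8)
The plan is to show that no computable $f$ can reduce $E_\smax^{ce}$ to $E_\smin^{ce}$, by a finite-injury-style diagonalization against all potential reductions, combined with a careful analysis of what a reduction would have to do on the ``all-but-finitely-many'' structure of the $E_\smax^{ce}$-classes. The key asymmetry to exploit is that $\min$ is \emph{finitely determined}: once an element $l$ has been enumerated into $W_e$, the value $\min(W_e)$ can only decrease, and it is pinned down as soon as the least element appears. By contrast, $\max(W_e)$ is \emph{never} determined by a finite stage for a nonempty finite set, and the ``$\abs{W_e}=\aleph_0$'' clause of $E_\smax^{ce}$ lets finite sets and infinite sets interact: I can build a set whose maximum I keep raising, deferring forever the decision of whether it is finite (so in some large $E_\smax$-class) or infinite (so in the single infinite class). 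A reduction $f$ would be forced to commit, at finite stages, to a guess about $\min(W_{f(e)})$, and I can refute each such commitment.

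Concretely, first I would fix a computable $f$ and, using the recursion theorem, build a c.e.\ set $W_e$ under my control, watching $W_{f(e)}$. I start $W_e = \{0\}$, so $\max(W_e)=0$ and $W_e$ currently looks like it is in the class $[\,\{0\}\,]_{E_\smax^{ce}}$. Meanwhile I watch $W_{f(e)}$ until it enumerates its least element, say $\min(W_{f(e),s})=p$ at stage $s$ (if it never does, then $W_{f(e)}=\emptyset$ and I can separately diagonalize by choosing $W_e$ to be nonempty and not cofinal, a different $E_\smin$-value, contradiction). Once $p$ appears, I raise $\max(W_e)$: I put a new large number $n_1 > 0$ into $W_e$. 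Now $W_e$ has moved to a \emph{different} $E_\smax^{ce}$-class (its maximum changed from $0$ to $n_1$), so $W_{f(e)}$ must change its $\min$ away from $p$ — but it cannot, since $p$ is already enumerated. The only escape for $f$ is if $W_{f(e)}$ ends up empty or infinite-with-some-matching... no: the $E_\smin^{ce}$ relation has no ``infinite'' escape clause (unlike $E_\smax^{ce}$), so $\min(W_{f(e)})=p$ is now permanent, and $f$ has failed to track the class change of $W_e$. To make this airtight across all possibilities I would set up the construction to either (i) catch $f$ committing to a minimum prematurely as above, or (ii) force $W_e$ to be genuinely infinite while $W_{f(e)}$ is driven to be finite with a fixed minimum, again a mismatch because the infinite $E_\smax$-class is a single class that must map to a single $E_\smin$-class, yet I can produce two infinite sets $W_{e_1}, W_{e_2}$ (both in that one class) with $W_{f(e_1)}, W_{f(e_2)}$ having different, already-committed minima.

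The cleanest route is probably the latter: build \emph{two} sets $W_{e_1}$ and $W_{e_2}$, both of which I will make infinite (hence $E_\smax^{ce}$-equivalent, both in the $\aleph_0$-class), but arrange that along the construction $W_{f(e_1)}$ is forced to settle on a minimum $p_1$ and $W_{f(e_2)}$ on a different minimum $p_2$. I do this by: running $W_{e_1} = W_{e_2} = \{0\}$ until both $W_{f(e_1)}$ and $W_{f(e_2)}$ reveal least elements $p_1, p_2$; if $p_1 \neq p_2$ I am already done once I confirm both sets can be made infinite without $f$ escaping (since $\min$ is permanent, making them infinite only confirms they are $E_\smax^{ce}$-equivalent while $W_{f(e_1)} \not\mathrel{E_\smin^{ce}} W_{f(e_2)}$); if $p_1 = p_2 =: p$, I diverge the two sets — keep $W_{e_1}$ small (put $n_1$ in) and $W_{e_2}$ also growing but differently — to force one of the $W_{f(e_i)}$ to try to move its minimum, which it cannot, giving the contradiction directly. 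I expect the main obstacle to be the bookkeeping that ensures, in the branch where I eventually make $W_{e_1}$ and $W_{e_2}$ infinite, that $f$ never gets a legitimate opportunity to enumerate a \emph{new smaller} element into $W_{f(e_i)}$ that would rescue the reduction — i.e., I must time the ``raise the max / go infinite'' actions to happen only \emph{after} $f$ has irrevocably committed. This timing is exactly where the finite-determinacy of $\min$ versus the finite-indeterminacy of $\max$ does the work, and making it fully rigorous is the one step that needs care; the consequence ``$E_\smin^{ce} <\, =^{ce}$'' is then immediate, since $E_\smin^{ce} \leq\, =^{ce}$ was shown above while $E_\smax^{ce} \leq\, =^{ce}$ but $E_\smax^{ce} \not\leq E_\smin^{ce}$, so the reduction $=^{ce} \leq E_\smin^{ce}$ cannot exist.
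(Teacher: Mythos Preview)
Your approach is quite different from the paper's, which gives a one-line complexity argument: $E_\smin^{ce}$ is $\Delta^0_2$ (one can express ``same minimum or both empty'' both in $\Pi^0_2$ and in $\Sigma^0_2$ form), whereas $E_\smax^{ce}$ is $\Pi^0_2$-complete, since already the single class $\mathrm{INF}=\{e : |W_e|=\aleph_0\}$ is $\Pi^0_2$-complete. A computable reduction from $E_\smax^{ce}$ to $E_\smin^{ce}$ would make $E_\smax^{ce}$ a $\Delta^0_2$ relation, which is impossible.

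Your direct diagonalization, as written, has a genuine gap. The load-bearing claim---stated in both versions of the argument---is that once some $p$ has been enumerated into $W_{f(e)}$, ``$\min(W_{f(e)})=p$ is now permanent.'' This is false: the program $f(e)$ is perfectly free to enumerate an element smaller than $p$ later, strictly decreasing the minimum. You acknowledge that ensuring $f$ ``never gets a legitimate opportunity to enumerate a new smaller element'' is ``the one step that needs care,'' and propose to \emph{time} your max-raising moves so they occur only ``after $f$ has irrevocably committed.'' But at no finite stage has $f$ irrevocably committed to a minimum, so there is no such moment to wait for.

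The asymmetry you are pointing at is real, and a repair is available: $\min(W_{f(e)})$ is a natural number, so it can strictly decrease only finitely often. One can exploit this well-foundedness---e.g., raise $\max(W_e)$ each time the observed minimum of $W_{f(e)}$ drops, and argue that this process terminates---but turning that into an actual pair of indices on which $f$ provably fails still takes work you have not supplied (in particular, you need a second index to compare against whose image's minimum is pinned down as well). The paper's complexity argument bypasses all of this. Your derivation of the ``consequently'' clause is fine.
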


\begin{proof}
  This holds for the simple reason that $E_\smin^{ce}$ is
  $\Delta^0_2$, while $E_\smax^{ce}$ is $\Pi^0_2$ complete.  To see
  that $E_\smin^{ce}$ is $\Delta^0_2$, observe that $W_e$ and $W_{e'}$
  have the same minimum if and only if for every $n$ in $W_e$ there
  exists an $m\leq n$ in $W_{e'}$ and vice versa; and also if and only
  if there exists $n$ in $W_e$ and $W_{e'}$ such that every $m\leq n$
  is in neither $W_e$ nor $W_{e'}$ (or both are empty).  To see that
  $E_\smax^{ce}$ is $\Pi^0_2$ complete, note that by
  \cite[Theorem~IV.3.2]{soare}, even the $E_\smax$ class
  $\mathrm{INF}=\set{e:\abs{W_e}=\aleph_0}$ is $\Pi^0_2$ complete.
\end{proof}

In fact, $E_\smax^{ce}$ and $E_\smin^{ce}$ are \emph{incomparable} up
to computable reducibility, and hence both lie properly below
$=^{ce}$.  However, the proof that $E_\smin^{ce}$ is not computably
reducible to $E_\smax^{ce}$ is slightly more difficult.  For this, we
require the monotonicity lemma, a key result which will be used a
number of times over the next few sections.  Before stating it, we
need the following terminology.

\begin{defn}\
  \begin{itemize}
  \item We say that $f\from\NN\into\NN$ is \emph{well-defined on c.e.\
      sets} if $W_e=W_{e'}$ implies $W_{f(e)}=W_{f(e')}$ (in other
    words, $f$ is a homomorphism from $=^{ce}$ to $=^{ce}$).
  \item We say that such a function $f$ is \emph{monotone} if
    $W_e\subset W_{e'}$ implies $W_{f(e)}\subset W_{f(e')}$.
  \end{itemize}
\end{defn}

\begin{lem}[Monotonicity lemma]
  \label{lem:monotonicity}
  If $f\from\NN\into\NN$ is computable and well-defined on c.e.\ sets,
  then $f$ is monotone.
\end{lem}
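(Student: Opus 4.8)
The plan is to argue by contradiction, using the recursion theorem together with the basic fact that a c.e.\ set only ever grows. Suppose toward a contradiction that $f$ is computable and well-defined on c.e.\ sets but is \emph{not} monotone. Then there are indices $e,e'$ with $W_e\subseteq W_{e'}$ but $W_{f(e)}\not\subseteq W_{f(e')}$; fix a witness $a\in W_{f(e)}\smallsetminus W_{f(e')}$.

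The key construction is a set that hedges its bets. Uniformly in a parameter $x$, I would enumerate a c.e.\ set $V_x$ as follows: at each stage $s$, check whether $a$ has appeared in the approximation $W_{f(x),s}$; as long as it has not, enumerate $W_{e,s}$ into $V_x$, but from the first stage $s_0$ at which $a\in W_{f(x),s_0}$ onward, enumerate $W_{e',s}$ into $V_x$ instead. Since $W_e\subseteq W_{e'}$, switching only adds elements, so $V_x$ is genuinely c.e.; moreover $V_x=W_e$ if $a$ is never enumerated into $W_{f(x)}$, while $V_x=W_{e'}$ if it ever is. This defines a computable function $x\mapsto h(x)$ with $W_{h(x)}=V_x$, so the recursion theorem supplies an index $d$ with $W_d=W_{h(d)}$.

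Now I would run the dichotomy on this fixed point. If $a$ is never enumerated into $W_{f(d)}$, the construction yields $W_d=W_e$, so well-definedness gives $W_{f(d)}=W_{f(e)}\ni a$, contradicting that $a$ never enters $W_{f(d)}$. If instead $a$ does enter $W_{f(d)}$ at some stage, the construction switches and yields $W_d=W_{e'}$, so well-definedness gives $W_{f(d)}=W_{f(e')}$, whence $a\notin W_{f(d)}$; but c.e.\ sets never lose elements, so $a\in W_{f(d)}$ --- again a contradiction. Either way we contradict the choice of $a$, so $f$ must be monotone. The only step requiring any care is verifying that $V_x$ is c.e.\ with exactly the two claimed possible values, which is precisely where the hypothesis $W_e\subseteq W_{e'}$ is used; the uniformity of $h$ and the fixed-point step are routine.
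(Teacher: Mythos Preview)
Your proof is correct and uses essentially the same approach as the paper: both build, via the recursion theorem, a self-referential program that begins enumerating $W_e$ and switches to enumerating $W_{e'}$ the moment the witness element appears in the image under $f$, relying on $W_e\subseteq W_{e'}$ to make the switch harmless. The only cosmetic difference is that you cast the argument as a proof by contradiction with an explicit dichotomy on whether $a$ ever appears in $W_{f(d)}$, whereas the paper phrases the same construction directly.
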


\begin{proof}
  Fix $e$, $e'$, and $x$ such that $W_e\subset W_{e'}$, and $x\in
  W_{f(e)}$; we must show that $x\in W_{f(e')}$ as well.  We shall use
  the recursion theorem to design an auxiliary program $p$ which knows
  its own index, and hence the index of $f(p)$.  The program $p$
  simulates both $f(p)$ and $e$, and at first $p$ behaves just like
  $W_e$.  Of course, if $p$ were to continue in this manner forever,
  then we would have $W_p=W_e$ and since $f$ is well-defined on c.e.\
  sets, we would have $W_{f(p)}=W_{f(e)}$.  It follows that there is
  some stage by which $x$ appears in $W_{f(p)}$.  Once this occurs,
  $p$ begins to simulate $e'$ and mimic \emph{its} behavior instead of
  that of $e$.  This does not contradict the earlier behavior of $p$,
  since $W_e\subset W_{e'}$.  Thus in the end, we will have
  $W_p=W_{e'}$, and hence $W_{f(p)}=W_{f(e')}$, since $f$ is
  well-defined on c.e.\ sets.  But we also arranged that $x\in
  W_{f(p)}$ and therefore $x\in W_{f(e')}$, as desired.
\end{proof}

We can now complete the proof that $E_\smin^{ce}$ and $E_\smax^{ce}$
are computably incomparable.

\begin{thm}
  \label{thm:mintomax}
  $E_\smin^{ce}$ is not computably reducible to $E_{\smax}^{ce}$.
  Consequently, $E_\smax^{ce}$ lies properly below $=^{ce}$.
\end{thm}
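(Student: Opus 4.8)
The plan is to contradict a hypothetical reduction by feeding Lemma~\ref{lem:monotonicity} the $\subseteq$-decreasing sequence of \emph{tails} $B_k=\set{k,k+1,k+2,\ldots}$, whose minima $\min(B_k)=k$ are pairwise distinct. So suppose, toward a contradiction, that $f$ is a computable reduction from $E_\smin^{ce}$ to $E_\smax^{ce}$. A priori $f$ respects $=^{ce}$ only up to $E_\smax^{ce}$, so it need not be well-defined on c.e.\ sets, and before Lemma~\ref{lem:monotonicity} can be invoked I would repair this by post-composing with a computable $E_\smax^{ce}$-selector. Recall the ``saturate downwards'' map $s$, which sends an index $e$ to a program enumerating $\set{n\in\NN\mid\exists l\in W_e\,(l\geq n)}$; the set $W_{s(e)}$ is $\emptyset$, the finite initial segment $\set{0,\ldots,\max(W_e)}$, or all of $\NN$, according as $W_e$ is empty, finite and nonempty, or infinite, so $W_{s(e)}\mathrel{E_\smax^{ce}}W_e$ and $s$ is constant on $E_\smax^{ce}$-classes. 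Put $g=s\circ f$. Then $g$ is computable; it is well-defined on c.e.\ sets, since $W_e=W_{e'}$ yields $W_{f(e)}\mathrel{E_\smax^{ce}}W_{f(e')}$ and hence $W_{g(e)}=W_{g(e')}$; and it remains a reduction from $E_\smin^{ce}$ to $E_\smax^{ce}$, since $W_{g(e)}\mathrel{E_\smax^{ce}}W_{f(e)}$ for every $e$ and $E_\smax^{ce}$ is an equivalence relation.

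By Lemma~\ref{lem:monotonicity}, $g$ is then monotone: $W_e\subset W_{e'}$ implies $W_{g(e)}\subset W_{g(e')}$. I would fix a uniformly computable sequence of indices $b_k$ with $W_{b_k}=B_k$ and set $D_k=W_{g(b_k)}$. From $B_0\supseteq B_1\supseteq B_2\supseteq\cdots$, monotonicity gives $D_0\supseteq D_1\supseteq D_2\supseteq\cdots$; and since the $B_k$ are pairwise $E_\smin^{ce}$-inequivalent, the reduction property makes the $D_k$ pairwise $E_\smax^{ce}$-inequivalent. But each $D_k$ lies in the range of $s$, hence is $\emptyset$, a finite initial segment of $\NN$, or all of $\NN$, and two sets of this special form are $E_\smax^{ce}$-equivalent precisely when equal; so the $D_k$ are pairwise distinct, i.e.\ $D_0\supseteq D_1\supseteq D_2\supseteq\cdots$ with every inclusion proper.

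This last configuration is impossible: at most $D_0$ can equal $\NN$, since anything strictly below $\NN$ in the chain is a proper, hence finite, subset of $\NN$; so $D_1$ is finite, yet the sets $D_k$ with $k\geq1$ form an infinite strictly decreasing chain of subsets of the finite set $D_1$ --- absurd. Hence $E_\smin^{ce}\not\leq E_\smax^{ce}$. Since $E_\smin^{ce}\leq\mathord{=}^{ce}$, it follows that $\mathord{=}^{ce}\not\leq E_\smax^{ce}$ (else $E_\smin^{ce}\leq\mathord{=}^{ce}\leq E_\smax^{ce}$), so $E_\smax^{ce}$ lies properly below $=^{ce}$; and with Theorem~\ref{thm:maxtomin} the relations $E_\smin^{ce}$ and $E_\smax^{ce}$ are computably incomparable. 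The step I expect to be the main obstacle is precisely the first paragraph's passage from $f$ to $g=s\circ f$: everything else is routine, but it is this composition with the selector that upgrades ``$f$ respects $E_\smin^{ce}$ up to $E_\smax^{ce}$'' into ``$g$ is well-defined on c.e.\ sets,'' and so makes the monotonicity lemma applicable and the whole argument go through.
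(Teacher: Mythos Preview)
Your proof is correct and follows essentially the same approach as the paper's: post-compose the hypothetical reduction with the downward-saturation selector to obtain a map well-defined on c.e.\ sets, apply the monotonicity lemma to the decreasing tails $[k,\infty)$, and derive an infinite strictly decreasing chain. If anything, your handling of the endpoint cases (where some $D_k$ could be $\emptyset$ or $\NN$) is slightly more careful than the paper's, which passes directly to ``$\max(W_{f(e_n)})$ is a strictly decreasing sequence of natural numbers'' without explicitly ruling out infinite or empty images.
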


\begin{proof}
  Suppose to the contrary that $f$ is a reduction from $E_\smin^{ce}$
  to $E_\smax^{ce}$.  We first claim that we can assume, without loss
  of generality, that $f$ is well-defined on c.e.\ sets.  Indeed let
  $g$ be the reduction from $E_\smax^{ce}$ to $=^{ce}$ given just
  below Definition~\ref{def:minmax}.  Then clearly $g\circ f$ is
  well-defined on c.e.\ sets.  Moreover since $g$ is a
  \emph{selector}, that is, $g(e)\mathrel{E_\smax^{ce}}e$, we have
  that $g\circ f$ is again a reduction from from $E_\smin^{ce}$ to
  $E_\smax^{ce}$.  Hence, we may replace $f$ with $g\circ f$ to
  establish the claim.

  Now, for each $n$, let $e_n$ be a program enumerating $[n,\infty)$.
  Then the sets $W_{e_n}$ form a monotone decreasing sequence of sets.
  By the claim, the monotonicity lemma implies that $W_{f(e_n)}$ is
  also a monotone decreasing sequence of sets.  Moreover, since the
  $\min(W_{e_n})$ are all distinct and $f$ is a reduction, we must
  have that $\max(W_{e_n})$ are all distinct.  It follows that
  $\max(W_{e_n})$ is a strictly decreasing sequence of natural
  numbers, which is a contradiction.
\end{proof}

At the end of the section, we will give a broad generalization of this
argument.  Before doing so, we will use these ideas to find an
equivalence relation on c.e.\ sets which is incomparable with
$=^{ce}$.

\begin{defn}
  Let $E_\smed^{ce}$ denote the equivalence relation on c.e.\ sets
  defined by $e\mathrel{E}_\smed^{ce} e'$ if and only if the sets
  $W_e$ and $W_{e'}$ are both finite and have the same median (or are
  both empty or both infinite).
\end{defn}

\begin{prop}
  \label{prop:med1}
  $E_\smed^{ce}$ lies between $E_\smax^{ce}$ and $E_0^{ce}$ in the
  reducibility hierarchy.
\end{prop}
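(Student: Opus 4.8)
The statement asserts that $E_\smax^{ce} \leq E_\smed^{ce} \leq E_0^{ce}$. I will establish the two reductions separately; the first should be routine, and the second is where the real work lies.

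For $E_\smax^{ce} \leq E_\smed^{ce}$, the plan is to take a program $e$ and build $f(e)$ so that $W_{f(e)}$ is finite with median equal to $\max(W_e)$ when $W_e$ is finite and nonempty, infinite when $W_e$ is infinite, and empty when $W_e$ is. The natural device is: whenever a new element $n$ appears in $W_e$, extend $W_{f(e)}$ to an initial segment $\{0,1,\dots,2n\}$ (so its median is exactly $n$), and arrange that this ``resetting'' is done monotonically in $e$'s enumeration so that if $W_e$ stabilizes with maximum $M$ we end with $W_{f(e)}=\{0,\dots,2M\}$, of median $M$; if $W_e$ is infinite then $W_{f(e)}=\NN$ is infinite; and if $W_e=\emptyset$ then $W_{f(e)}=\emptyset$. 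One must be slightly careful that the max of $W_e$ is reflected, not merely some element, but since each new element only ever increases the current maximum, always rebuilding around the largest element seen so far handles this. One should also note that $E_\smax^{ce}$ is only defined up to the equivalence that identifies all infinite sets and all empty sets, so matching those two ``degenerate'' cases to the degenerate classes of $E_\smed^{ce}$ suffices.

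For $E_\smed^{ce} \leq E_0^{ce}$, the idea is to encode a finite set $W_e$ with median $\mu$ by a c.e.\ set $W_{f(e)}$ whose $E_0$-class records $\mu$, while sending the infinite case and the empty case to two distinguished $E_0$-classes (say $\NN$ and $\emptyset$, which are not $E_0$-related to each other or to any set of the form below). The core subtlety is that we must \emph{detect} when $W_e$ has become infinite, which is only $\Sigma^0_2$ information, and we must do so while producing a c.e.\ set whose $E_0$-class is robust against the finitely many ``false alarms'' that occur while the median of the finite part keeps moving. The plan is: as $W_e$ is enumerated, maintain a guess $\mu_s$ at the median of $W_{e,s}$; put into $W_{f(e)}$ a ``flag block'' coding the current guess $\mu_s$ (for concreteness, enumerate into $W_{f(e)}$ the column $\{\langle \mu_s, k\rangle : k\in\NN\}$), and whenever the guess changes, start filling in the new column. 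If $W_e$ is finite, the guess stabilizes at the true median $\mu$, so $W_{f(e)}$ equals, up to a finite set, exactly the single column $\{\langle \mu,k\rangle:k\}$; thus its $E_0$-class depends only on $\mu$. To handle the infinite case we add a parallel mechanism: maintain a counter that increments each time a \emph{new} element of $W_e$ appears beyond a certain threshold, and if this counter exceeds the number of columns used so far, dump all of $\NN$ into $W_{f(e)}$ — but this naive approach fails because a large finite set would trigger it. The correct mechanism is instead to use the standard $\Sigma^0_2$-style ``injury'' bookkeeping: $W_e$ is infinite iff the median $\mu_s\to\infty$, so we should detect growth of the \emph{median guess itself}; if $\mu_s$ increases unboundedly we enumerate everything, and if $\mu_s$ stabilizes we enumerate only the stable column. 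Concretely, enumerate $\langle c, k\rangle$ into $W_{f(e)}$ at stage $s+1$ exactly when $c \leq \mu_s$ and $k\leq s$ together with $\langle c,k\rangle \in W_{f(e),s}$ for previously-seen $c$; then if $\mu_s\to\mu$ we get (finite symmetric difference from) the set $\{\langle c,k\rangle : c\leq\mu\}$, and if $\mu_s\to\infty$ we get all of $\NN\times\NN$. This makes the $E_0$-class of $W_{f(e)}$ a function of $\mu$ (distinct $\mu$ give distinct classes since $\{\langle c,k\rangle : c\leq\mu\} \mathbin{\triangle} \{\langle c,k\rangle : c\leq\mu'\}$ is infinite when $\mu\neq\mu'$), equal to the ``infinite'' class iff $W_e$ is infinite, and equal to $\emptyset$ iff $W_e=\emptyset$; and these three regimes are pairwise $E_0$-inequivalent, so $f$ is a reduction.

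The main obstacle is getting the infinite-detection right: one must ensure that (i) a finite-but-large $W_e$ does \emph{not} accidentally land in the ``infinite'' $E_0$-class, and (ii) a genuinely infinite $W_e$ reliably does, \emph{and} that along the way the c.e.\ set actually constructed differs only finitely from the intended target in the stable case. Tying everything to the movement of the median guess $\mu_s$ — which tends to $\infty$ precisely when $W_e$ is infinite and which moves only finitely often otherwise — is what reconciles these demands; verifying this convergence behavior of $\mu_s$ and the resulting finite symmetric differences is the crux of the argument. I expect the writeup of the $E_\smed^{ce}\leq E_0^{ce}$ direction to occupy most of the proof, with the $E_\smax^{ce}\leq E_\smed^{ce}$ direction dispatched in a short paragraph.
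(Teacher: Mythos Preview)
Your proposal is correct and follows essentially the same approach as the paper. For the first reduction the paper simply saturates downward, taking $W_{f(e)}=\{n:\exists m\in W_e\,(n\leq m)\}$, which is your idea stripped of the factor of two; for the second, the paper also tracks the running median $r_s$ of $W_{e,s}$ and encodes its limiting value into an $E_0$-class, the only difference being the choice of encoding (the paper enumerates multiples of $r_s$, filling in an initial segment each time $r_s$ changes, rather than your columns $\{\langle c,k\rangle:c\leq\mu_s\}$). Both constructions rest on the same observation you isolate: $r_s$ stabilizes at the true median when $W_e$ is finite nonempty and tends to $\infty$ when $W_e$ is infinite, so no separate ``infinite-detection'' mechanism is needed and the whole argument is shorter than you anticipate.
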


\begin{proof}
  First, $E_\smax^{ce} \leq E_\smed^{ce}$ by the function $f$ which
  saturates downwards, that is, such that $W_{f(e)}=\set{n\mid\exists
    m\in W_e(n\leq m) }$.  Next, $E_\smed^{ce}$ is reducible to
  $E_0^{ce}$ via the function $e\mapsto f(e)$ defined as follows. The
  program $f(e)$ simulates $e$, and at each stage of simulation
  computes the median $r_s$ of the stage $s$ approximation $W_{e,s}$.
  As long as $r_s$ does not change, $f(e)$ will enumerate multiples of
  $r_s$ into $W_{f(e)}$.  Whenever $r_s$ does change, $f(e)$ fills in
  everything up to its current maximum and starting there enumerates
  multiples of the new $r_s$.

  Now, if $W_e$ and $W_{e'}$ both have median $r$, then by some stage
  the medians of $W_{e,s}$ and $W_{e',s}$ will have both stabilized at
  $r$.  Hence both $W_{f(e)}$ and $W_{f(e')}$ will both eventually
  contain just the multiples of $r$.  If $W_e$ and $W_{e'}$ are both
  empty or both infinite, then $W_{f(e)}$ and $W_{f(e')}$ will both be
  empty or all of $\NN$, respectively.  Finally, if $W_e$ and $W_{e'}$
  have distinct medians, then $W_e$ and $W_{e'}$ will disagree on an
  infinite set.
\end{proof}

\begin{thm}
  \label{thm:med2}
  $E_\smed^{ce}$ is incomparable with both $E_\smin^{ce}$ and $=^{ce}$
  with respect to computable reducibility.
\end{thm}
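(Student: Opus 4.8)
The plan is to prove two nonreducibilities, one in each direction, both by arguments in the style of Theorems~\ref{thm:maxtomin} and~\ref{thm:mintomax}. For the direction $E_\smed^{ce}\not\leq E_\smin^{ce}$, I would first use the arithmetic hierarchy: observe that $E_\smin^{ce}$ is $\Delta^0_2$ (as shown in the proof of Theorem~\ref{thm:maxtomin}), while $E_\smed^{ce}$ is not $\Pi^0_2$ and not $\Sigma^0_2$. Indeed, one of its classes, namely the set $\mathrm{INF}=\set{e\mid\abs{W_e}=\aleph_0}$, is already $\Pi^0_2$-complete, so if $E_\smed^{ce}$ reduced to the $\Delta^0_2$ relation $E_\smin^{ce}$, that class would be $\Delta^0_2$, a contradiction. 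Hence $E_\smed^{ce}\not\leq E_\smin^{ce}$, and combining with $E_\smax^{ce}\leq E_\smed^{ce}$ (Proposition~\ref{prop:med1}) this also re-derives that $E_\smed^{ce}\not\leq{=^{ce}}$ is \emph{not} forced this way; so the genuinely new content is the other direction.

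For the direction $E_\smin^{ce}\not\leq E_\smed^{ce}$, and also $=^{ce}\not\leq E_\smed^{ce}$, I would run a monotonicity argument. Suppose $f$ is a reduction from $E_\smin^{ce}$ to $E_\smed^{ce}$. As in the proof of Theorem~\ref{thm:mintomax}, first reduce to the case that $f$ is well-defined on c.e.\ sets, by postcomposing with the selector from $E_\smed^{ce}$ to $E_0^{ce}$ coming out of Proposition~\ref{prop:med1} --- wait, that selector lands in $E_0^{ce}$, not $E_\smed^{ce}$, so instead I would use that $E_\smed^{ce}$ itself admits a computable selector: given $e$, enumerate $W_e$, and if/when it has been seen to have an even number $2r$ of elements with middle pair straddled, or an odd number with median $d$, output a canonical finite set with that median (filling to a canonical finite set as the approximation grows, and switching to $\NN$ if the size ever exceeds any bound we committed to). Granting a selector $s$ with $s(e)\mathrel{E_\smed^{ce}}e$ and $s$ well-defined on c.e.\ sets, replace $f$ by $s\circ f$. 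Now apply the monotonicity lemma (Lemma~\ref{lem:monotonicity}) to the decreasing sequence $W_{e_n}=[n,\infty)$: the sets $W_{f(e_n)}$ are decreasing, and since the $\min(W_{e_n})$ are distinct, the $E_\smed^{ce}$-classes of the $W_{f(e_n)}$ are distinct, so the $W_{f(e_n)}$ are pairwise $E_\smed^{ce}$-inequivalent finite sets with distinct medians. A strictly decreasing infinite sequence of finite subsets of $\NN$ with pairwise distinct medians: each $W_{f(e_n)}\supset W_{f(e_{n+1})}$ with $\smed(W_{f(e_n)})\neq\smed(W_{f(e_{n+1})})$. This is not immediately contradictory the way ``strictly decreasing naturals'' was, because medians of a shrinking finite set can go up or down; so the main obstacle is extracting the contradiction here.

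To close that gap, I would exploit that we have \emph{all} of $E_\smin^{ce}$ to play with, not just one chain. For each pair $n<n'$ the set $W_{e_n}=[n,\infty)$ strictly contains $W_{e_{n'}}=[n',\infty)$, so monotonicity gives $W_{f(e_n)}\supsetneq W_{f(e_{n'})}$, i.e.\ $(W_{f(e_n)})_{n}$ is a strictly decreasing sequence of \emph{finite} sets --- but a strictly decreasing sequence of finite sets must be strictly decreasing in cardinality, hence $\abs{W_{f(e_n)}}\to -\infty$ after finitely many steps is impossible since cardinalities are nonnegative: $\abs{W_{f(e_0)}}>\abs{W_{f(e_1)}}>\cdots$ is an infinite strictly decreasing sequence in $\NN$, the desired contradiction. (The possibility that some $W_{f(e_n)}$ is infinite is handled first: if $W_{f(e_0)}$ is infinite then, being decreasing, either all $W_{f(e_n)}$ are infinite --- so all $E_\smed^{ce}$-equivalent, contradicting distinctness --- or some $W_{f(e_n)}$ is finite, and we restart the count from there.) Thus no reduction $f$ exists, so $E_\smin^{ce}\not\leq E_\smed^{ce}$. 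The same sequence $(e_n)$ witnesses $=^{ce}\not\leq E_\smed^{ce}$, since a reduction from $=^{ce}$ restricts to a reduction of the substructure $\set{e_n\mid n\in\NN}$ with its $=^{ce}$-classes, which are all distinct, and the identical monotonicity computation applies. Combined with the first paragraph, $E_\smed^{ce}$ is incomparable with $E_\smin^{ce}$ and with $=^{ce}$, completing the proof.
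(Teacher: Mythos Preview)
There are two genuine gaps.

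\textbf{The selector does not exist.} To apply the monotonicity lemma to a putative reduction $f\colon E_\smin^{ce}\to E_\smed^{ce}$, you need $s\circ f$ to be well-defined on c.e.\ sets. But from $W_e=W_{e'}$ you only get $f(e)\mathrel{E_\smed^{ce}}f(e')$, so you would need $s$ to satisfy $a\mathrel{E_\smed^{ce}}b\Rightarrow W_{s(a)}=W_{s(b)}$. Combined with your requirement $s(e)\mathrel{E_\smed^{ce}}e$, such an $s$ is precisely a computable reduction from $E_\smed^{ce}$ to $=^{ce}$, which is one of the nonreducibilities you are trying to establish. (Your hand-waved construction depends on the enumeration order of $W_e$, not just on the set $W_e$, so it is not well-defined on c.e.\ sets.) The paper avoids monotonicity entirely for this direction: it fixes $e_0$ with finite median $r_0=\med(W_{f(e_0)})$, sets $N=2r_0+2$, picks $e_1,\ldots,e_N$ with strictly increasing minima and finite medians $r_i$, and then uses the recursion theorem to build a single program $e$ that successively enumerates $\min(W_{e_N}),\min(W_{e_{N-1}}),\ldots$, each time waiting for $\med(W_{f(e),s})$ to hit $r_i$. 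This forces $|W_{f(e)}|\geq N=2r_0+2$ while requiring $\med(W_{f(e)})=r_0$, a contradiction.

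\textbf{One direction is missing.} You correctly observe that the complexity argument does not yield $E_\smed^{ce}\not\leq\,=^{ce}$, but you never return to prove it; you instead argue $=^{ce}\not\leq E_\smed^{ce}$ (with the same selector gap). The direction $E_\smed^{ce}\not\leq\,=^{ce}$ cannot be derived from the three nonreducibilities you attempt. The paper handles it with a short monotonicity argument, which is legitimate here because any reduction \emph{into} $=^{ce}$ is automatically well-defined on c.e.\ sets: with $\{1\}\subset\{1,2\}\subset\{0,1,2\}$ one gets $W_{f(e_1)}\subsetneq W_{f(e_2)}\subsetneq W_{f(e_3)}$, hence $W_{f(e_1)}\neq W_{f(e_3)}$, contradicting $\med\{1\}=\med\{0,1,2\}$.

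It is worth noting that, had monotonicity been available for $f\colon E_\smin^{ce}\to E_\smed^{ce}$, your infinite-descent argument via $|W_{f(e_n)}|$ on the chain $[n,\infty)$ would be a clean finish; the obstruction is purely in obtaining well-definedness.
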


\begin{proof}
  It suffices to show that $E_\smin^{ce}$ is not reducible to
  $E_\smed^{ce}$ and that $E_\smed^{ce}$ is not reducible to $=^{ce}$.
  Suppose first that $f$ is a reduction from $E_\smed^{ce}$ to
  $=^{ce}$.  Then $f$ is well-defined on c.e.\ sets, and therefore it
  is monotone.  Consider the three sets $W_{e_1}=\{1\}$,
  $W_{e_2}=\{1,2\}$, and $W_{e_3}=\{0,1,2\}$.  Since $W_{e_1}\subset
  W_{e_2}$ and $\med(W_{e_1})\neq\med(W_{e_2})$, we have
  $W_{f(e_1)}\subsetneq W_{f(e_2)}$.  Since $W_{e_2}\subset W_{e_3}$
  and $\med(W_{e_2})\neq\med(W_{e_3})$, we have $W_{f(e_2)}\subsetneq
  W_{f(e_3)}$.  It follows that $W_{f(e_1)}\neq W_{f(e_3)}$.  But this
  contradicts that $f$ is a reduction, since
  $\med(W_{e_1})=\med(W_{e_3})$.

  Next suppose that $f$ is a reduction from $E_\smin^{ce}$ to
  $E_\smed^{ce}$, which means that $W_e E_\smin W_{e'}\iff W_{f(e)}
  E_\smed W_{f(e')}$.  Fix any index $e_0$ for which $W_{e_0}$ is
  nonempty and $W_{f(e_0)}$ has finite median $r_0$.  Let $N=2r_0+2$
  and choose a finite sequence of programs $e_1, e_2, \ldots, e_N$,
  for which
  $\min(W_{e_0})<\min(W_{e_1})<\min(W_{e_2})<\cdots<\min(W_{e_N})$ and
  each $r_i=\med(W_{f(e_i)})$ exists and is finite. Such a sequence
  exists because there are infinitely many different possible minimums
  for $W_{e_i}$ and at most two of these minimums leads to
  $W_{f(e_i)}$ being empty or infinite; all the rest have finite
  medians.  Note also that the $r_i$ are distinct. Consider now the
  program $e$ that begins by enumerating $\min(W_{e_N})$ into
  $W_e$. Since so far this set has the same minimum as $W_{e_N}$, it
  must eventually happen that there is a stage $s_N$ at which
  $W_{f(e),s_N}$ has median $r_N$. At such a stage, let program $e$
  now enumerate $\min(W_{e_{N-1}})$ into $W_e$. Because $W_e$ at this
  stage has the same minimum as $W_{e_{N-1}}$, if we do not add
  anything more to $W_e$ then there must be a stage $s_{N-1}$ at which
  the median of $W_{f(e),s_{N-1}}$ becomes $r_{N-1}$. And when this
  occurs, let program $e$ enumerate $\min(W_{e_{N-2}})$ into $W_e$,
  and so on. After $N$ iterations of this process, we have a set $W_e$
  with the same minimum as $W_{e_1}$, and at some eventual stage $s_1$
  the median of $W_{f(e),s_1}$ becomes $r_1$. When this occurs,
  finally, we enumerate $\min(W_{e_0})$ into $W_e$, thereby ensuring
  that the minimum of $W_e$ is the same as that of $W_{e_0}$, and so
  the median of $W_{f(e)}$ must now eventually become $r_0$. But a
  careful examination of our construction will reveal that $W_{f(e)}$
  has at least $N$ elements, that is, at least $2r_0+2$ many, since at
  least one new element was added at each step of the process. But if
  $W_{f(e)}$ has this many elements, then it is impossible for it to
  have median $r_0$, which is a contradiction. Therefore, no such
  reduction $f$ exists.
\end{proof}

The relationships expressed by the last two results are summarized in
Figure~\ref{fig:med}.

\begin{figure}[ht]
\begin{tikzpicture}
  \node at (0,0) (eq) {$=^{ce}$};
  \node at (-1,-1) (min) {$E_\smin^{ce}$} edge (eq);
  \node at (1,-1) (max) {$E_\smax^{ce}$} edge (eq);
  \node at (2,0) (med) {$E_\smed^{ce}$} edge (max);
  \node at (1,1) (e0) {$E_0^{ce}$} edge (eq) edge (med);
\end{tikzpicture}
\caption{Diagram of reducibility among the relations in this section
  thus far.  It follows from Theorems~\ref{thm:maxtomin},
  \ref{thm:mintomax}, and~\ref{thm:med2} that this diagram is
  complete, in the sense that any edges not shown represent
  non-reducibilities.\label{fig:med}}
\end{figure}
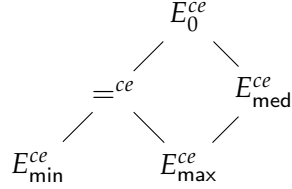

It is natural to generalize the examples of $E_\smin$ and $E_\smax$ to
try to find a large family of simple incomparable relations.  There
are many possibilities for doing so, and in the remainder of this
section we shall consider just one: a generalization to arbitrary
computable linear orders.  This will enable us to find numerous
equivalence relations which are incomparable and lie below $=^{ce}$,
and therefore strengthen our denial of Silver's theorem for computable
reducibility.

In what follows, if $L$ is a linear ordering with order relation $<_L$
and $W\subset L$, then we let $\cut_L(W)$ denote the Dedekind cut
determined by $W$, that is,
\[\cut_L(W)=\set{l\in L\mid\exists w\in W(l<_Lw)}\;.
\]
Evidently, if $L$ is a computable linear ordering with domain $\NN$,
and $W$ is a c.e.\ subset of $\NN$, then $\cut_L(W)$ is c.e.\ as well.

\begin{defn}\
  \begin{itemize}
  \item For $L$ a computable linear ordering, let $E_L$ denote the
    \emph{same cut} equivalence relation defined by $e\mathrel{E_L}e'$
    if and only if $\cut_L(W_e)=\cut_L(W_{e'})$.
  \item Similarly, we define $H_L$ to be the \emph{same hull}
    equivalence relation defined by $e\mathrel H_Le'$ if and only if
    the convex hull of $W_e$ in $L$ is the same as the convex hull of
    $W_{e'}$ in $L$.
  \end{itemize}
\end{defn}

For any computable $L$, the relations $E_L$ and $H_L$ are both
computably reducible to $=^{ce}$ (by mapping $W_e$ to the cut or the
hull that it determines, respectively).  Moreover, both $E_L$ and
$E_{L^*}$ are computably reducible to $H_L$, where $L^*$ denotes the
reverse of $L$.  To see this, note that the cut map again defines a
reduction $E_L\leq H_L$, and of course $H_L$ is bireducible with
$H_{L^*}$, since in fact $H_L$ and $H_{L^*}$ are the same relation.

We have already seen several of the $E_L$ in another context.  For
instance, the relation $E_\smax$ can be identified as $E_\omega$,
where $\omega$ denotes the usual ordering on $\NN$.  Similarly,
$E_\smin$ can be identified with $E_{\omega^*}$.  Finally, $=^{ce}$ is
computably bireducible with $E_\QQ$; to see that $=^{ce}\leq E_\QQ$
consider the map which sends a c.e.\ set $W$ to the cut in $\QQ$
corresponding to the real number $\sum_{n\in W}1/3^{n+1}$.

We shall use the following notation.  For $L$ a computable linear
order, let $\overline{L}$ denote the set of c.e.\ cuts in $L$.  We
shall say that $\overline{L_1}$ is \emph{computably embeddable} into
$\overline{L_2}$, written $\overline{L_1}\injto_c\overline{L_2}$, if
there exists a computable function $\alpha\from\NN\into\NN$ such that
for all programs $e,e'$, we have
\[\cut_{L_1}(W_e)<\cut_{L_1}(W_{e'})
  \iff\cut_{L_2}(W_{\alpha(e)})<\cut_{L_2}(W_{\alpha(e')})\;.
\]
The next result characterizes the structure of the $E_L$ relations
with respect to computable reducibility.

\begin{thm}
  \label{thm:lo}
  Let $L_1$ and $L_2$ be computable linear orders.  Then $E_{L_1}\leq
  E_{L_2}$ if and only if $\overline{L_1}\injto_c\overline{L_2}$.
\end{thm}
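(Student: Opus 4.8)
The plan is to prove both directions by producing explicit computable functions, with the key conceptual tool being that any computable reduction between $E_L$-type relations can be assumed, without loss of generality, to be well-defined on c.e.\ sets, so that the Monotonicity Lemma (Lemma~\ref{lem:monotonicity}) applies.

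First I would handle the easy direction: if $\overline{L_1}\injto_c\overline{L_2}$ via a computable $\alpha$, then $\alpha$ is itself a reduction from $E_{L_1}$ to $E_{L_2}$. Indeed, $e\mathrel{E_{L_1}}e'$ means $\cut_{L_1}(W_e)=\cut_{L_1}(W_{e'})$, i.e.\ neither of $\cut_{L_1}(W_e)<\cut_{L_1}(W_{e'})$ nor the reverse holds; by the defining biconditional for $\injto_c$, this is equivalent to neither $\cut_{L_2}(W_{\alpha(e)})<\cut_{L_2}(W_{\alpha(e')})$ nor its reverse, i.e.\ $\alpha(e)\mathrel{E_{L_2}}\alpha(e')$. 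So $E_{L_1}\leq E_{L_2}$ directly.

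For the forward direction, suppose $f$ is a computable reduction $E_{L_1}\leq E_{L_2}$. First I would replace $f$ by $g\circ f$, where $g$ is a computable selector for $E_{L_2}$ sending each index to the canonical index for the cut it determines (the map $W\mapsto\cut_{L_2}(W)$ composed with $f$); since $g(e)\mathrel{E_{L_2}}e$, the composite is still a reduction, and now it is well-defined on c.e.\ sets, so by the Monotonicity Lemma it is monotone: $W_e\subseteq W_{e'}$ implies $W_{f(e)}\subseteq W_{f(e')}$. Next I would use this to extract an embedding $\alpha$. The idea is that $\cut_{L_1}$ is itself monotone under inclusion, so the reduction $f$ must respect the cut ordering. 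Concretely, given an index $e$, let $\alpha(e)$ enumerate $\cut_{L_2}(W_{f(e')})$ where $e'$ is an index with $W_{e'}=\cut_{L_1}(W_e)$ (computably obtainable from $e$). Since $\cut_{L_1}(W_e)<_? \cut_{L_1}(W_{e'})$ as cuts corresponds to one being a proper subset of the other, and $f$ monotone and a reduction forces the corresponding cuts in $L_2$ to be properly nested in the same direction (equality goes to equality because $f$ is a reduction, and proper containment of cuts goes to proper containment because $f$ is monotone \emph{and} the images are not $E_{L_2}$-equivalent), one checks the defining biconditional for $\overline{L_1}\injto_c\overline{L_2}$.

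The main obstacle I anticipate is the forward direction's verification that proper inequality of cuts is \emph{reflected}, not merely preserved: given $\cut_{L_2}(W_{\alpha(e)})<\cut_{L_2}(W_{\alpha(e')})$, one must rule out the possibility that $\cut_{L_1}(W_e)$ and $\cut_{L_1}(W_{e'})$ are incomparable as cuts in $L_1$ --- but cuts in a linear order are linearly ordered by inclusion, so incomparability cannot occur, and the only alternatives to the desired strict inequality are equality (excluded since $f$, hence $\alpha$, is a reduction and the images are $E_{L_2}$-inequivalent) or the reverse strict inequality (excluded by applying monotonicity in the other direction). Thus linearity of the cut ordering is exactly what makes the argument go through, and I would make sure to state that observation explicitly. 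A secondary subtlety is confirming that $\cut_L$ sends c.e.\ sets to c.e.\ cuts uniformly, which is immediate from the displayed definition of $\cut_L$, and that precomposing with the cut-selector genuinely lands us in the hypotheses of the Monotonicity Lemma.
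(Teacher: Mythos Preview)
Your proposal is correct and follows essentially the same route as the paper: post-compose the reduction with the $\cut_{L_2}$ selector to make it well-defined on c.e.\ sets, invoke the Monotonicity Lemma, and use that cuts in a linear order are themselves linearly ordered by inclusion to upgrade monotonicity plus injectivity to an order-embedding. Your backward direction is marginally slicker than the paper's (you use $\alpha$ directly rather than $e\mapsto\cut_{L_2}(W_{\alpha(e)})$, which works for the same reason), and your forward direction makes explicit the precomposition with $\cut_{L_1}$ that the paper leaves implicit when it says ``$f$ preserves the ordering on cuts''---both are fine, and your version arguably makes the monotonicity step easier to see, since you are then comparing indices whose c.e.\ sets are literally nested.
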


\begin{proof}
  This is another application of the monotonicity lemma.  Suppose
  first that $f$ is a computable reduction from $E_{L_1}$ to
  $E_{L_2}$.  We can suppose without loss of generality that $f$ is
  well-defined on c.e.\ sets.  (Indeed, simply post-compose $f$ with
  the map that sends $p$ to a program enumerating $\cut_{L_2}(W_p)$.)
  It follows that $f$ is $\subset$-preserving and hence preserves the
  ordering on cuts.  Moreover, since $f$ is a reduction, it must be
  injective on cuts.  Hence it is an embedding of $\overline{L_1}$
  into $\overline{L_2}$.

  Next suppose that $\alpha\from\overline{L_1}\injto_c\overline{L_2}$.
  Then we simply define $W_{f(e)}=\cut_{L_2}(W_{\alpha(e)})$, so that
  \begin{align*}
    e \mathrel{E}_{L_1} e'&\iff \cut_{L_1}(W_e)=\cut_{L_1}(W_{e'})\\
    &\iff \cut_{L_2}(W_{\alpha(e)})=\cut_{L_2}(W_{\alpha(e')})\\
    &\iff W_{f(e)}=W_{f(e')}\\
    &\iff f(e) \mathrel{E}_{L_2} f(e')\;.
  \end{align*}
  (The last equivalence holds because $W_{f(e)}$ and $W_{f(e')}$ are
  both cuts.)  Hence, $f$ is a reduction from $E_{L_1}$ to $E_{L_2}$.
\end{proof}

\begin{rem}
  This result can also be generalized without much effort to the
  ``same downwards closure'' equivalence relation on c.e.\ subsets of
  computable \emph{partial} orders.
\end{rem}

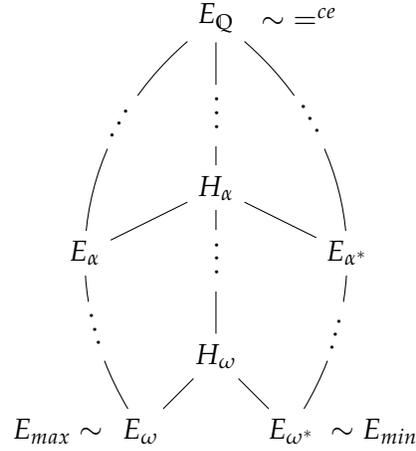
\begin{figure}[ht]
\begin{tikzpicture}[bend angle=40]
  \node at (0,0) (hullom) {$H_\omega$};
  \node at (-1,-1) (cutom) {$E_\omega$} edge (hullom);
  \node at (-2.1,-1) {$E_{max}\sim$};
  \node at (1,-1) (cutoms) {$E_{\omega^*}$} edge (hullom);
  \node at (2.1,-1) {$\sim E_{min}$};
  \node at (0,4.5) (q) {$E_\QQ$}
    edge[bend right]
      node[fill=white,sloped,pos=.25] {$\cdots$}
      node[fill=white,pos=.6] (cutal) {$E_\alpha$}
      node[fill=white,sloped,pos=.8] {$\cdots$} (cutom)
    edge[bend left]
      node[fill=white,sloped,pos=.25] {$\cdots$}
      node[fill=white,pos=.6] (cutals) {$E_{\alpha^*}$}
      node[fill=white,sloped,pos=.8] {$\cdots$} (cutoms)
    edge
      node[fill=white,sloped,pos=.25] {$\cdots$}
      node[fill=white] (hullal) {$H_\alpha$}
      node[fill=white,sloped,pos=.75] {$\cdots$} (hullom);
  \draw[-] (cutal) -- (hullal);
  \draw[-] (cutals) -- (hullal);
  \node at (1.1,4.5) {$\sim\mathord{=}^{ce}$};
\end{tikzpicture}
\caption{Diagram showing the cut and hull relations for computable
  ordinals $\alpha$ and their reverse orderings $\alpha^*$.  In 2017
  this will be the first diagram to land on Gliese
  581g.\label{fig:rocket}}
\end{figure}

Figure~\ref{fig:rocket} elaborates on Figure~\ref{fig:med} by showing
the relationships between some sample $E_L$ which hold thanks to
Theorem~\ref{thm:lo}. Note that the reductions are strict as one moves
to larger ordinals, because the cuts of a larger ordinal cannot map
into the cuts of a smaller ordinal, and no infinite well-order can map
into the cuts of its reverse. We end this section by mentioning a
couple more easy consequences of this result.

\begin{cor}
  As long as $\overline{L}\not\injto_c\overline{L^*}$, we have that
  $E_L$ and $E_{L^*}$ lie properly below $H_L$.
\end{cor}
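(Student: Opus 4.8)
The plan is to deduce this directly from Theorem~\ref{thm:lo} together with the earlier facts that $E_L$ and $E_{L^*}$ are each computably reducible to $H_L$. First I would observe that, by Theorem~\ref{thm:lo}, we have $E_L\leq E_{L^*}$ if and only if $\overline{L}\injto_c\overline{L^*}$, and likewise $E_{L^*}\leq E_L$ if and only if $\overline{L^*}\injto_c\overline{L}$; a computable embedding of $\overline{L^*}$ into $\overline{L}$ is the same as an order-reversing computable embedding of $\overline{L}$ into $\overline{L}$, which, composed with the identity reversal, would yield $\overline{L}\injto_c\overline{L^*}$. Hence the hypothesis $\overline{L}\not\injto_c\overline{L^*}$ rules out both $E_L\leq E_{L^*}$ and $E_{L^*}\leq E_L$, so these two relations are computably incomparable, and in particular neither is bireducible with $H_L$.

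Next I would use the reductions $E_L\leq H_L$ and $E_{L^*}\leq H_L$, which were established in the discussion immediately preceding Theorem~\ref{thm:lo} (the cut map witnesses $E_L\leq H_L$, and $H_L=H_{L^*}$ handles $E_{L^*}\leq H_L$). These are reductions, so it only remains to check strictness, i.e.\ that $H_L\not\leq E_L$ and $H_L\not\leq E_{L^*}$. If $H_L\leq E_L$ held, then composing with $E_{L^*}\leq H_L$ would give $E_{L^*}\leq E_L$, contradicting the incomparability just established; symmetrically, $H_L\leq E_{L^*}$ would give $E_L\leq E_{L^*}$, again a contradiction. Therefore both $E_L<H_L$ and $E_{L^*}<H_L$ strictly.

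I expect the only real content to be the first paragraph's observation that $\overline{L}\not\injto_c\overline{L^*}$ is equivalent to the failure of computable embeddability in \emph{both} directions between $\overline{L}$ and $\overline{L^*}$; this rests on the fact that reversing the order is an involution, so an embedding one way transposes into an embedding the other way. Everything else is a formal chase through the reductions already recorded before Theorem~\ref{thm:lo}, with no construction required.
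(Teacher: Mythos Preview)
Your second paragraph is exactly the right skeleton, and it matches how one would derive the corollary from Theorem~\ref{thm:lo} together with the already-recorded reductions $E_L\leq H_L$ and $E_{L^*}\leq H_L$. The hypothesis $\overline{L}\not\injto_c\overline{L^*}$ gives $E_L\not\leq E_{L^*}$ immediately, and from that your composition argument cleanly yields $H_L\not\leq E_{L^*}$, so $E_{L^*}<H_L$ strictly.

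The problem is in your first paragraph, where you try to squeeze out the \emph{other} non-reduction $E_{L^*}\not\leq E_L$ from the single hypothesis $\overline{L}\not\injto_c\overline{L^*}$. You claim that a computable embedding of $\overline{L^*}$ into $\overline{L}$ is ``the same as an order-reversing computable embedding of $\overline{L}$ into $\overline{L}$.'' This conflates $\overline{L^*}$ with $(\overline{L})^*$. They are different objects: $\overline{L^*}$ is the set of c.e.\ \emph{upward}-closed subsets of $L$ (ordered by inclusion), while $(\overline{L})^*$ is the set of c.e.\ \emph{downward}-closed subsets of $L$ ordered by reverse inclusion. The only natural candidate for an order-reversing identification between them is complementation, but the complement of a c.e.\ set need not be c.e., so this does not give a map at all, let alone one computable on indices. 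Consequently your ``identity reversal'' step is unjustified, and you have not actually shown that $\overline{L^*}\injto_c\overline{L}$ would force $\overline{L}\injto_c\overline{L^*}$.

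Without that symmetry, your argument establishes only $E_{L^*}<H_L$; the conclusion $E_L<H_L$ needs $E_{L^*}\not\leq E_L$, i.e.\ $\overline{L^*}\not\injto_c\overline{L}$, as a separate input. The paper states the corollary without proof, so there is nothing to compare your approach against, but you should either (i) supply a genuine argument that the one hypothesis implies the other, or (ii) note that the intended hypothesis is effectively the symmetric one (neither of $\overline{L},\overline{L^*}$ computably embeds into the other), from which both strict inequalities follow by your second-paragraph argument with the roles of $L$ and $L^*$ interchanged.
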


\begin{cor}
  There exist infinite chains and arbitrarily large finite antichains
  of equivalence relations on c.e.\ sets which lie below $=^{ce}$.
\end{cor}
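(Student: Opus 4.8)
The plan is to prove the two assertions separately, taking the infinite chains from the ``same cut'' relations governed by Theorem~\ref{thm:lo}, and the arbitrarily large finite antichains from the relations $E_A$ of Section~2. In both cases the work is essentially bookkeeping on top of machinery already in place, the only subtleties being the strictness claims.

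For the infinite chains, I would fix computable presentations of the ordinals $\omega^n$ for $n\geq 1$, arranged so that each $\omega^n$ occurs computably as an initial segment of $\omega^{n+1}$. The induced map on indices then witnesses $\overline{\omega^n}\injto_c\overline{\omega^{n+1}}$, so Theorem~\ref{thm:lo} gives $E_{\omega^n}\leq E_{\omega^{n+1}}$, while each $E_{\omega^n}\leq\mathord{=}^{ce}$ as remarked just before that theorem. Strictness is exactly the point noted in the discussion of Figure~\ref{fig:rocket}: the c.e.\ cuts of a computable copy of $\omega^{n+1}$, ordered by inclusion, form a well-order of order type strictly exceeding that of $\overline{\omega^n}$ (which is a suborder of the cuts of $\omega^n$, hence has type at most $\omega^n+1$, whereas $\overline{\omega^{n+1}}$ already contains an ascending chain of type $\omega^n\cdot 2+1$). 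A longer well-order does not order-embed into a shorter one, so $\overline{\omega^{n+1}}\not\injto_c\overline{\omega^n}$, whence $E_{\omega^{n+1}}\not\leq E_{\omega^n}$ by Theorem~\ref{thm:lo}. This produces the strictly increasing chain $E_\omega<E_{\omega^2}<E_{\omega^3}<\cdots$ below $\mathord{=}^{ce}$ — in fact strictly below, since $\mathord{=}^{ce}\sim E_\QQ$ and $\overline{\QQ}$ contains a dense suborder, which cannot embed into the well-order $\overline{\omega^n}$.

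For the finite antichains, I would invoke the proposition of Section~2 characterizing reducibility among the $E_A$: for c.e.\ non-computable $A,B$, we have $E_A\leq E_B$ if and only if $A\leq_1 B$. By classical computability theory (the $1$-reducibility ordering on the c.e.\ $1$-degrees is known to be complex), for each $k$ there are c.e.\ non-computable sets $A_1,\dots,A_k$ that are pairwise $1$-incomparable; one may, for instance, take a $k$-element antichain in the c.e.\ $m$-degrees and use that $A_i\not\leq_m A_j$ implies $A_i\not\leq_1 A_j$. Then $E_{A_i}\not\leq E_{A_j}$ whenever $i\neq j$, so $\{E_{A_1},\dots,E_{A_k}\}$ is an antichain, and each $E_{A_i}$ is a c.e.\ equivalence relation on $\omega$, hence computably reducible to $\mathord{=}^{ce}$. (One could instead stay within the present section by producing a family of computable linear orders whose c.e.\ cut-orderings are pairwise $\injto_c$-incomparable and appealing to Theorem~\ref{thm:lo}, but the route through $E_A$ is the shortest.)

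I expect no genuine obstacle. The two points requiring a moment's care are the strictness of the ordinal chain — which reduces to the elementary fact that a well-order does not order-embed into a shorter one, applied to the inclusion orderings $\overline{\omega^n}$ — and the citation for the existence of $k$-element antichains in the c.e.\ $1$-degrees (equivalently, $m$-degrees), which is standard.
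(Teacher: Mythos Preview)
Your chain argument via the ordinals $\omega^n$ and Theorem~\ref{thm:lo} is correct and matches the paper's intended approach (the strictness is exactly the observation accompanying Figure~\ref{fig:rocket}). One quibble: your specific claim that $\overline{\omega^{n+1}}$ contains a chain of type $\omega^n\cdot 2+1$ understates it; in any computable copy the principal cuts $\{x:x\leq_L a\}$ already give a well-ordered subset of type $\omega^{n+1}$, while $\overline{\omega^n}$ sits inside the full cut-space of $\omega^n$ and hence has type at most $\omega^n+1$. Either way the non-embedding follows.

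The antichain argument, however, does not prove the corollary as stated. The statement asks for equivalence relations \emph{on c.e.\ sets}, which throughout the paper means relations that factor through the map $e\mapsto W_e$ (i.e., relations of the form $E^{ce}$, well-defined on the c.e.\ sets themselves). The relations $E_A$ from Section~2 are not of this type: two distinct indices $e,e'$ with $W_e=W_{e'}$ can easily lie in different $E_A$-classes. So an antichain of $E_A$'s is an antichain below $=^{ce}$ in the computable-reducibility order, but not an antichain of relations on c.e.\ sets.

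The fix is exactly the alternative you mention parenthetically, and it is what the paper does: stay within the $E_L$ framework and exhibit, for each $k$, computable linear orders $L_1,\ldots,L_k$ whose c.e.\ cut-orders are pairwise $\injto_c$-incomparable. The paper's example for $k=3$ takes $L_1=\omega+\omega+\omega^*$, $L_2=\omega+\omega^*+\omega$, $L_3=\omega^*+\omega+\omega$ (arranging for all cuts to be computable), and verifies that no $\overline{L_i}$ order-embeds into $\overline{L_j}$ for $i\neq j$; the pattern extends to arbitrary $k$ by permuting more blocks. Theorem~\ref{thm:lo} then gives the antichain of $E_{L_i}$'s, and these genuinely are relations on c.e.\ sets lying below $=^{ce}$.
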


\begin{proof}[Sketch of proof.]
  For instance, to construct an antichain of size three, consider
  computable copies of $L_1=\omega+\omega+\omega^*$,
  $L_2=\omega+\omega^*+\omega$, and $L_3=\omega^*+\omega+\omega$.
  (For convenience, assume that all of the cuts are computable.)  Then
  it is easy to check that for $i\neq j$ we have
  $\overline{L_i}\not\injto\overline{L_j}$.
\end{proof}

\section{Enumerable equivalence relations}

A great portion of Borel equivalence relation theory focuses on the
countable Borel equivalence relations, that is, the Borel equivalence
relations with every class countable.  The foundation of this theory
is the Lusin/Novikov theorem from descriptive set theory which states
that every countable Borel equivalence relation can be enumerated in a
Borel fashion.  In other words, if $E$ is a countable Borel
equivalence relation on $X$, then there exists a Borel function
$f\from X\into X^\NN$ such that for all $x$, $f(x)$ enumerates
$[x]_E$.  Using this key fact, an argument of Feldman/Moore implies
that any countable Borel equivalence relation can be realized as the
orbit equivalence relation induced by a Borel action of a countable
group.  (See Theorem~7.1.2 and Theorem~7.1.4 of \cite{gao} for a
discussion of these results.)  In this section, we begin to develop a
computable analogue of countable Borel equivalence relations.

We begin by introducing an analogue of the Lusin/Novikov property for
equivalence relations on c.e.\ sets.

\begin{defn}
  Let $E$ be an equivalence relation on c.e.\ sets.  We say that
  $E^{ce}$ is \emph{enumerable in the indices} if there exists a
  computable function $\alpha\from\NN\times\NN\into\NN$ such that
  $e\mathrel{E}^{ce}e'$ if and only if there exists $n\in\NN$ such that
  $W_{\alpha(n,e)}=W_{e'}$.
\end{defn}

For example, $E_0^{ce}$ has this property.  To see this, let $s_n$
denote the $n\th$ element of some computable enumeration of
$2^{<\NN}$, and fix a function $\alpha$ such that the program
$\alpha(n,e)$ enumerates $s_n\concat
(W_e\smallsetminus\abs{s_n})$. Thus, as $n$ varies, the sets
$W_{\alpha(e,n)}$ enumerates all finite modifications of $W_e$, and so
$\alpha$ witnesses that $E_0^{ce}$ is enumerable in the indices.

\begin{prop}
  If $E^{ce}$ is enumerable in the indices then $E^{ce}\leq
  E_\sset^{ce}$.\qed
\end{prop}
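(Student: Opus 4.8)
The plan is to construct a computable reduction $f$ witnessing $E^{ce}\leq E_\sset^{ce}$ directly from the enumeration function $\alpha$ guaranteed by the hypothesis. Recall that $E_\sset^{ce}$ is the equivalence relation on (indices for) c.e.\ sequences of c.e.\ sets, where $(A_n)\mathrel{E_\sset^{ce}}(B_n)$ if and only if $\set{A_n\mid n\in\NN}=\set{B_n\mid n\in\NN}$, and that such sequences are coded by c.e.\ subsets of $\NN\times\NN$ via their columns. So I need, uniformly in $e$, to produce an index $f(e)$ for a c.e.\ subset of $\NN\times\NN$ whose set of columns is exactly $\set{W_{e'}\mid e'\mathrel{E}^{ce}e}$, the $E^{ce}$-class of $W_e$ viewed as a set of c.e.\ sets.

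The key observation is that the enumerability hypothesis says precisely that $\set{W_{e'}\mid W_{e'}\mathrel{E}W_e} = \set{W_{\alpha(n,e)}\mid n\in\NN}$: every set $E$-equivalent to $W_e$ appears as $W_{\alpha(n,e)}$ for some $n$, and conversely each $W_{\alpha(n,e)}$ is $E$-equivalent to $W_e$ (taking $e'=e$ in the ``only if'' direction of the definition, or rather noting the definition forces $\alpha(n,e)\mathrel{E}^{ce}e$ by taking $e'$ with $W_{e'}=W_{\alpha(n,e)}$ and $e'=e$ witnessing reflexivity --- more carefully, the defining biconditional with $e'=e$ gives that some $n$ works, and for the forward inclusion one observes $W_{\alpha(n,e)}$ is $E^{ce}$-related to $e$ because the witness $n'$ for the pair $(e, \alpha(n,e))$ must exist, using that $W_{\alpha(n,e)}$ is itself a c.e.\ set with its own index). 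Thus the plan is to let $f(e)$ be an index for the c.e.\ set $\set{\langle n, x\rangle\mid x\in W_{\alpha(n,e)}}$, so that the $n\th$ column of $W_{f(e)}$ is exactly $W_{\alpha(n,e)}$. This is clearly computable in $e$ by the $s$-$m$-$n$ theorem, since $\alpha$ is computable.

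Then the verification is a short chain: $e\mathrel{E}^{ce}e'$ iff $W_e\mathrel{E}W_{e'}$ iff $W_e$ and $W_{e'}$ have the same $E$-class iff $\set{W_{\alpha(n,e)}\mid n}=\set{W_{\alpha(n,e')}\mid n}$ (using that each side equals the common $E$-class by the paragraph above) iff the columns of $W_{f(e)}$ and the columns of $W_{f(e')}$ form the same set of c.e.\ sets iff $f(e)\mathrel{E_\sset^{ce}}f(e')$. I expect the only point requiring a little care --- and the one I would flag as the main obstacle, though it is minor --- is justifying that $\set{W_{\alpha(n,e)}\mid n\in\NN}$ equals the full $E^{ce}$-class of $W_e$ rather than merely a subset of it; this follows from applying the defining biconditional of enumerability in both directions, once with the pair $(e,e)$ to get ``$\supseteq$'' and once to each $e'$ in the class to get membership, but it should be spelled out cleanly.
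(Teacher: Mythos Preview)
Your proposal is correct and follows exactly the approach the paper takes: map $e$ to an index $f(e)$ for the c.e.\ subset of $\NN\times\NN$ whose $n\th$ column is $W_{\alpha(n,e)}$, so that the set of columns of $W_{f(e)}$ is precisely the $E$-class of $W_e$. The point you flag as needing care---that $\set{W_{\alpha(n,e)}\mid n\in\NN}$ is the entire class, not a proper subset---is handled exactly as you indicate, by applying the defining biconditional with $e'=\alpha(n,e)$ (so $m=n$ is the required witness); the paper leaves this implicit.
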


Indeed, simply send a program $e$ to a program for a subset of
$\NN\times\NN$ which acts like $\alpha(n,e)$ on the $n\th$ column.  Of
course $E_\sset^{ce}$ is not itself enumerable, since enumerable
relations are easily seen to be $\Sigma^0_3$, whereas it follows from
Theorem~\ref{thm:e3} that $E_\sset^{ce}$ is $\Pi^0_3$ complete.

We next consider the important special case of equivalence relations
on c.e.\ sets which are induced, in an appropriate sense, by a
computable action of a computable group.

\begin{defn}
  Suppose that $\Gamma$ is a computable group acting on the c.e.\
  subsets of $\NN$.  We say that the action is \emph{computable in the
    indices} if there exists a computable function
  $\alpha\from\NN\times\NN\into\NN$ such that
  $W_{\alpha(\gamma,e)}=\gamma W_e$.
\end{defn}

For example, if $\Gamma$ is a computable group then the left
translation action of $\Gamma$ on the set $\mathcal P(\Gamma)^{ce}$ of
c.e.\ subsets of $\Gamma$ is computable in the indices.  We shall use
the following notation: if the group $\Gamma$ acts on $CE$, which we
write $\Gamma\actson CE$, then the induced orbit equivalence relation
is defined by $e\mathrel{E}_\Gamma^{ce} e'$ if and only if
$\exists\gamma\in\Gamma$ such that $W_{e'}=\gamma W_e$.

One would like to prove an analogue of the Feldman/Moore theorem which
would say that every enumerable relation is in fact the orbit relation
of some action which is computable in the indices.  Unfortunately,
this is not the case.  To get an idea of the difficulties involved,
consider the natural action giving rise to $E_0$, namely, the bitwise
addition action of $2^{<\NN}$ on $\mathcal P(\NN)$.  This action is
highly effective in many natural senses, but not in the sense of this
paper: given $e$ and $s\in2^{<\NN}$, it is not clear how to computably
produce a program which enumerates the bitwise sum
$s\stackrel{.}{+}W_e$.  Indeed, the next result strongly negates the
possibility that there is an analogue of Feldman/Moore in this
context.

\begin{thm}
  Let $E$ be an equivalence relation on c.e.\ sets.  Suppose that
  there exists $e\in\NN$ such that $\abs{[W_e]_E}>2$ and, for all
  $e'\mathrel{E}^{ce}e$ we have $W_e\subset W_{e'}$.  Then $E$ is not
  induced by any action which is computable in the indices.
\end{thm}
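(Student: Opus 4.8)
The plan is to argue by contradiction, extracting from the hypothesis ``computable in the indices'' precisely the monotonicity that collapses the offending $E$-class. So suppose $\Gamma$ is a computable group acting on the c.e.\ subsets of $\NN$, that the action is computable in the indices via some computable $\alpha$ with $W_{\alpha(\gamma,e')}=\gamma W_{e'}$ for all $\gamma\in\Gamma$ and all indices $e'$, and that $E_\Gamma^{ce}=E^{ce}$.

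The first step is to observe that \emph{every element of $\Gamma$ preserves inclusion among c.e.\ sets}. Fix $\gamma\in\Gamma$, which is a natural number since $\Gamma$ is a computable group with domain $\NN$, and consider the computable function $h_\gamma\from\NN\into\NN$ given by $h_\gamma(e')=\alpha(\gamma,e')$. If $W_a=W_b$ then $\gamma W_a=\gamma W_b$, so $W_{h_\gamma(a)}=W_{h_\gamma(b)}$; thus $h_\gamma$ is well-defined on c.e.\ sets, and the Monotonicity Lemma~\ref{lem:monotonicity} applies, giving that $W_a\subset W_b$ implies $\gamma W_a=W_{h_\gamma(a)}\subset W_{h_\gamma(b)}=\gamma W_b$. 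This is the heart of the matter.

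Next I would bring in the hypothesis on $e$. Since $E_\Gamma^{ce}=E^{ce}$, the $E$-class of $W_e$ is exactly the orbit $\set{\gamma W_e\mid\gamma\in\Gamma}$, and it has more than two elements, so there is some $\gamma$ with $\gamma W_e\neq W_e$. Every member of this orbit is $E^{ce}$-related to $e$ (take $e'=\alpha(\delta,e)$ for the relevant $\delta$), so the hypothesis gives $W_e\subset\gamma W_e$ for \emph{all} $\gamma\in\Gamma$; in particular $W_e\subset\gamma^{-1}W_e$. Applying $\gamma$ and using the monotonicity from the previous step, $\gamma W_e\subset\gamma(\gamma^{-1}W_e)=W_e$. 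Together with $W_e\subset\gamma W_e$ this forces $\gamma W_e=W_e$, contradicting the choice of $\gamma$.

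I do not expect any real obstacle beyond the conceptual step of recognizing that ``computable in the indices'' is exactly what lets the recursion-theoretic argument of the Monotonicity Lemma go through for each $h_\gamma$; once each $\gamma$ is known to be $\subset$-preserving, the collapse of the orbit is a two-line computation, and the hypothesis $\abs{[W_e]_E}>2$ is used only to guarantee the orbit is nontrivial (indeed $>1$ would suffice). The one thing worth checking is the degenerate case $W_e=\emptyset$, but there all the inclusions above hold trivially and the argument still yields $\gamma W_e=\emptyset=W_e$ for every $\gamma$, so no separate treatment is required.
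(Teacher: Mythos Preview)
Your proof is correct and follows essentially the same approach as the paper: both apply the Monotonicity Lemma to each map $e'\mapsto\alpha(\gamma,e')$ to show every $\gamma$ preserves inclusion, then exploit the minimality of $W_e$ in its $E$-class to obtain a contradiction. The only cosmetic difference is in the final step: the paper picks $e'$ with $W_e\subsetneq W_{e'}$ and $\gamma$ with $\gamma W_{e'}=W_e$, deduces $\gamma W_e=W_e$, and contradicts injectivity of the action, whereas you apply the minimality hypothesis to both $\gamma$ and $\gamma^{-1}$ to force $\gamma W_e=W_e$ directly.
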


\begin{proof}
  Suppose to the contrary that $\Gamma$ is a computable group acting
  on the c.e.\ sets and that $E$ is the induced orbit equivalence
  relation.  Let $\alpha$ be the computable function which witnesses
  that the action of $\Gamma$ is computable in the indices.  Then for
  each $\gamma$, the map $e\mapsto \alpha(\gamma,e)$ is well-defined
  on c.e.\ sets, and hence monotone.  Now, choose
  $e'\mathrel{E}^{ce}e$ such that $W_e\subsetneq W_{e'}$ and choose
  $\gamma\in\Gamma$ such that $\gamma W_{e'}=W_{e}$.  Now $\gamma
  W_e\subset\gamma W_{e'}=W_{e}$, so by hypothesis, $\gamma W_e=W_e$
  as well.  This contradicts that $\alpha$ really gives rise to an
  action of $\Gamma$, since elements of a group must act by injective
  functions.
\end{proof}

\begin{cor}
  $E_0^{ce}$ is not induced by any action which is computable in the
  indices.
\end{cor}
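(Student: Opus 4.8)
The plan is to read this off directly from the preceding theorem by exhibiting a suitable witness. Apply that theorem with $E = E_0$, and let $e$ be any index for the empty set, so that $W_e = \emptyset$. It then remains only to verify the two hypotheses of the theorem for this choice of $e$.

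For the first hypothesis, note that the $E_0$-equivalence class of $\emptyset$ is precisely the collection of finite subsets of $\NN$, since $A \mathrel{E_0} \emptyset$ holds exactly when $A \mathrel{\triangle} \emptyset = A$ is finite. This class is infinite, so in particular $\abs{[W_e]_{E_0}} > 2$. For the second hypothesis, suppose $e' \mathrel{E_0^{ce}} e$, i.e.\ $W_{e'} \mathrel{E_0} \emptyset$; as just observed, $W_{e'}$ is then a finite set, and trivially $W_e = \emptyset \subset W_{e'}$. Thus the containment hypothesis holds for every $e'$ in the class.

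With both hypotheses confirmed, the preceding theorem yields that $E_0$, and hence $E_0^{ce}$, is not induced by any action of a computable group which is computable in the indices, which is exactly the claim. There is essentially no obstacle to this argument: the only point requiring any care is the choice of witness, and the empty set is in fact the natural (and essentially forced) choice, since it is the unique member of its $E_0$-class that is contained in every other member; for a nonempty finite $F$ the corresponding containment would fail (e.g.\ $\emptyset \mathrel{E_0} F$ but $F \not\subset \emptyset$).
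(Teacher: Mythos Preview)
Your proof is correct and takes exactly the same approach as the paper: apply the preceding theorem with $W_e=\emptyset$, noting that its $E_0$-class consists of all finite sets (so has more than two elements) and that $\emptyset$ is contained in every member of that class. The paper's own proof is a one-line version of what you wrote.
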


\begin{proof}
  The empty set $W_e=\emptyset$ is minimal in its $E_0$ equivalence
  class, which consists of all the finite sets.
\end{proof}

This leaves open the following important question.

\begin{question}
  Is $E_0^{ce}$ computably \emph{bireducible} with an orbit relation
  induced by an action which is computable in the indices?
\end{question}

We close this section by showing that like the countable Borel
equivalence relations, the orbit relations induced by actions which
are computable in the indices admit a universal element.  This gives
some hope that the structure of the orbit equivalence relations on
c.e.\ sets will mirror that of the countable Borel equivalence
relations.

\begin{prop}
  \label{prop:universal}
  There exists an equivalence relation $E_\infty^{ce}$ which is
  induced by an action that is computable indices, and satisfies
  $E_\Gamma\leq E_\infty^{ce}$ whenever $E_\Gamma$ arises from an
  action which is computable in the indices.
\end{prop}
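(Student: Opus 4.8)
The plan is to mimic the construction of the universal c.e.\ equivalence relation $U_{ce}$ from Section~2, but now carefully arranging that the resulting relation is genuinely the orbit relation of a group action that is computable in the indices. First I would fix, for each program index $d$, an effective listing of the partial data defining a candidate computable group $\Gamma_d$ together with a candidate computable function $\alpha_d\from\NN\times\NN\into\NN$; the pair $(\Gamma_d,\alpha_d)$ is meant to witness that some orbit relation $E_{\Gamma_d}$ is computable in the indices, in the sense that $W_{\alpha_d(\gamma,e)}=\gamma W_e$. As in earlier universality arguments, we cannot decide whether a given $d$ really codes such a legitimate action, so instead we work with "approximate" actions and only commit to group-action axioms as far as they have been verified; the key point, exactly as with $U_{ce}$, is that every genuine witness $(\Gamma_d,\alpha_d)$ does get faithfully represented.

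Next I would build $E_\infty^{ce}$ on a space of triples $(d,\gamma,e)$, sliced by $d$, where the $d$\th\ slice carries (an approximation to) the orbit equivalence relation of the $d$\th\ candidate action, using the index $e$ as a stand-in for $W_e$ exactly in the style of the paper's $E^{ce}$ conventions. The universal group $\Gamma_\infty$ would be taken to be a computable copy of the free product (or restricted direct sum) of all the $\Gamma_d$, acting block-wise: an element coming from the $d$\th\ factor acts on the $d$\th\ slice by the (approximate) action of $\Gamma_d$ and trivially elsewhere, with the usual fix of using free generators keyed to stages of verification so that nothing is ever forced to act non-injectively. The map $a\mapsto$ the appropriate triple in the $d$\th\ slice then computably embeds $E_{\Gamma_d}$ into $E_\infty^{ce}$ as a reduction, precisely as the map $a\mapsto(e,a)$ did in the $U_{ce}$ proof, so $E_{\Gamma_d}\leq E_\infty^{ce}$ for every action computable in the indices. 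One must also check that $E_\infty^{ce}$ itself is induced by an action computable in the indices: the witnessing $\alpha$ for $\Gamma_\infty$ simply runs the block-wise recipe, which is computable because each generator carries a bound telling how far to run the relevant $\alpha_d$.

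I expect the main obstacle to be the same one encountered in the preceding theorems of this section: the functions $e\mapsto\alpha_d(\gamma,e)$ must be total and must genuinely act by injective, invertible maps on c.e.\ sets, yet for a spurious index $d$ the approximate data may violate this. The fix is the standard one — replace each putative generator by infinitely many free generators indexed by stages, so that a "generator" is only ever allowed to perform the finite amount of swapping/relabelling that has actually been confirmed by that stage, exactly as in the proof that every c.e.\ equivalence relation is an orbit relation of a computable action of a computable group. One then verifies, via the monotonicity lemma where needed, that for legitimate $d$ the approximation stabilises to the true action so that the reduction $E_{\Gamma_d}\leq E_\infty^{ce}$ is correct, while for illegitimate $d$ the $d$\th\ slice still carries \emph{some} orbit relation of an honest action and therefore does no harm. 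Assembling these pieces gives the desired $E_\infty^{ce}$.
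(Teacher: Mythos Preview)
Your approach differs substantially from the paper's, and the difference exposes a genuine gap. The paper does not enumerate candidate actions at all; instead it uses a single canonical \emph{shift} action. For a fixed computable group $\Gamma$, regard a c.e.\ set as coding a function $\phi\colon\Gamma\to CE$ (via columns indexed by $\Gamma$), and let $\Gamma$ act by $(\gamma\cdot\phi)(g)=\phi(g\gamma^{-1})$. This is automatically a genuine group action and is trivially computable in the indices, since it merely permutes columns. Any $\Gamma$-action computable in the indices then reduces to this shift via $e\mapsto(\text{an index for the function }g\mapsto gW_e)$, which is computable precisely because the given witness $\alpha$ is. Finally one takes $E_\infty^{ce}=U_{F_\omega}$ and notes that every computable $\Gamma'$-action is also an $F_\omega$-action, by letting the free generators act as an enumeration of $\Gamma'$. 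No ``illegitimate $d$'' ever arises, and neither the monotonicity lemma nor any stage-by-stage approximation is needed.

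The gap in your plan is the block-wise action itself. The witnesses $\alpha_d$ are functions on \emph{indices}, not on c.e.\ sets; to have your $d$th factor act on the $d$th slice of some $W$ you must extract an index for that slice, apply $\alpha_d$, and recombine. That is fine when $d$ is legitimate, but your proposed fix for illegitimate $d$ does not transfer from Section~2. The ``free generators keyed to stages'' trick worked there because swapping two specified natural numbers is always a computable bijection of $\NN$. Here the analogue would require, for each stage, a bijection of the space of c.e.\ sets that is computable in the indices and agrees with $\alpha_d$ as far as verified; but a partial run of a possibly non-total, possibly non-invertible $\alpha_d$ does not yield such a bijection, and there is no ``swap two specified c.e.\ sets'' map computable in the indices (that would require deciding $=^{ce}$). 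Nor can you decide which $d$ are legitimate in order to default to the trivial action. Without a concrete computable group and a genuine action computable in the indices on every slice, the object you describe is not yet known to be an $E_\Gamma^{ce}$ of the required form. The shift construction sidesteps all of this by fixing one manifestly correct action and absorbing all others through the reduction map.
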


\begin{proof}
  We begin by showing that for any computable group $\Gamma$ there
  exists an equivalence relation $U_\Gamma$ which is universal for
  equivalence relations induced by actions of $\Gamma$ which are
  computable in the indices.  For this, we first regard c.e.\ sets as
  codes for functions $\phi\from\Gamma\into CE$ (say by viewing them
  as subsets of $\Gamma\times\NN$ and declaring $\phi(\gamma)=$ the
  $\gamma\th$ column).  We then let $U_\Gamma$ be the equivalence
  relation induced by the action
  $(\gamma\cdot\phi)(g)=\phi(g\gamma^{-1})$.  It is easy to see that
  this action is computable in the indices.  Moreover, if $E_\Gamma$
  is the orbit relation induced by some other action of $\Gamma$ which
  is computable in the indices, then $E_\Gamma$ is reducible to
  $U_\Gamma$ via the function $f(e)=$ a program which enumerates the
  function $\phi_e\from g\mapsto gW_e$.  (Indeed, just check that
  $\gamma W_e=W_{e'}$ if and only if $\gamma^{-1}\cdot\phi_e=\phi_{e'}$.)

  Now, let $F_\omega$ denote the free group on generators
  $x_1,x_2,\ldots$.  We claim that we can take $E_\infty^{ce}$ to be
  $U_{F_\omega}$.  To see this, let $E_\Gamma$ be the orbit
  equivalence relation induced by some action of $\Gamma$ which is
  computable in the indices.  We can regard this as an action of
  $F_\omega$ by simply enumerating $\Gamma=\set{\gamma_i\mid i\in\NN}$
  and letting a word $w(x_1,\ldots x_n)$ act by the composition
  $w\circ(\gamma_1,\ldots,\gamma_n)$.  It is easily seen that this
  action is computable in the indices, and so by the previous
  paragraph $E$ is computably reducible to $U_{F_\omega}$.
\end{proof}

Figure~\ref{fig:enum} shows the two new classes defined in this
section, and the handful of equivalence relations we have considered.

\begin{figure}[ht]
\begin{tikzpicture}
  \draw[dotted] (0,0) circle (3);
  \node[fill=white] at (240:3) {enumerable};
  \draw[dotted] (0,0) circle (1.5);
  \node[fill=white] at (240:1.5) {orbit};
  \node at (-.5,-.5) (eq) {$=^{ce}$};
  \node at (-2,1) (e0) {$E_0^{ce}$} edge (eq);
  \node at (.5,.5) (egam) {$E_\Gamma^{ce}$};
  \node[fill=white] at (90:1.5) {$U_{F_\omega}$} edge (eq) edge (egam);
  \node at (0,4) (eset) {$E_\sset^{ce}$} edge (0,3);
\end{tikzpicture}
\caption{The enumerable relations.  Note that we do not know whether
  $E_0$ is bireducible with an orbit equivalence
  relation.\label{fig:enum}}
\end{figure}
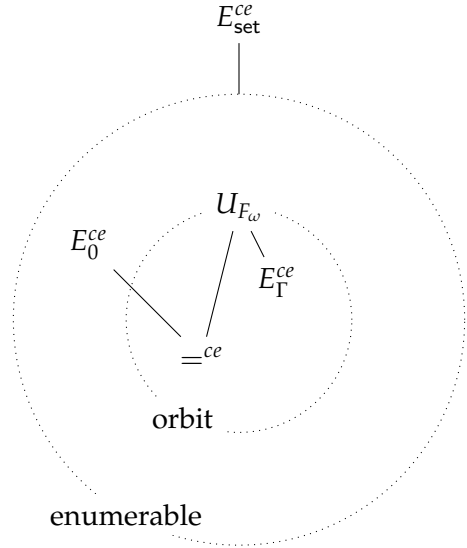

The results of this section have only scratched the tip of the
iceberg.  It would be very interesting to investigate the structure of
the orbit equivalence relations on c.e.\ sets in further detail.  For
instance, we leave the following sample questions.

\begin{question}
  Is $=^{ce}$ minimum among the relations induced by actions which are
  computable in the indices?
\end{question}

\begin{question}
  Does there exist an infinite antichain of relations induced by
  actions which are computable in the indices?
\end{question}

\section{Classification of c.e.\ structures}

We now direct our study towards what we expect will be one of the most
important applications---isomorphism relations on classes of c.e.\
structures.  As mentioned in the introduction, the isomorphism
relations play a prominent role in Borel equivalence relations; in
fact Borel reducibility was initially defined just for isomorphism
relations on classes of countable structures.  This has motivated
several authors to consider various notions of computable reducibility
between classes of countable structures.  Here we begin to use the
machinery built in earlier sections to study classes of c.e.\
structures.

\begin{defn}
  Let $\oiso_\sbin^{ce}$ denote the isomorphism relation on the codes
  for c.e.\ binary relations.  That is, let $e\iso_\sbin^{ce}e'$ if
  and only if $W_e$ and $W_{e'}$, thought of as binary relations on
  $\NN$, are isomorphic.
\end{defn}

We remark that in order to analyze the isomorphism on arbitrary
${\mathcal L}$-structures, it is enough to consider just the binary
relations, since if $\mathcal L$ is a computable language then the
isomorphism relation $\oiso_{\mathcal L}^{ce}$ on the c.e.\ $\mathcal
L$-structures is computably reducible to $\oiso_\sbin^{ce}$.  This
follows from the proof of Proposition~\ref{prop:top}, cited below.

In analogy with the Borel theory, we can study the classification
problem for c.e.\ undirected graphs, trees, linear orders, groups, and
so on by considering the restriction of $\oiso_\sbin^{ce}$ to the
class of indices for such structures.  Unfortunately, we immediately
confront the difficulty that these restrictions are not total, and so
far we have not addressed reducibility for relations which are not
defined on all of $\NN$.

In many practical situations, we can work around this difficulty.  For
instance, rather than identify graphs with binary relations on $\NN$,
we can identify them with subsets of a fixed computable copy $\Gamma$
of the random graph.  Thus, we formally define $\oiso_\sgraph^{ce}$ to
be isomorphism relation on the c.e.\ subsets of $\Gamma$.  These two
coding methods yield equivalent results in following sense: there is a
computable reduction from $\oiso_\sgraph^{ce}$ to $\oiso_\sbin^{ce}$
taking values in the indices for undirected graphs, and there is a
computable function $f$ such that whenever $W_e,W_{e'}$ code
undirected graphs then $W_e\iso_\sbin W_{e'}$ if and only if
$W_{f(e)}\iso_\sgraph W_{f(e')}$ (the last fact follows from
Proposition~\ref{prop:top} below).  Similarly, we can consider the
relation $\oiso_\slo^{ce}$ on the c.e.\ subsets of $\QQ$, and
$\oiso_\stree^{ce}$ on the downward closure of c.e.~subsets of
$\NN^{<\NN}$.

\begin{prop}
  \label{prop:top}
  $\oiso_\sbin^{ce}$ is computably bireducible with each of
  $\oiso_\sgraph^{ce}$, $\oiso_\slo^{ce}$, and $\oiso_\stree^{ce}$.\qed
\end{prop}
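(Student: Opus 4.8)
The plan is to prove each of the three bireducibilities by pairing an easy reduction with a substantive one. The easy directions $\oiso_\sgraph^{ce}\leq\oiso_\sbin^{ce}$, $\oiso_\slo^{ce}\leq\oiso_\sbin^{ce}$ and $\oiso_\stree^{ce}\leq\oiso_\sbin^{ce}$ are immediate: if $W_e$ is a c.e.\ subset of the fixed computable copy $\Gamma$ of the random graph, then the graph that $\Gamma$ induces on $W_e$ is already (a code for) a c.e.\ binary relation on $\NN$ --- c.e.\ because the edge relation of $\Gamma$ is computable --- and two such subsets of $\Gamma$ are isomorphic as graphs exactly when the associated binary relations are isomorphic, so the map sending $e$ to a program enumerating the induced relation is a computable reduction. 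Using instead the fixed computable copies of $\QQ$ and of $\NN^{<\NN}$, with the induced linear order resp.\ the induced tree order, handles $\oiso_\slo^{ce}$ and $\oiso_\stree^{ce}$ in the same way.

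For the reverse direction in the graph case, $\oiso_\sbin^{ce}\leq\oiso_\sgraph^{ce}$, I would use an effective version of the classical coding of a binary relation by a graph. Reserve a vertex $v_x$ for each $x\in\NN$, keep all the $v_x$ pairwise non-adjacent, and proceed as $W_e$ is enumerated: when $x$ first occurs in a pair of $W_e$, attach to $v_x$ a \emph{fresh} rigid ``identity tag'' (a small gadget whose shape marks $v_x$ as a vertex rather than an auxiliary point); when a pair $(x,y)$ is enumerated, attach a \emph{fresh} ``arrow gadget'' --- a short path from $v_x$ to $v_y$ through new internal vertices --- whose two ends carry tags of distinct finite sizes, so that no automorphism of the coding graph can reverse an arrow. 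The crucial design feature is that every edge of the coding graph, at the moment it appears, is incident to a vertex not previously placed. This lets us maintain, stage by stage, an isomorphism of the current finite approximation of the coding graph onto an induced subgraph of $\Gamma$, extending it whenever new vertices appear by invoking the extension property of the random graph; that property is computable in a computable copy of $\Gamma$, since for finite disjoint $A,B\subseteq\Gamma$ one can effectively search for (and will find) a vertex adjacent to every element of $A$ and to none of $B$. The c.e.\ set of vertices of $\Gamma$ so placed is $W_{f(e)}$, and a routine rigidity analysis of the gadgets shows that an isomorphism of coding graphs restricts to a bijection of the $v_x$ respecting the arrows, hence induces an isomorphism of the coded relations, and conversely that any isomorphism of the relations lifts.

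The linear-order and tree cases run along the same lines. For $\oiso_\sbin^{ce}\leq\oiso_\slo^{ce}$ one uses the classical coding of binary relations by linear orders: replace each $v_x$ by a copy of a fixed rigid finite ``block'', keep distinct blocks separated, and record an arc $(x,y)$ by inserting, at the stage $(x,y)$ enters $W_e$, fresh points into a predetermined region of $x$'s block that encode the identity of $y$ and the direction, arranged so that no order comparison between previously placed points is ever revised; since the fixed computable copy of $\QQ$ is dense and without endpoints, the growing coding order embeds into it as it is enumerated, just as above. For $\oiso_\sbin^{ce}\leq\oiso_\stree^{ce}$ one applies an effective form of the classical coding of relational structures by trees (going back to Friedman and Stanley \cite{friedman}), which produces trees of unbounded height: the coding is a computable operation on the enumeration of $W_e$, it only ever adds new nodes, never altering the ancestor relation among nodes already present, and its output is directly the downward closure of a c.e.\ set of strings in $\NN^{<\NN}$, so here no ambient universal structure is needed.

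The main obstacle I expect is not any one of these gadget constructions, which are standard, but arranging all three so that the coding structure is enumerated with this ``fresh-incidence'' property, and then checking that this genuinely makes the required embedding into the fixed computable copy of $\Gamma$ (resp.\ $\QQ$) computable, with no placement ever needing to be retracted. A secondary source of fussiness is that we are coding arbitrary binary relations rather than undirected graphs, so every connecting gadget must be asymmetric enough to record the direction of an arc (and one must also accommodate loops); once the gadgets are pinned down, the verification that an isomorphism of codes induces, and is induced by, an isomorphism of the coded relations is routine gadget-rigidity bookkeeping.
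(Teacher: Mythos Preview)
Your proposal is correct and takes essentially the same approach as the paper: the paper's proof consists solely of the remark that ``the usual reductions go through in our context as well (see \cite{gao}) \ldots\ because the reductions only make use of the \emph{positive} information about the structures,'' and your detailed gadget constructions together with the ``fresh-incidence'' embedding into the ambient universal structure are precisely an unpacking of that sentence. In short, you have written out what the paper leaves as a citation.
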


The point is that the usual reductions go through in our context as
well (see \cite{gao} for an elegant presentation).  Intuitively, this
is because the reductions only make use of the \emph{positive}
information about the structures; that is, they need to know when two
elements are related, but not when two elements are non-related.

In terms of our hierarchy of equivalence relations, $\oiso_\sbin^{ce}$
is very high.  For one thing, it follows from results of
\cite{knight:iso} that it is $\Sigma^1_1$-complete.  It also lies
above most of the relations considered thus far; as an example we show
the following:

\begin{prop}
  $E_\sset^{ce}$ lies properly below $\oiso_\sbin^{ce}$.
\end{prop}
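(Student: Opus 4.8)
The plan is to prove two things: first, that $E_\sset^{ce}$ reduces to $\oiso_\sbin^{ce}$, and second, that $\oiso_\sbin^{ce}$ does not reduce to $E_\sset^{ce}$.

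For the positive direction, I would code a c.e.\ sequence of c.e.\ sets $(A_n)$ by a c.e.\ graph (or binary structure) whose isomorphism type records exactly the \emph{set} $\{A_n\mid n\in\NN\}$. The natural device is to attach, for each column $A_n$, a ``gadget'' whose isomorphism type encodes the set $A_n\subset\NN$ — for instance a base point with a chain of length $k+2$ hanging off it for each $k\in A_n$ (using the positive information only, so that the construction is computable from an index for $\bigcup_n\{n\}\times A_n$). Since isomorphism of the whole structure amounts to a bijective matching of the gadgets, and each gadget's type determines and is determined by the corresponding $A_n$ as a subset of $\NN$, two such coded structures are isomorphic iff the two families $\{A_n\}$ and $\{B_n\}$ coincide as sets — which is exactly $E_\sset$. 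One must take a little care that distinct $A_n$'s give non-isomorphic gadgets and that the ambient graph has no unintended automorphisms identifying gadgets of different types; anchoring each gadget to a rigid ``spine'' (e.g.\ a ray of distinguished length, or using the random-graph coding of Proposition~\ref{prop:top}) handles this. Then compose with the bireduction $\oiso_\sgraph^{ce}\leq\oiso_\sbin^{ce}$ of Proposition~\ref{prop:top}.

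For the negative direction, the cleanest route is an arithmetic-complexity argument. By Theorem~\ref{thm:e3} the relation $E_\sset^{ce}$ is $\Pi^0_3$ (indeed $\Pi^0_3$-complete), whereas $\oiso_\sbin^{ce}$ is $\Sigma^1_1$-complete by \cite{knight:iso}, as noted just above the statement; in particular $\oiso_\sbin^{ce}$ is not arithmetic, so it cannot computably (hence $m$-)reduce to the arithmetic relation $E_\sset^{ce}$. Concretely, if $f$ were a reduction then $\{\la e,e'\ra : W_e\iso_\sbin W_{e'}\}$ would be $m$-reducible to the $\Pi^0_3$ set $E_\sset^{ce}$ via $\la e,e'\ra\mapsto\la f(e),f(e')\ra$, making $\oiso_\sbin^{ce}$ itself $\Pi^0_3$, contradicting $\Sigma^1_1$-completeness.

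The main obstacle is entirely in the positive direction: getting the coding rigid enough that \emph{isomorphism} of the coded structures is neither coarser nor finer than equality-of-families-as-sets. The subtle point is that $E_\sset$ ignores multiplicity and order of the columns, so the coding must too — it is not enough that isomorphic structures have the same family; one also needs that permuting or duplicating columns, or re-enumerating the same $A_n$ under a different index, produces an \emph{isomorphic} structure. Using unordered gadgets attached to a common rigid base (and allowing arbitrarily many copies of each gadget type, which matches ``set'' semantics since duplication is invisible) is the way to arrange this; verifying it is the one place where the argument needs genuine care rather than routine checking.

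\begin{proof}[Sketch of proof]
  To see $E_\sset^{ce}\leq\oiso_\sbin^{ce}$, given an index for a c.e.\ subset of $\NN\times\NN$, viewed as a sequence $(A_n)$, build a c.e.\ graph consisting of a fixed rigid ``spine'' together with, for each $n$, a gadget $G_n$ attached to the spine whose isomorphism type (relative to the spine) codes $A_n\subset\NN$ using only positive information — e.g.\ a distinguished vertex carrying, for each $k\in A_n$, a pendant path of length $k+2$. Distinct subsets of $\NN$ yield non-isomorphic gadgets, and an isomorphism of two such coded graphs must fix the spine and hence induce a bijection of the gadget-families preserving isomorphism type; thus the two graphs are isomorphic iff $\{A_n\mid n\in\NN\}=\{B_n\mid n\in\NN\}$, i.e.\ iff $(A_n)\mathrel{E_\sset^{ce}}(B_n)$. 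Composing with Proposition~\ref{prop:top} gives a reduction into $\oiso_\sbin^{ce}$.

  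The reduction is strict. By Theorem~\ref{thm:e3}, $E_\sset^{ce}$ is $\Pi^0_3$, hence arithmetic. But $\oiso_\sbin^{ce}$ is $\Sigma^1_1$-complete by \cite{knight:iso}, so it is not arithmetic; therefore there is no computable $f$ with $W_e\iso_\sbin W_{e'}\iff W_{f(e)}\mathrel{E_\sset^{ce}}W_{f(e')}$, as such an $f$ would exhibit $\oiso_\sbin^{ce}$ as $m$-reducible to an arithmetic set.
\end{proof}
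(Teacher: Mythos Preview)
Your overall strategy matches the paper's: code the family $\{A_n\}$ as a structure whose isomorphism type records exactly the underlying set, and separate the relations by complexity. The paper's coding is more concise---it sends $W_e$ to a tree coding $W_e$ as a hereditarily countable set (so that tree isomorphism coincides with set equality via standard machinery).

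There is, however, a genuine gap in your positive direction. You correctly note that duplicating columns must yield an \emph{isomorphic} output, but the construction in your proof sketch attaches exactly one gadget $G_n$ per column $n$. Take $A_0=A_1=\{0\}$ with all other $A_n=\emptyset$, versus $B_0=\{0\}$ with all other $B_n=\emptyset$: then $\{A_n\}=\{B_n\}$, yet your graph for $(A_n)$ carries two gadgets of type $\{0\}$ while your graph for $(B_n)$ carries one, so the two graphs are not isomorphic. The fix you allude to (``allowing arbitrarily many copies of each gadget type'') must actually be implemented in the construction: for each $n$ attach $\omega$ copies of $G_n$, so that every gadget type present occurs with multiplicity $\aleph_0$ and the count becomes invisible to isomorphism. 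The paper's hereditarily-countable-set tree coding packages this issue away.

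A minor correction on the negative side: Theorem~\ref{thm:e3} establishes $\Pi^0_3$-\emph{hardness} (via a $\Pi^0_3$-complete equivalence class), not an upper bound on $E_\sset^{ce}$; in fact $E_\sset^{ce}$ is $\Pi^0_4$, since equality of c.e.\ sets is $\Pi^0_2$ and $E_\sset$ places a $\forall n\,\exists m$ in front. Your argument survives unchanged, since $\Pi^0_4$ is still arithmetic and $\oiso_\sbin^{ce}$ is $\Sigma^1_1$-complete; the paper uses exactly this complexity separation.
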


\begin{proof}
  To reduce $E_\sset^{ce}$ to $\oiso_\sbin^{ce}$, we simply let
  $W_{f(e)}$ be an code for $W_e$, thought of as a hereditarily
  countable set.  In other words, $W_{f(e)}$ is a tree coding the
  transitive closure of $W_e$.  The absence of any reverse reduction
  follows from complexity, since $E_\sset^{ce}$ is $\Pi^0_4$.
\end{proof}

We close our discussion of isomorphism relations by considering the
case of groups.  Classically, the isomorphism relation for
countable groups is Borel bireducible with the other relations
addressed in Proposition~\ref{prop:top}.  However, in our case there
are once again several possible coding methods.  First, we can let
\emph{group} be the set of indices $e$ such that $W_e$, thought of as
a subset of $\NN\times\NN\times\NN$, satisfies the laws for the
multiplication function for a group.  We then define
$\oiso_\sgroup^{ce}$ to be the restriction of the isomorphism relation
$\oiso_\stern^{ce}$ on the c.e.\ ternary relations to
\emph{group}. (Note that all of the elements of \sgroup\ are in fact
computable, because they are c.e.\ functions.)

Alternatively we can code a group by a presentation, that is, a set of
words in $F_\omega$, thinking of the group as the corresponding
quotient. We thus let $e\iso_\spres^{ce}e'$ if and only if $W_e$ and
$W_{e'}$, thought of as sets of relations in $F_\omega$, determine
isomorphic groups. (Note that all groups with c.e.\ presentations
actually have computable presentations by Craig's trick.) This
relation has the advantage of being defined everywhere, but it does
not reflect the same classification problem as $\oiso_\sgroup^{ce}$.
In fact the classification problem for groups splits into two separate
problems: that for computable group multiplication functions, and that
for computably presented groups.

\begin{prop}
  $\oiso_\sgroup^{ce}\leq\oiso_\sbin^{ce}\leq\oiso_\spres^{ce}$.
\end{prop}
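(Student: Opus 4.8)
The plan is to establish the two reductions separately, both by exhibiting explicit computable maps on indices. For the first inequality $\oiso_\sgroup^{ce}\leq\oiso_\sbin^{ce}$, the idea is the standard one: a group multiplication table, viewed as a subset of $\NN\times\NN\times\NN$, is a particular kind of ternary relation, and isomorphism of groups is just isomorphism of these ternary structures. So I would first note (as the paper already observes in passing for the general $\mathcal L$-structure case, citing the proof of Proposition~\ref{prop:top}) that $\oiso_\stern^{ce}$ is computably reducible to $\oiso_\sbin^{ce}$ by coding a ternary relation $R(x,y,z)$ as a binary relation on a larger domain — e.g. introduce new ``labelled'' vertices to mark triples, using the positive-information-only trick that makes all the Proposition~\ref{prop:top} reductions go through. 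Restricting this reduction to indices for elements of $\sgroup$ gives $\oiso_\sgroup^{ce}\leq\oiso_\stern^{ce}\leq\oiso_\sbin^{ce}$. (One should double-check that the reduction, applied to an index that happens to enumerate a genuine group multiplication function, produces an index for a binary relation isomorphic to the coded group in a way that depends only on the isomorphism type — but this is exactly the content of the positive-reduction method already invoked.)

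For the second inequality $\oiso_\sbin^{ce}\leq\oiso_\spres^{ce}$, the plan is to reduce isomorphism of c.e.\ binary relations to isomorphism of groups given by presentations, using a ``coding structures by groups'' construction. The natural approach is Mekler-style coding, or more simply the classical device that takes a graph (or binary relation) $G$ and builds a group $G^*$ — for instance a suitable nilpotent group, or a right-angled Artin/Coxeter-type group, or the group with one generator per vertex and relations forcing commutation exactly along edges — so that $G_1\cong G_2$ iff $G_1^*\cong G_2^*$. Since I may assume $\oiso_\sbin^{ce}$ is bireducible with $\oiso_\sgraph^{ce}$ by Proposition~\ref{prop:top}, I can work with graphs. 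The key point to verify for our setting is that the passage from (an index enumerating) a c.e.\ graph $W_e$ to (an index enumerating) a presentation of $W_e^*$ is computable: the generators are indexed by vertices and the relators are words determined in a uniform computable way by the edges and non-edges being enumerated — and again only positive information about the graph is needed to enumerate the relator set, since a non-edge simply corresponds to the absence of a commutation relator, which is automatically handled by enumerating only the relators we do want. Then $W_e\iso_\sgraph W_{e'}$ iff the two presented groups are isomorphic, which is precisely $f(e)\iso_\spres^{ce} f(e')$.

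The main obstacle is the faithfulness of the group-coding construction in the c.e.\ (rather than classical) setting, specifically the direction ``$W_e^*\cong W_{e'}^*$ implies $W_e\cong W_{e'}$.'' Classical coding theorems recover the graph from the group by an algebraic definition (e.g. identifying the vertex generators as a definable or canonically-describable subset, and edges as commuting pairs), and that recovery argument is purely algebraic and so transfers verbatim — but one must make sure the construction is robust enough that it does not accidentally depend on the group presentation being finite or the graph being computable rather than merely c.e. The cleanest route is probably to cite an existing faithful functorial coding of graphs into finitely-generated-relator-by-vertex presentations that is manifestly uniform, and then simply remark that uniformity plus the positive-information observation makes $f$ computable in our sense; the algebraic faithfulness is then inherited with no change. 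I would also flag that the relators may need to be of bounded form so that enumerating them is straightforward, and that one should restrict the reduction's image to lie within the c.e.\ presentations (which is automatic). With those points checked, composing gives the displayed chain.
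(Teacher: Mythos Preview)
Your proposal is correct and follows essentially the same route as the paper: for the first inequality you use the identity inclusion of \sgroup\ into c.e.\ ternary relations composed with the Proposition~\ref{prop:top}-style coding of ternary into binary, and for the second you invoke a Mekler-type coding of graphs into group presentations and check that it is uniform and uses only positive information. The paper's own proof is more terse---it simply points to the identity map for the first reduction and cites Mekler for the second---but the content is the same.
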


We suspect that neither reduction is reversible.

\begin{proof}
  Of course $\oiso_\sgroup^{ce}$ is computably reducible to
  $\oiso_\stern^{ce}$ in the sense that there is a computable function
  (the identity) which, when restricted to \emph{group}, satisfies
  $e\iso_\sgroup^{ce}e'$ if and only if $f(e)\iso_\stern^{ce}f(e')$.
  Hence, $\oiso_\sgroup^{ce}$ is reducible to $\oiso_\sbin^{ce}$ in
  the same sense.  To see that $\oiso_\sbin^{ce}\leq\oiso_\spres^{ce}$
  one need only inspect the classical argument of \cite{mekler}, which
  yields group presentations, and check that it can be done in our
  context as well.
\end{proof}

We next consider the \emph{computable isomorphism} equivalence
relations on these classes of structures.

\begin{defn}
  Let $\ociso_\sbin^{ce}$ denote the computable isomorphism relation
  on the space of c.e.\ binary relations.  That is
  $e\ciso_\sbin^{ce}e'$ if and only if $W_e$ and $W_{e'}$, thought of
  as codes for binary relations on $\NN$, are isomorphic via a
  computable bijection.
\end{defn}

The results of Proposition~\ref{prop:top} apply also to the case of
computable isomorphism.  For instance if $\phi$ is a computable
isomorphism between $W_e$ and $W_{e'}$, and $f$ is the reduction given
in \cite{gao} from binary relations to to graphs, then it is easy to
use $\phi$ to find a computable isomorphism between $W_{f(e)}$ and
$W_{f(e')}$.  This need not always hold, since sometimes one is able
to show that $W_e\iso W_{e'}$ if and only if $W_{f(e)}\iso W_{f(e')}$
without necessarily constructing the isomorphisms explicitly.  But it
is not difficult to check that it does hold for the examples in this
section.

\begin{prop}
  The computable isomorphism relation on the class of c.e.\ binary
  relations, graphs, linear orders and trees are all computably
  bireducible.\qed
\end{prop}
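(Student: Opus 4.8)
The plan is to recycle, one at a time, the very same reductions $f$ that witness Proposition~\ref{prop:top}, and to check that each of them is in fact a reduction for computable isomorphism as well. Recall that each such $f$ is given by a uniform gadget-replacement recipe: to an index $e$ it assigns an index $f(e)$ for a structure built by replacing each element (and, where relevant, each related pair) of $W_e$ by a fixed finite gadget and wiring the gadgets together according to the positive diagram of $W_e$. Two observations make these recipes work in the computable-isomorphism category. First, the forward direction: if $\phi$ is a computable isomorphism $W_e\to W_{e'}$, then $\phi$ lifts canonically to a map $\hat\phi\from W_{f(e)}\to W_{f(e')}$, sending the gadget attached to $x$ to the gadget attached to $\phi(x)$ by the identity on gadget-coordinates; this $\hat\phi$ is an isomorphism, and it is computable because $\phi$ is and because the gadget-to-index bookkeeping is itself computable and uniform. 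Hence $W_{f(e)}\ociso^{ce}W_{f(e')}$ whenever $W_e\ociso^{ce}W_{e'}$.

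Second, the reverse direction. Each coding embeds $W_e$ into $W_{f(e)}$ in a way recoverable \emph{computably and isomorphism-invariantly} from $W_{f(e)}$: the image of the original domain is a set of distinguished nodes cut out by an existential condition on the positive diagram (``$x$ is the apex of a gadget of the prescribed shape''), and the original relation on those nodes is read off from the wiring, again using only positive information. Consequently, if $\psi$ is a computable isomorphism $W_{f(e)}\to W_{f(e')}$, it must carry distinguished nodes to distinguished nodes and gadgets to gadgets, so the restriction of $\psi$ to the distinguished nodes, transported back through the computable identifications of distinguished nodes with original domain elements, is a computable isomorphism $W_e\to W_{e'}$. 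Thus $f$ is a reduction for $\ociso^{ce}$ in both directions. Chaining these reductions around the cycle $\ociso_\sbin^{ce}\le\ociso_\sgraph^{ce}\le\ociso_\slo^{ce}\le\ociso_\stree^{ce}\le\ociso_\sbin^{ce}$ — the last link using, exactly as in Proposition~\ref{prop:top}, that our coding of trees as downward closures of c.e.\ subsets of $\NN^{<\NN}$ is computably interchangeable with a c.e.\ binary-relation coding — yields the asserted bireducibility of all four.

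The delicate link is the one involving linear orders, since that classical reduction is the least local of the bunch: a vertex becomes a convex block inside a dense scaffold, and adjacency is encoded in the order types and relative positions of these blocks. Here one must use a version of the coding in which the blocks are recognizable from the order alone — for instance by taking the scaffold to be a computable copy of $\QQ$ and each vertex-block to be a finite interval whose length encodes a vertex tag, so that the blocks are precisely the maximal finite convex pieces and the edge data sits in designated finite convex pieces between them — and then verify that any isomorphism of the resulting orders is forced to match blocks with blocks. Granting that, a computable isomorphism of the orders restricts to a computable bijection of the block-sets respecting adjacency, which is exactly a computable isomorphism of the graphs, and conversely a computable graph isomorphism assembles block-by-block into a computable order isomorphism. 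The remaining links are genuinely local gadget constructions and present no difficulty beyond the routine bookkeeping sketched above; indeed the reductions of \cite{gao} were already observed in Proposition~\ref{prop:top} to be computable, and the content of the present argument is only that they are \emph{effectively invertible on isomorphisms}.
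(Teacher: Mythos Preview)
Your approach is essentially the same as the paper's: the paper's entire argument (given in the paragraph preceding the \qed) is precisely that the reductions from Proposition~\ref{prop:top} can be reused, since for these particular codings a computable isomorphism $\phi$ between $W_e$ and $W_{e'}$ can be explicitly lifted to one between $W_{f(e)}$ and $W_{f(e')}$, and vice versa. You have fleshed out both directions in more detail than the paper does---in particular your discussion of the reverse direction (pulling a computable isomorphism back through the gadget coding) makes explicit what the paper merely gestures at with ``it is not difficult to check.''

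One small remark: your extended treatment of the graphs-to-linear-orders link, with its ad hoc block construction, is more speculative than necessary. The paper simply defers to the reductions in \cite{gao}, which route through trees rather than attempting a direct graph-to-order coding, and those standard constructions are local enough that the isomorphism-lifting is routine in both directions. Your overall argument is correct, but you would be on firmer ground citing the standard chain (e.g., binary relations $\to$ graphs $\to$ trees $\to$ linear orders, with the trivial inclusions closing the cycle) rather than sketching a bespoke order coding whose block-recognition step you yourself flag as delicate.
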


On the other hand, the computable isomorphism relation lies much lower
in the hierarchy than the isomorphism relation.

\begin{prop}
  $\ociso_\sbin^{ce}$ lies properly below $E_\sset^{ce}$.
\end{prop}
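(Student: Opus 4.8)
The plan is to establish both directions: that $\ociso_\sbin^{ce}$ is computably reducible to $E_\sset^{ce}$, and that there is no reduction in the reverse direction. For the upper bound, recall that two c.e.\ binary relations $W_e$ and $W_{e'}$ are computably isomorphic just in case there is a computable bijection of $\NN$ carrying one to the other. The key observation is that, since the ambient domain is $\NN$ and the witnessing isomorphism is required to be computable, we can enumerate \emph{all} potential computable isomorphisms: for each index $n$, let $\phi_n$ be the $n\th$ partial computable function, and consider the structure $\phi_n(W_e)$ — the image of the binary relation $W_e$ under $\phi_n$, which makes sense as a c.e.\ set whenever $\phi_n$ happens to act injectively on the relevant part of $\NN$ (and we can dovetail so that the image is always enumerable, simply declining to enumerate a pair once $\phi_n$ reveals itself to be non-injective or undefined where needed). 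First I would fix a computable function $\alpha(n,e)$ producing an index for this image $\phi_n(W_e)$ (suitably guarded). Then $W_e \ciso_\sbin^{ce} W_{e'}$ if and only if $W_{e'} = \phi_n(W_e)$ for some $n$ such that $\phi_n$ is a total computable permutation, i.e.\ if and only if $W_{e'}$ appears in the list $\{W_{\alpha(n,e)} : n \in \NN\}$ — modulo the bookkeeping of which $n$ give genuine permutations. In other words, $\ociso_\sbin^{ce}$ is essentially \emph{enumerable in the indices} in the sense of Section~5, and so by the proposition there showing that enumerable relations reduce to $E_\sset^{ce}$, we obtain $\ociso_\sbin^{ce} \leq E_\sset^{ce}$ by sending $e$ to a program for the c.e.\ subset of $\NN \times \NN$ whose $n\th$ column is $W_{\alpha(n,e)}$.

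For the strictness, I would argue by a complexity count, as in the analogous results earlier in the paper (Theorem~\ref{thm:e3}, the proposition placing $E_\sset^{ce}$ properly below $\oiso_\sbin^{ce}$). Since $E_\sset^{ce}$ is $\Pi^0_4$-complete (noted after Theorem~\ref{thm:e3}), it suffices to check that $\ociso_\sbin^{ce}$ sits at a strictly lower level of the arithmetic hierarchy. Indeed, $e \ciso_\sbin^{ce} e'$ asserts the existence of a total computable permutation $\phi$ with $W_{e'} = \phi(W_e)$; the quantifier over $\phi$ is an existential number quantifier (over indices), ``$\phi_n$ is a total permutation'' is $\Pi^0_2$, and ``$W_{e'} = \phi_n(W_e)$'' is $\Pi^0_2$ as well (it is a $=^{ce}$-type condition between two c.e.\ sets), so the whole relation is $\Sigma^0_3$. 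Since $\Sigma^0_3 \subsetneq \Pi^0_4$ and $E_\sset^{ce}$ has a $\Pi^0_4$-complete (or at least properly $\Pi^0_3$, hence non-$\Sigma^0_3$) equivalence class by the results following Theorem~\ref{thm:e3}, no computable reduction can carry $E_\sset^{ce}$ down to $\ociso_\sbin^{ce}$; such a reduction would pull a $\Pi^0_3$-hard (or $\Pi^0_4$-hard) class back to a $\Sigma^0_3$ set, a contradiction.

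The main obstacle I anticipate is the first step: making precise the claim that $\ociso_\sbin^{ce}$ is enumerable in the indices. The subtlety is that $\phi_n(W_e)$ is only well-behaved when $\phi_n$ is a total computable permutation, and we need a \emph{total} computable $\alpha(n,e)$ producing a c.e.\ index regardless. The standard fix is to have $\alpha(n,e)$ run $\phi_n$ on larger and larger initial segments, enumerating the pair $(\phi_n(a),\phi_n(b))$ into $W_{\alpha(n,e)}$ when $(a,b)$ is enumerated into $W_e$ and $\phi_n$ has converged on all relevant values with no injectivity violation observed so far; if a violation or divergence is later detected, $\alpha(n,e)$ simply stalls (enumerating nothing further), which is harmless since that $n$ was never going to be a witness anyway. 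One should check that with this convention, for every $e$ and every total computable permutation $\phi_n$, $W_{\alpha(n,e)} = \phi_n(W_e)$ exactly, so the enumeration faithfully lists the computable-isomorphism class of $W_e$ together with some junk sets that can never be spuriously $=^{ce}$ to a genuine member — and even if some junk set did coincide, it would only witness a true computable isomorphism, so no false positives arise. Once this is nailed down the rest is routine, invoking the enumerable-implies-$\leq E_\sset^{ce}$ proposition and the arithmetic-hierarchy bound.
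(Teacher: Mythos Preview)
Your complexity argument for the non-reduction is correct and matches the paper's: $\ociso_\sbin^{ce}$ is $\Sigma^0_3$, while by Theorem~\ref{thm:e3} $E_\sset^{ce}$ has a $\Pi^0_3$-complete equivalence class, precluding any computable reduction from $E_\sset^{ce}$ down to $\ociso_\sbin^{ce}$.

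The construction of the reduction $\ociso_\sbin^{ce}\leq E_\sset^{ce}$, however, has a genuine gap. For your map to be a reduction to $E_\sset^{ce}$ you need $W_e\ciso W_{e'}$ to imply $\{W_{\alpha(n,e)}:n\}=\{W_{\alpha(n,e')}:n\}$ as \emph{sets}. For indices $n$ with $\phi_n$ a total permutation this is fine, but for a bad $n$ your ``stall'' leaves a junk set that depends on how much of $W_e$ has been enumerated by the stage at which the failure of $\phi_n$ is detected---hence on the particular program $e$, not merely on the $\ciso$-class of $W_e$. Two computably isomorphic relations can therefore deposit different junk on the same bad column $n$, and that junk need not appear anywhere in the other's sequence, so the forward direction of the reduction breaks. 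Your ``no false positives'' remark concerns only the backward direction and does not address this. The paper repairs the problem with a device you are missing: it works over $\NN\cup\{-1\}$, pre-loads \emph{every} finite set of the form $F\cup\{-1\}$ onto the odd columns, and on the even columns runs pairs $(p,p')$ hoping they code a bijection and its inverse; upon detecting failure it enumerates the marker $-1$ into that column and stops. Every junk column is then a finite set containing $-1$, hence already present among the odd columns, so the total set of columns is $[W_e]_{\ciso}$ together with a fixed family independent of $e$, and the reduction goes through. (An alternative fix, closer in spirit to your outline, is to arrange that every index yields an honest computable permutation: follow $\phi_p$ while $(\phi_p,\phi_{p'})$ behave like inverse bijections, and upon failure extend the finite partial injection built so far to a total permutation by back-and-forth. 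Then every column genuinely lies in $[W_e]_{\ciso}$, and $\ociso_\sbin^{ce}$ really is enumerable in the indices.)
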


\begin{proof}
  Let $W_e\subset\NN$ be a c.e.\ set, which we shall think of as
  coding a binary relation.  We will create a c.e.\ subset $W_{f(e)}$
  of $\NN\cup{-1}\times\NN$ whose columns consist of all computably
  isomorphic copies of $W_e$ whose domain is a subset of $\NN$,
  together with all finite sets of the form $F\cup{-1}$.  To do this,
  $f(e)$ first arranges to write all finite sets of the form
  $F\cup{-1}$ onto the odd-numbered columns.  Then, it considers all
  pairs of programs $p,p'$, hoping in each case that $W_p$ codes a
  bijection $\phi_p$ of $\NN$ with itself and $W_{p'}$ codes its
  inverse.  As $p$ is simulated, we write $\phi_p$ applied to the
  graph $W_e$ onto the column $2n_{p,p'}$ (where $n_{p,p'}$ is a code
  for the pair $(p,p')$).  If $W_p$ and $W_{p'}$ do not turn out to
  code a bijection and its inverse then this will be apparent at some
  stage of simulation, and at that point we enumerate $-1$ into column
  $2n_{p,p'}$ and then stop writing to that column.  It is not
  difficult to check that this reduction is as desired.

  Finally, to see that $E_\sset^{ce}$ is not reducible to
  $\ociso_\sbin^{ce}$ simply compute that $\ociso_\sbin^{ce}$ is
  $\Sigma^0_3$, but it follows from Theorem~\ref{thm:e3} that in
  $E_\sset^{ce}$ there is a $\Pi^0_3$ complete equivalence class.
\end{proof}

The hierarchy of isomorphism relations considered in this section is
shown in Figure~\ref{fig:iso}.

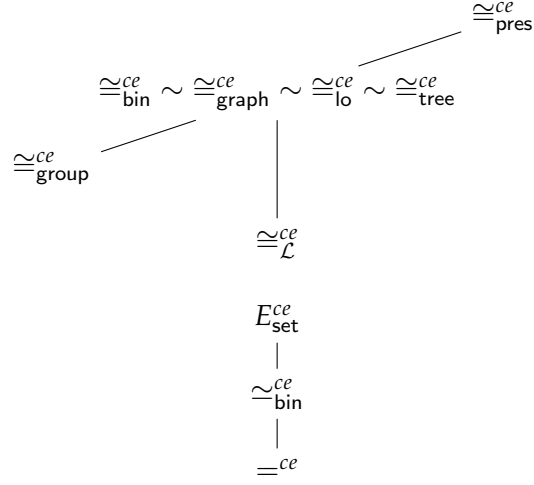
\begin{figure}[ht]
\begin{tikzpicture}
  \node at (0,0) (eq) {$=^{ce}$};
  \node at (0,1) (comp) {$\ociso_\sbin^{ce}$} edge (eq);
  \node at (0,2) (set) {$E_\sset^{ce}$} edge (comp);
  \node at (0,3) (isol) {$\oiso_{\mathcal L}^{ce}$};
  \node at (0,5) (iso) {$\oiso_\sbin^{ce}\sim\oiso_\sgraph^{ce}\sim\oiso_\slo^{ce}\sim\oiso_\stree^{ce}$} edge (isol);
  \node at (-3,4) {$\oiso_\sgroup^{ce}$} edge (iso);
  \node at (3,6) {$\oiso_\spres^{ce}$} edge (iso);
\end{tikzpicture}
\caption{Relationships between the isomorphism equivalence relations
  considered in this paper.\label{fig:iso}}
\end{figure}

\section{Relations from computability theory}

Some of the most important examples of relations on c.e.\ sets are
those arising from computability theory itself.  In this section, we
consider the degree-theoretic equivalence relations, fitting them into
the computable reducibility hierarchy.  We then briefly generalize our
method of dealing with the c.e.\ sets to handle the larger class of
$n$-c.e.\ sets.

We begin with the degree-theoretic equivalence relations.

\begin{thm}
  $=^{ce}$ lies properly below each of $\oequiv_T^{ce}$,
  $\oequiv_1^{ce}$, and $\oequiv_m^{ce}$ in the computable
  reducibility hierarchy.
\end{thm}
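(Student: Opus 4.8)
The plan is to handle the two claims of each statement separately: first that $=^{ce}$ computably reduces to each of $\oequiv_T^{ce}$, $\oequiv_1^{ce}$, $\oequiv_m^{ce}$, and second that none of these three relations reduces back to $=^{ce}$. For the second, easier direction, I would argue by complexity, exactly as in the proofs of Theorems~\ref{thm:maxtomin} and~\ref{thm:mintomax}. The relation $=^{ce}$ is $\Pi^0_2$ by Proposition~\ref{prop:eqce}, so it suffices to exhibit in each of the three degree relations a single equivalence class of complexity strictly above $\Pi^0_2$ (e.g.\ $\Sigma^0_3$-hard or $\Pi^0_3$-hard). The $\oequiv_T$-class, $\oequiv_1$-class, and $\oequiv_m$-class of a computable set such as $\emptyset$ (or of $\NN$) consist of all computable c.e.\ sets, and standard computability theory shows this index set is $\Sigma^0_3$-complete; more simply, one may invoke the fact (e.g.\ from \cite{soare}) that $\{e : W_e \equiv_T \emptyset'\}$, or the index set of any fixed noncomputable c.e.\ degree, already has complexity properly above $\Pi^0_2$, since deciding membership requires quantifying over all reductions in both directions. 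Hence no reduction from these relations to $=^{ce}$ can exist.

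For the forward direction, the key observation is that all three reducibilities $\leq_T$, $\leq_1$, $\leq_m$ refine equality, so distinct c.e.\ sets can be forced into distinct degree-classes if we first push them apart enough to make them degree-incomparable or at least inequivalent. The cleanest route is to build, from an index $e$, an index $h(e)$ for a c.e.\ set $W_{h(e)}$ that "codes" $W_e$ into its $\equiv$-degree so that $W_e = W_{e'} \iff W_{h(e)} \equiv W_{h(e')}$. A natural way: arrange that the join of $W_{h(e)}$ with a fixed oracle recovers $W_e$ column by column, using the recursion-theoretic fact that one can uniformly produce, from an index $e$, an index for a c.e.\ set whose Turing (hence $m$-, $1$-) degree determines $W_e$ up to equality. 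Concretely I would take $W_{h(e)}$ to enumerate, in the $n$th column, a marker iff $n \in W_e$, together with enough "padding" (e.g.\ a fixed noncomputable pattern shifted by $n$) so that the degree of the whole set sees exactly which columns are active; then $W_e = W_{e'}$ iff the two coded sets have the same active columns iff they are Turing/1/m equivalent. Because $1$-reducibility implies $m$- implies $T$-, one designs the coding so that sameness of $W_e$ gives a \emph{computable} bijection between the two coded sets (yielding $\oequiv_1^{ce}$, and automatically $\oequiv_m^{ce}$, $\oequiv_T^{ce}$), while difference of $W_e$ destroys even Turing equivalence.

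I expect the main obstacle to be the forward reduction into $\oequiv_T^{ce}$: unlike $=^{ce}$ or $E_0^{ce}$, Turing equivalence is coarse, so one must ensure that a \emph{difference} between $W_e$ and $W_{e'}$ — possibly just a single element appearing late in one set — translates into a genuine difference of Turing degree of the coded sets, which cannot be "healed" by the rest of the enumeration. The fix is to make each potential element $n \in W_e$ control a large, robust block of the coded set (for instance, force $W_{h(e)}$ to be infinite exactly on the columns indexed by $W_e$, padding inactive columns to be finite), so that the \emph{set of infinite columns} is a Turing-invariant of $W_{h(e)}$ equal to $W_e$ itself; symmetry of the construction in $e$ then gives both directions of the biconditional, and the uniformity of the recursion theorem keeps $h$ computable. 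The $1$- and $m$-cases are then easier, since one gets the required explicit bijection for free from the uniform coding.
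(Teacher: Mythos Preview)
Your complexity argument for the non-reductions is correct and is exactly what the paper does: each of $\oequiv_T^{ce}$, $\oequiv_1^{ce}$, $\oequiv_m^{ce}$ is $\Sigma^0_3$-complete (indeed the paper cites \cite[Corollary~IV.3.6]{soare}), while $=^{ce}$ is only $\Pi^0_2$, so no reduction can exist in that direction.

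The forward reduction, however, has a genuine gap. You propose to code $W_e$ into $W_{h(e)}$ so that ``the set of infinite columns is a Turing-invariant of $W_{h(e)}$ equal to $W_e$ itself.'' But the set of infinite columns is \emph{not} a Turing invariant: two Turing-equivalent c.e.\ sets can have entirely different column patterns (e.g.\ $\{0\}\times\NN$ and $\{1\}\times\NN$ are both computable). More concretely, any construction that places a \emph{fixed} noncomputable pattern $A$ (possibly shifted) into the columns indexed by $W_e$ will produce a set whose Turing degree depends only on $A$ and the Turing degree of $W_e$, not on $W_e$ itself; so whenever $W_e\neq W_{e'}$ but $W_e\equiv_T W_{e'}$, your $W_{h(e)}$ and $W_{h(e')}$ will be Turing equivalent and the reduction fails. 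The same objection applies to the ``infinite-versus-finite columns'' variant: knowing which columns of $W_{h(e)}$ are infinite requires $\emptyset''$, and in any case this information is not preserved by passing to a $\equiv_T$-equivalent set.

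What is missing is a mechanism that makes the presence or absence of each individual $n\in W_e$ visible \emph{in the Turing degree} of the coded set. The paper supplies this with a strong form of Friedberg--Muchnik: a uniformly c.e.\ sequence $(A_i)$ with $A_i\not\leq_T\bigoplus_{j\neq i}A_j$ for every $i$. One then lets $W_{f(e)}$ have $A_{n_k}$ in its $k$th column, where $n_k$ is the $k$th element enumerated into $W_e$. If $W_e=W_{e'}$ the two coded sets differ by a computable column permutation, giving $\equiv_1$ (hence $\equiv_m$ and $\equiv_T$). If $i\in W_e\smallsetminus W_{e'}$, then $A_i\leq_T W_{f(e)}$ while $W_{f(e')}\leq_T\bigoplus_{j\neq i}A_j$, so $W_{f(e)}\not\leq_T W_{f(e')}$ by the independence property. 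The mutual independence of the $A_i$ is the essential idea your sketch lacks; without it there is no way to force a single-element difference in $W_e$ to produce a difference in Turing degree.
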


\begin{proof}
  We will define a function which reduces $=^{ce}$ to all three
  relations at once.  To begin, we use a strong form of the
  Friedberg-Muchnik theorem to obtain a c.e.\ sequence of sets $A_i$
  with the property that for all $i$, we have
  $A_i\not\leq_T\bigoplus_{j\neq i}A_j$ (and also $\not\leq_1$ and
  $\not\leq_m$).  Then, we let $W_{f(e)}$ be the subset of
  $\NN\times\NN$ whose $k\th$ column is $A_{n_k}$, where $n_k$ is the
  $k\th$ element enumerated into $W_e$ by $e$.

  We first show that if $W_e=W_{e'}$ then $W_{f(e)}\equiv_1 W_{f(e')}$
  (and hence $\oequiv_m$ and $\oequiv_T$).  Assuming first that $W_e$
  is infinite, $W_{f(e)}$ can be obtained from $W_{f(e')}$ by the
  following permutation of $\NN\times\NN$: If the $k\th$ element to
  appear in $W_{e'}$ is the $r\th$ element to appear in $W_e$, then
  send the $k\th$ column to the $r\th$ column.  In the case that $W_e$
  is finite, $W_{f(e)}$ can be obtained by $W_{f(e')}$ by a finite
  permutation of the columns.

  We now show the converse: that if $W_e\neq W_{e'}$ then
  $W_{f(e)}\not\equiv_TW_{f(e')}$ (and $\not\equiv_1$ and
  $\not\equiv_m$).  For this, we can suppose that $i\in
  W_e\smallsetminus W_{e'}$.  Now, suppose towards a contradiction
  that $W_{f(e)}\leq_TW_{f(e')}$.  Then we have
  \[A_i\;\leq_T\;W_{f(e)}\;\leq_T\;W_{f(e')}\;\leq_T\;\bigoplus_{j\neq i}A_i\;,
  \]
  the last reduction holding using arguments similar to the previous
  paragraph.  But this contradicts our choice of the $A_i$.  As noted,
  this argument also works for $\leq_1$ and $\leq_m$.

  Finally, none of the three degree relations are reducible to
  $=^{ce}$ because they are each $\Sigma^0_3$ complete (see for
  instance \cite[Corollary~IV.3.6]{soare}) while $=^{ce}$ is just
  $\Pi^0_2$.
\end{proof}

Note that in defining the set $W_{f(e)}$ we cannot simply use the sum
$\bigoplus_{j\in W_e}A_j$, since here a complicated set is coded into
the indices of summation.

\begin{thm}
  $\oequiv_m^{ce}$ is computably reducible to $\oequiv_1^{ce}$.
\end{thm}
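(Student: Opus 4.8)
The plan is to use \emph{cylindrification}. For a c.e.\ set $W_e$, let $f(e)$ be (an index, obtained uniformly and computably from $e$, for) a program enumerating the cylinder $\mathrm{cyl}(W_e)=\set{\la n,k\ra\mid n\in W_e}$; this is a routine dovetailing, and $W_{f(e)}$ is c.e.\ whenever $W_e$ is. I claim $f$ witnesses $\oequiv_m^{ce}\leq\oequiv_1^{ce}$, that is, $W_e\equiv_m W_{e'}$ if and only if $W_{f(e)}\equiv_1 W_{f(e')}$.

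For the forward direction, suppose $W_e\equiv_m W_{e'}$ and let $g$ be a computable function with $n\in W_e\iff g(n)\in W_{e'}$. Then $\la n,k\ra\mapsto\la g(n),\la n,k\ra\ra$ is a \emph{one-one} computable reduction from $\mathrm{cyl}(W_e)$ to $\mathrm{cyl}(W_{e'})$: it is injective because $\la n,k\ra$ is recorded in the second coordinate of the image, and it is a reduction because $\la n,k\ra\in\mathrm{cyl}(W_e)\iff n\in W_e\iff g(n)\in W_{e'}\iff\la g(n),\la n,k\ra\ra\in\mathrm{cyl}(W_{e'})$. Applying the same construction to a reduction witnessing $W_{e'}\leq_m W_e$ gives $W_{f(e)}\equiv_1 W_{f(e')}$. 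For the reverse direction, note $W_e\leq_m\mathrm{cyl}(W_e)$ via $n\mapsto\la n,0\ra$ and $\mathrm{cyl}(W_{e'})\leq_m W_{e'}$ via $\la m,j\ra\mapsto m$; so if $W_{f(e)}\equiv_1 W_{f(e')}$, then in particular $\mathrm{cyl}(W_e)\leq_m\mathrm{cyl}(W_{e'})$ (since $1$-reducibility implies $m$-reducibility), and composing the three $m$-reductions yields $W_e\leq_m W_{e'}$; symmetrically $W_{e'}\leq_m W_e$, hence $W_e\equiv_m W_{e'}$.

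There is no real obstacle here: the construction is elementary, and the only thing to be careful about is choosing the pairing in the forward map so that injectivity is automatic (carrying along the original argument $\la n,k\ra$ in the image accomplishes this). One could alternatively phrase the argument in terms of the classical facts that $\mathrm{cyl}(A)$ is a cylinder, that $A\equiv_m\mathrm{cyl}(A)$, and that $\leq_m$ and $\leq_1$ coincide on cylinders (see \cite{soare}), but the direct construction above already does everything needed and, pleasingly, only requires the \emph{existence} of the $m$-reductions rather than their uniform computability in $e,e'$.
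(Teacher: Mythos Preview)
Your proof is correct and essentially identical to the paper's own argument: the paper also sets $W_{f(e)}=W_e\times\NN$ (your cylinder), uses the same injective map $(m,n)\mapsto(\phi(m),\la m,n\ra)$ for the forward direction, and for the converse takes $m\mapsto$ the first coordinate of $\psi(m,0)$, which is exactly the composition of your three $m$-reductions.
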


\begin{proof}
  For this we simply let $W_{f(e)}=W_e\times\NN$.  If
  $\phi\from\NN\into\NN$ is a many-one reduction from $W_e$ to
  $W_{e'}$, then the map $(m,n)\mapsto(\phi(m),\langle m,n\rangle)$ is
  a one-one reduction from $W_{f(e)}$ to $W_{f(e')}$.  Conversely, if
  $\psi\from\NN\times\NN\into\NN\times\NN$ is a one-one reduction from
  $W_{f(e)}$ to $W_{f(e')}$, then the map $m\mapsto$ the first
  coordinate of $\psi(m,0)$ is a many-one reduction from $W_e$ to
  $W_{e'}$.
\end{proof}

The next result clarifies how the degree-theoretic relations fit in
with other relations previously considered.  This and earlier results
are depicted in Figure~\ref{fig:turing}.

\begin{prop}
  $\oequiv_1^{ce}$ is computably reducible to the computable
  isomorphism relation $\ociso_\sbin^{ce}$ on c.e.\ binary relations.
\end{prop}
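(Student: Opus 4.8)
The plan is to encode each c.e.\ set as a c.e.\ binary relation in such a way that a \emph{computable} isomorphism of the coded relations corresponds precisely to a computable permutation of $\NN$ carrying one set onto the other, and then to appeal to Myhill's isomorphism theorem. This is just the standard trick of coding a unary predicate by a binary relation via loops.

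Concretely, I would define the reduction $f$ by letting $W_{f(e)}$ be the binary relation with a single ``loop'' at each point of $W_e$; that is, $W_{f(e)}=\set{\langle x,x\rangle\mid x\in W_e}$. Since $W_e$ is enumerated uniformly in $e$, so is $W_{f(e)}$, hence $f$ is computable. The first step is the observation that, for a bijection $\pi\from\NN\into\NN$, being an isomorphism from the binary relation $W_{f(e)}$ to the binary relation $W_{f(e')}$ amounts to the requirement that for all $x,y$ one has $x=y\in W_e$ iff $\pi(x)=\pi(y)\in W_{e'}$; since $\pi$ is injective this simplifies to $\pi(W_e)=W_{e'}$. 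Consequently $W_{f(e)}\ociso_\sbin^{ce}W_{f(e')}$ holds exactly when there is a computable permutation $\pi$ of $\NN$ with $\pi(W_e)=W_{e'}$.

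It then remains to verify that this last condition is equivalent to $W_e\equiv_1 W_{e'}$. One direction is immediate: a computable permutation $\pi$ with $\pi(W_e)=W_{e'}$ yields the $1$-reductions $\pi$ and $\pi^{-1}$ witnessing $W_e\equiv_1W_{e'}$. For the other direction I would appeal to Myhill's isomorphism theorem (see \cite{soare}): if $W_e\equiv_1 W_{e'}$, then the two sets are recursively isomorphic, i.e.\ there is a computable permutation of $\NN$ carrying $W_e$ onto $W_{e'}$, which by the previous paragraph is a computable isomorphism of $W_{f(e)}$ with $W_{f(e')}$. Putting the two directions together gives $e\mathrel{\oequiv_1^{ce}}e'\iff W_{f(e)}\ociso_\sbin^{ce}W_{f(e')}$, as required.

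I do not expect a genuine obstacle here: the only nontrivial input is Myhill's theorem, and the rest is bookkeeping. The one point that warrants a little care is the degenerate cases $W_e=\emptyset$ and $W_e=\NN$, where $W_{f(e)}$ is the empty relation and the full diagonal respectively; one should just check that in each case $W_{f(e)}$ is computably isomorphic to $W_{f(e')}$ exactly when $W_{e'}$ is likewise empty (resp.\ all of $\NN$), which matches the fact that $\emptyset$ (resp.\ $\NN$) is $1$-equivalent only to itself.
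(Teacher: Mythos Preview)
Your proof is correct and follows essentially the same approach as the paper: both arguments rest on Myhill's theorem (which the paper invokes implicitly by saying that $\oequiv_1^{ce}$ \emph{is} computable isomorphism of c.e.\ unary relations) together with a routine encoding of a unary predicate as a binary relation. The only cosmetic difference is the choice of encoding---you use self-loops $\{\langle x,x\rangle: x\in W_e\}$, while the paper uses a star graph with a distinguished vertex pointing to the elements of $W_e$---and your extra attention to the degenerate cases $W_e=\emptyset$ and $W_e=\NN$ is harmless but unnecessary, since the general argument already covers them.
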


\begin{proof}
  The point is that $\oequiv_1^{ce}$ is precisely the computable
  isomorphism relation on the set of c.e.\ \emph{unary} relations.  So
  for instance $\oequiv_1^{ce}\leq\ociso_\sbin^{ce}$ via the
  map such that $W_{f(e)}$ codes the graph on $\NN\cup\{\star\}$ where
  $\star\rightarrow n$ if and only if $n\in W_e$.
\end{proof}

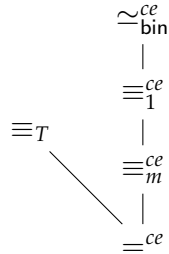
\begin{figure}[ht]
\begin{tikzpicture}
  \node at (0,0) (eq) {$=^{ce}$};
  \node at (0,1) (m) {$\oequiv_m^{ce}$} edge (eq);
  \node at (0,2) (1) {$\oequiv_1^{ce}$} edge (m);
  \node at (0,3) (cbin) {$\ociso_\sbin^{ce}$} edge (1);
  \node at (-1.5,1.5) (T) {$\oequiv_T$} edge (eq);
\end{tikzpicture}
\caption{Relationships between the degree-theoretic equivalence
  relations and some others considered in earlier
  sections.\label{fig:turing}}
\end{figure}

We close this section with a generalization of the reducibility
hierarchy on c.e.\ sets.  Here, we shall consider equivalence
relations on the \emph{d.c.e.}\ and even $n$-c.e.\ sets.  This is
natural given what we have done, because like the c.e.\ sets, the
$n$-c.e.\ sets are also characterized by natural number indices.

\begin{defn}\
  \begin{itemize}
  \item If $\bm{e}=\seq{e_1,\ldots,e_n}$ is a sequence of indices then
    the corresponding $n$-c.e.\ subset of $\NN$ is the set
    $W_{\bm{e}}=(W_{e_1}\smallsetminus W_{e_2})\cup
    (W_{e_3}\smallsetminus W_{e_4})\cup\cdots\smallsetminus\cup
    W_{e_n}$.  Here, the last operation is either $\smallsetminus$ or
    $\cup$ depending on whether $n$ is even or odd.
  \item If $E$ is an equivalence relation on $n$-c.e.\ sets, then
    $E^{n\text{-}ce}$ is the relation on $\NN^n$ defined by
    $\bm{e}\mathrel{E}^{n\text{-}ce}\bm{f}$ if and only if
    $W_{\bm{e}}\mathrel{E}W_{\bm{f}}$.
  \end{itemize}
\end{defn}

The $1$-c.e.\ sets are of course the c.e.\ sets, and the $2$-c.e.\
sets are sometimes called the d.c.e.\ sets (difference of c.e.\
sets). Thus we shall write $E^{dce}$ for $E^{2\text{-}ce}$.  It is
trivial to check that for all $n$ we always have $E^{n\text{-}ce}\leq
E^{n+1\text{-}ce}$: fix an $e$ with $W_e=\emptyset$, and let
$(e_1,\ldots,e_n)\mapsto (e_1,\ldots,e_n,e)$.

%

\begin{thm}
  \label{thm:nce}
  For every $n>0$, $=^{n\text{-}ce}$ lies properly below
  $=^{(n+1)\text{-}ce}$.
\end{thm}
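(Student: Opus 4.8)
The reduction $=^{n\text{-}ce}\leq\,=^{(n+1)\text{-}ce}$ is exactly the trivial padding map recorded just above, so the content of the theorem is the nonreducibility $=^{(n+1)\text{-}ce}\not\leq\,=^{n\text{-}ce}$, which I would prove by contradiction. Suppose $f\from\NN^{n+1}\into\NN^n$ is a computable reduction. Since $f$ is a reduction, any two $(n+1)$-tuples coding the same $(n+1)$-c.e.\ set have $f$-images coding the same $n$-c.e.\ set, so $f$ induces a well-defined, injective map $\hat f$ from the $(n+1)$-c.e.\ sets to the $n$-c.e.\ sets which is computable on the indices; restricted to the (trivially $(n+1)$-c.e.) \emph{finite} sets, $\hat f$ is well-defined on c.e.\ sets, hence by the Monotonicity Lemma~\ref{lem:monotonicity} it is $\subseteq$-preserving there. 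I would also normalize so that every $m$-c.e.\ index tuple $\bm d$ that gets produced presents its component c.e.\ sets in nested decreasing order (a standard, computable, $=^{m\text{-}ce}$-respecting normalization); then ``the stage-$s$ approximation of $W_{\bm d}$ changes value at $x$'' is literally the event that one more component crosses $x$, and an $m$-c.e.\ set is precisely one whose approximation changes value at most $m$ times at each point.

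The heart is a pursuit run through the Recursion Theorem that exploits the one extra mind-change available to an $(n+1)$-c.e.\ set. Using the Recursion Theorem I would build an index tuple $\bm d$ for an $(n+1)$-c.e.\ set $D$ which knows $\bm d$, hence $f(\bm d)$, and I would keep a reservoir of fresh ``work points''. Fixing a work point $a$, I repeatedly toggle $a$ in and out, moving $D$ between the values $\emptyset$ and $\{a\}$; by injectivity $\hat f(\emptyset)\neq\hat f(\{a\})$, and by $\subseteq$-monotonicity on finite sets $\hat f(\emptyset)\subsetneq\hat f(\{a\})$, so there is $w^\ast\in\hat f(\{a\})\smallsetminus\hat f(\emptyset)$. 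The $n+1$ toggles are timed so that each is performed only after the approximation to $W_{f(\bm d)}$ has --- observably, using the current best guess for $w^\ast$ --- caught up to the limit forced by the present value of $D$. Each toggle is a single state-change on a single point, so $D$ stays $(n+1)$-c.e.\ as long as at most $n+1$ toggles are made; but each toggle is triggered by, and then in turn forces, a further state-change of the approximation to $W_{f(\bm d)}$ \emph{at the point $w^\ast$}, so after the $(n+1)$st toggle $W_{f(\bm d)}$ has changed value $n+1$ times at the single point $w^\ast$, contradicting that it is $n$-c.e. If on the other hand some awaited catch-up never occurs, then $D$ stabilizes at some fixed $(n+1)$-c.e.\ set while $W_{f(\bm d)}$ never reaches the limit $\hat f$ assigns to it, again contradicting that $f$ is a reduction.

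I expect the main obstacle to be exactly that $W_{f(\bm d)}$ is only limit-computable with a mind-change bound, so ``$W_{f(\bm d)}$ has caught up to $\hat f(T)$'' is not a $\Sigma^0_1$ event and cannot simply be waited for: the pursuit must be implemented as a finite-injury-style construction with a guessed value of $w^\ast$ that is revised (and a fresh work point then taken, which the $(n+1)$-c.e.\ bound tolerates, since each work point is toggled at most $n+1$ times) whenever the current guess turns out not to separate $\hat f(\emptyset)$ from $\hat f(\{a\})$. One must then verify that this revision process halts --- or, if it runs forever, that the infinitary outcome itself forces a contradiction with $f$ being a reduction (via what the sequence of observed witnesses must belong to in the limit). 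A secondary point to get right is the choice to alternate a \emph{single} work point between just two sets, rather than walk $D$ through a chain of distinct sets: this is what concentrates the forced changes of $W_{f(\bm d)}$ at one point, and it is essential because a $\subseteq$-monotone chain of configurations would only force one state-change in the image and could not be contradicted --- which is also why a direct application of the Monotonicity Lemma alone does not suffice here.
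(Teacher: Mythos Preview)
Your core idea---build an $(n+1)$-c.e.\ set via the Recursion Theorem that toggles a single point while watching the image under the putative reduction---is exactly the paper's idea, but two things in your execution need repair.

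First, your appeal to the Monotonicity Lemma is illegitimate. That lemma is stated and proved for computable functions whose \emph{outputs} are c.e.\ indices: the proof waits for a specific $x$ to appear in $W_{f(p)}$, which is a $\Sigma^0_1$ event. For outputs that are $n$-c.e.\ indices, monotonicity can simply fail---e.g., the computable, well-defined map sending a c.e.\ set $A$ to the d.c.e.\ set $\{0\}\smallsetminus(A\cap\{0\})$ is antitone. Fortunately you do not need monotonicity at all: injectivity of $\hat f$ already gives $\hat f(\emptyset)\neq\hat f(\{0\})$, so just pick a witness $m$ in the \emph{symmetric} difference and handle the two directions separately (the paper does precisely this).

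Second, and more importantly, the ``main obstacle'' you flag---that catching up is not $\Sigma^0_1$, forcing a finite-injury construction with guessed witnesses and fresh work points---is entirely avoidable, and your proposal never actually closes the gap it opens (you end by saying ``one must then verify that this revision process halts---or, if it runs forever, that the infinitary outcome itself forces a contradiction,'' which is not a proof). The paper's trick is to fix the witness \emph{non-uniformly, in advance}: choose specific indices $\bm e,\bm f$ with $W_{\bm e}=\{0\}$, $W_{\bm f}=\emptyset$, and pick any $m\in W_{g(\bm e)}\mathbin{\triangle} W_{g(\bm f)}$ before the Recursion-Theorem construction begins. Then build $W_{\bm i}$ so that $0\in W_{\bm i,s+1}$ is toggled exactly when the status of $m$ in the stage-$s$ approximation of the $n$-c.e.\ set $W_{g(\bm i)}$ changes. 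That status is computable at each stage and changes at most $n$ times, so $W_{\bm i}$ is $(n+1)$-c.e.; and in the limit, whichever of $\emptyset$ or $\{0\}$ the set $W_{\bm i}$ equals, $W_{g(\bm i)}$ disagrees at $m$ with the corresponding $W_{g(\bm f)}$ or $W_{g(\bm e)}$. No guessing, no injury, no fresh work points. Your pursuit-style description becomes exactly this once you realize the witness need not be discovered dynamically.
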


\begin{proof}
  We claim that there is no computable reduction from
  $=^{(n+1)\text{-}ce}$ to $=^{n\text{-}ce}$.  Suppose that $g$ were
  such a reduction.  Fix some $\bm{e}$ and $\bm{f}$ of length $(n+1)$
  with $W_{\bm{e}}=\{0\}$ and $W_{\bm{f}}=\emptyset$.  By assumption,
  $W_{g(\bm{e})}\neq W_{g(\bm{f})}$, and we suppose first that there
  exists some number $m\in W_{g(\bm{e})}\smallsetminus W_{g(\bm{f})}$.
  Of course, the $n$-c.e.\ sets $W_{g(\bm{e})}$ and $W_{g(\bm{f})}$
  can be approximated using the indices $g(\bm{e})$ and $g(\bm{f})$.
  We will use the Recursion Theorem to define an $(n+1)$-c.e.\ set
  $W_{\bm{i}}$ which ``knows its own indices $\bm{i}$'' and is
  approximated as follows.  We will use the notation $W_{\bm{i},s}$
  denote the approximation to $W_{\bm{i}}$ at stage $s$.

  We start with $W_{\bm{i},0}=\emptyset$ and $W_{\bm{i},1}=\{0\}$.  At
  the first stage $s_0$ (if any) with $m\in W_{g(\bm{i}),s_0}$, we
  take $0$ out, leaving $W_{\bm{i},s_0+1}=\emptyset$.  Then we do
  nothing further until the next stage $s_1>s_0$ at which $m\notin
  W_{g(\bm{i}),s_1}$.  At stage $s_1+1$, we enumerate $0$ back into
  $W_{\bm{i}}$, leaving $W_{\bm{i},t+1}=\{0\}$.  Then, if we encounter
  another stage $s_2>s_1$ at which $m$ enters $W_{g(\bm{i})}$ again,
  we take $0$ back out of $W_{\bm{i}}$, and so on, back and forth, as
  many times as $m$ enters or leaves the set $W_{g(\bm{i})}$.  This
  happens at most $n$ times, so $W_{\bm{i}}$ is $(n+1)$-c.e.\
  (including the initial enumeration of $0$ into $W_{\bm{i},1}$).
  However, our construction leaves $W_{\bm{i}}=\emptyset=W_{\bm{f}}$
  if and only if $m\in W_{g(\bm{i})}$, in which case
  $W_{g(\bm{i})}\neq W_{g(\bm{f})}$; whereas, if $m\notin
  W_{g(\bm{i})}$, then $W_{\bm{i}}=\{0\}=W_{\bm{e}}$, yet
  $W_{g(\bm{i})}\neq W_{g(\bm{e})}$, since $m\in W_{g(\bm{e})}$.

  If there is no $m$ in $W_{g(\bm{e})}\smallsetminus W_{g(\bm{f})}$,
  then there must be some number $m'\in W_{g(\bm{f})}\smallsetminus
  W_{g(\bm{e})}$, since $W_{\bm{e}}\neq W_{\bm{f}}$ and $g$ is assumed
  to be a reduction.  In this case, we execute the same construction,
  except that we leave $W_{\bm{i},s}=\emptyset$ until reaching a stage
  $s_0$ with $m'\in W_{g(\bm{i}),s_0}$, then enumerate $0$ into
  $W_{\bm{i}}$, then wait for $m'$ to leave $W_{g(\bm{i})}$, then
  remove $0$ from $W_{\bm{i}}$, and so on.  In this case, $W_{\bm{i}}$
  is actually just $n$-c.e., not $(n+1)$-c.e., and again the strategy
  works: if $m'\in W_{g(\bm{i})}$, then $W_{\bm{i}}=\{0\}=W_{\bm{e}}$,
  yet $m'\notin W_{g(\bm{e})}$; whereas, if $m'\notin W_{g(\bm{i})}$,
  then $W_{\bm{i}}=\emptyset=W_{\bm{f}}$, yet $m'\in W_{g(\bm{f})}$.
  So $g$ cannot have been a computable reduction.
\end{proof}

We can define an equivalence relation above all of these.  Let
$\bm{h}:\omega\to\omega^{<\omega}$ be a computable bijection, and define
\[i \equiv^{<\omega\text{-}ce} j \iff W_{\bm{h}(i)}=W_{\bm{h}(j)}\;.
\]
Here, if $\bm{h}(i)\in\omega^{<\omega}$ has length $n$, then
$W_{\bm{h}(i)}$ is exactly the $n$-c.e.\ set defined above.  So this
relation $=^{<\omega\text{-}ce}$ is really just an amalgam of all the
relations $=^{n\text{-}ce}$.  It is clear that
$=^{n\text{-}ce}~\leq~=^{<\omega\text{-}ce}$ for every $n$, and
Theorem \ref{thm:nce} then shows that
$=^{<\omega\text{-}ce}~\not\leq~=^{n\text{-}ce}$.

Many of the results in this paper concerning equivalence relations on
c.e.\ sets have analogues for the $n$-c.e.\ sets.  However, it is also
interesting to consider how relations on $n$-c.e.\ sets fit in with
the relations on the c.e.\ sets.

\begin{thm}
  The relation $=^{<\omega\text{-}ce}$ is computably reducible to
  $E_3^{ce}$, but no reduction exists in the opposite direction.
\end{thm}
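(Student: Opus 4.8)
The plan is to obtain the positive direction by a direct uniform construction and the negative direction by an arithmetic complexity argument built on Theorem~\ref{thm:e3}.

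For $=^{<\omega\text{-}ce}\leq E_3^{ce}$: on input $i$, compute $\bm{h}(i)=(e_1,\ldots,e_n)$ and form the uniformly computable stage-$s$ approximations $A_s=W_{\bm{h}(i),s}=(W_{e_1,s}\smallsetminus W_{e_2,s})\cup(W_{e_3,s}\smallsetminus W_{e_4,s})\cup\cdots$ to the $n$-c.e.\ set $W_{\bm{h}(i)}$. Since $A_s$ is a fixed Boolean combination of the sets $W_{e_k,s}$, each of which is monotone in $s$, for every $x$ the truth value of ``$x\in A_s$'' changes at most $n$ times as $s$ increases and hence stabilizes, its eventual value being the truth value of ``$x\in W_{\bm{h}(i)}$''. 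Let $g$ be a computable function such that $\langle x,s\rangle\in W_{g(i)}\iff x\in A_s$; such $g$ exists because the set on the right is uniformly computable in $i$. Then the $x$-th column of $W_{g(i)}$, viewed as a subset of $\NN$, equals $\{s:x\in A_s\}$, which contains a tail of $\NN$ (hence is cofinite) when $x\in W_{\bm{h}(i)}$ and is bounded (hence finite) when $x\notin W_{\bm{h}(i)}$. Since a cofinite set and a finite set are never $E_0$-equivalent whereas any two cofinite sets, and any two finite sets, are, the $x$-th columns of $W_{g(i)}$ and $W_{g(j)}$ are $E_0$-equivalent exactly when $x\in W_{\bm{h}(i)}\iff x\in W_{\bm{h}(j)}$. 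Quantifying over $x$ yields $W_{g(i)}\mathrel{E_3^{ce}}W_{g(j)}\iff W_{\bm{h}(i)}=W_{\bm{h}(j)}\iff i\equiv^{<\omega\text{-}ce}j$, so $g$ is the desired reduction.

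For the non-reduction, the key point is that $=^{<\omega\text{-}ce}$ is $\Pi^0_2$: given $i,j$, the approximations $W_{\bm{h}(i),s}$ and $W_{\bm{h}(j),s}$ each stabilize pointwise, so $W_{\bm{h}(i)}\neq W_{\bm{h}(j)}$ if and only if there exist $x$ and $s$ such that for all $t\geq s$ exactly one of ``$x\in W_{\bm{h}(i),t}$'', ``$x\in W_{\bm{h}(j),t}$'' holds, which is a $\Sigma^0_2$ condition; in particular every $=^{<\omega\text{-}ce}$-class is $\Pi^0_2$. Suppose toward a contradiction that $g$ is a computable reduction from $E_3^{ce}$ to $=^{<\omega\text{-}ce}$. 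By the proof of Theorem~\ref{thm:e3}, $E_3^{ce}$ has an equivalence class $F$ that is $\Pi^0_3$-complete. Picking any $c\in F$, we would have $F=g^{-1}(D)$, where $D$ is the $=^{<\omega\text{-}ce}$-class of $g(c)$; hence $F$ is the preimage of a $\Pi^0_2$ set under a total computable function, so $F\in\Pi^0_2$. This contradicts $\Pi^0_2\neq\Pi^0_3$, so no such $g$ exists.

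I expect no serious obstacle; the only points demanding care are that the construction in the first part is genuinely uniform in $i$ (recall that the length $n$ of $\bm{h}(i)$ itself varies with $i$) and that each column lands in the clean dichotomy cofinite/finite rather than merely infinite/co-infinite, since it is this stronger dichotomy that makes the $E_0$ comparison work. The substantive content of the non-reduction is already carried by Theorem~\ref{thm:e3}.
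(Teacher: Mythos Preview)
Your proposal is correct and follows essentially the same approach as the paper. The only cosmetic difference is in the encoding of the reduction: the paper enumerates $\{0,\ldots,s-1\}$ into column $k$ whenever $k\in W_{\bm{h}(i),s}$ (so columns are either all of $\NN$ or finite), whereas you enumerate $s$ into column $x$ whenever $x\in A_s$ (so columns are cofinite or finite); both yield the same $E_0$-dichotomy. For the non-reduction, your phrasing via the $\Pi^0_3$-complete equivalence class of $E_3^{ce}$ is in fact slightly cleaner than the paper's, which asserts that $E_3^{ce}$ itself is ``$\Pi^0_3$-complete'' even though earlier it notes the relation is $\Pi^0_4$-complete; what is actually used in both arguments is $\Pi^0_3$-hardness, which follows from Theorem~\ref{thm:e3} exactly as you indicate.
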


\begin{proof}
  For the reduction, let $\bm{e}=\seq{e_1,\ldots,e_n}$ be an index for
  an $n$-c.e.\ set.  We define a program $f((e_i))$ enumerating a
  subset of $\NN\times\NN$ as follows.  Begin by simulating the
  programs $e_1,\ldots,e_n$.  If $k$ appears in $W_{\bm{e},s}$, we
  write the first $s$ elements of the $k\th$ column into
  $W_{f(\bm{e})}$.  Note that as $s$ increases, the status of $k\in
  W_{\bm{e},s}$ can only change $n$ times, and therefore $k\in
  W_{\bm{e}}$ if and only if the $k\th$ column of $W_{f(\bm{e})}$ is
  infinite.  It follows easily that $f$ is a reduction to $E_3^{ce}$.

  In the opposite direction, we note that the relation $E_3^{ce}$ is
  $\Pi^0_3$-complete.  However, $W_{\bm{e}}=W_{\bm{f}}$ iff, for every
  $x$, we have $x\in W_{\bm{e}}$ if and only if $x\in W_{\bm{f}}$.
  Since membership in each of $W_{\bm{e}}$ and $W_{\bm{f}}$ is
  $\Delta^0_2$, the relation $=^{<\omega-ce}$ is $\Pi^0_2$, precluding
  any computable reduction.
\end{proof}

\begin{figure}[ht]
\begin{tikzpicture}
  \node at (0,0) (eq) {$=^{ce}$};
  \node at (0,1) (eqd) {$=^{dce}$} edge (eq);
  \node at (0,2) (eqn) {$=^{n\text{-}ce}$};
  \node at (0,3) (eqo) {$=^{<\omega\text{-}ce}$};
  \node at (0,4) (e3) {$E_3^{ce}$};
  \draw[-] (e3) -- (eqo); 
  \draw[-] (eqo) -- (eqn); 
  \draw[-] (eqn) -- (eqd); 
  \node at (-1.5,2) (e0) {$E_0^{ce}$} edge (eq);
  \draw[-] (e3) -- (e0);
\end{tikzpicture}
\caption{Known relationships between the relations on $n$-c.e.\
  sets.\label{fig:nce}}
\end{figure}
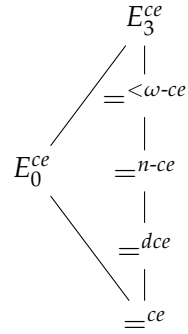

These relationships are shown in Figure~\ref{fig:nce}.  It would also
be interesting to decide the relationship between $=^{n\text{-}ce}$
and $E_0^{ce}$.  Since $E_0^{ce}$ is is $\Sigma^0_3$ complete while
$=^{n\text{-}ce}$ is just $\Pi^0_2$, we can conclude that $E_0^{ce}$
is not computably reducible to $=^{n\text{-}ce}$.  This leaves open
the following question.

\begin{question}
  Are any of the $=^{n\text{-}ce}$ computably reducible to $E_0^{ce}$?
\end{question}

\bibliographystyle{alpha}
\begin{singlespace}
  \bibliography{compequiv}
\end{singlespace}

\end{document}